\documentclass[reqno,11pt]{amsart}
\usepackage{bbm}
\usepackage{url}
\usepackage{soul}
\usepackage{epstopdf}
\usepackage[ruled]{algorithm}
\usepackage[a4paper,bindingoffset=0.5cm,left=2cm,right=2cm,top=2.5cm,bottom=2cm,footskip=.8cm]{geometry}

\usepackage{caption}
\usepackage{pgfplots}
\usepackage{grffile}
\newlength\figureheight
\newlength\figurewidth
\setlength{\parindent}{0mm}
\setlength{\parskip}{2mm}

\setlength\figureheight{3.5cm} \setlength\figurewidth{5cm}
\pgfplotsset{%
        tick label style={font=\scriptsize},
        label style={font=\footnotesize},
        legend style={font=\footnotesize},
        every axis plot/.append style={very thick}
}

 \renewcommand{\leq}{\leqslant}
\renewcommand{\geq}{\geqslant}

\allowdisplaybreaks

\makeatletter
\renewcommand{\fnum@figure}[1]{\textbf{\figurename~\thefigure}. }
\renewcommand{\fnum@table}[1]{\textbf{\tablename~\thetable}. }
\makeatother

\usepackage[utf8]{inputenc}
\usepackage{graphicx}
\usepackage{subfig}
\usepackage{type1cm}        
\usepackage{multicol}        
\usepackage[bottom]{footmisc}

\usepackage{newtxtext}       %
\usepackage{newtxmath}       
\usepackage{algorithm, algpseudocode, amsmath}

\newcommand{\vb}{\vspace{3.2mm}}

\usepackage[british,UKenglish,USenglish,english,american,dutch]{babel} 
\usepackage{mathrsfs}
\usepackage{lipsum}
\usepackage{verbatim}
\usepackage{amsmath, amsfonts, amsthm,mathtools}
\usepackage{syntonly}
\usepackage{float}
\usepackage{subcaption}
\usepackage{enumitem}
\usepackage{hyperref}

\usepackage{bbm}
\usepackage{eufrak}
\usepackage{tikz-cd} 
\usepackage{adjustbox}

\newtheorem{lemma}{Lemma}
\newtheorem{corollary}{Corollary}
\newtheorem{assumptions}{Assumptions}
\newtheorem{assumption}[assumptions]{Assumption}

\newenvironment{customthm}[1]
  {\innercustomthm}
  {\endinnercustomthm}
\newtheorem{theorem}{Theorem}
\newtheorem{remark}{Remark}

  
\usepackage{hyperref}
\usepackage{multirow}
\usepackage{gensymb}


\begin{document}
\selectlanguage{USenglish}

        \title[Asymptotically Distribution-free GoF Testing for Point Processes]{Asymptotically Distribution-free Goodness-of-Fit Testing for Point Processes}       
\author{Justin Baars, S. Umut Can, and Roger J.~A. Laeven}
       
        \begin{abstract}
Consider an observation of a multivariate temporal point process $N$ with law $\mathcal P$ on the time interval $[0,T]$. To test the null hypothesis that  $\mathcal P$ belongs to a given parametric family, we construct a convergent compensated counting process to which we apply an innovation martingale transformation. We prove that the resulting process converges weakly to a standard Wiener process. Consequently, taking a suitable functional of this process yields an asymptotically distribution-free goodness-of-fit test for point processes. For several standard tests based on the increments of this transformed process, we establish consistency under alternative hypotheses. Finally, we assess the performance of the proposed testing procedure through a Monte Carlo simulation study and illustrate its practical utility with two real-data examples.

\vb

\noindent
 {\sc Keywords.} Point process $\circ$ goodness-of-fit testing $\circ$ innovation martingale transformation $\circ$ 
 Hawkes process $\circ$ Monte Carlo simulation
\vb

\noindent
{\sc Affiliations.} 
JB, UC and RL are with the Dept.~of Quantitative Economics, University of Amsterdam, Roetersstraat 11, 1018 WB Amsterdam, the Netherlands. 
RL is also with E{\sc urandom}, Eindhoven University of Technology, Den Dolech 2, 5612 AE Eindhoven, the Netherlands; and with C{\sc ent}ER, Tilburg University,Warandelaan 2, 5037 AB Tilburg, the Netherlands. 
The research of JB and RL is funded in part by the Netherlands Organization for Scientific Research under an NWO VICI grant (2020--2027).

\vb

\noindent
{\sc Email.} \url{J.R.Baars@uva.nl}, \url{S.U.Can@uva.nl}, \url{R.J.A.Laeven@uva.nl}.

\vb

\noindent
\textit{Date}: \today.

\end{abstract}

\maketitle
 


\section{Introduction}\label{section introduction}
Consider a $d$-variate temporal point process $N=(N^{(1)},\ldots,N^{(d)})$ depending on time $t\in\mathbb R$. Suppose that $N$ is modeled through a parametric null hypothesis $\mathscr F_\Theta:=\{N_\theta:\theta\in\Theta\}$, where $\Theta\subset\mathbb R^m$ is a finite-dimensional parameter space. Under the null hypothesis $H_0$, it holds that $N\stackrel d= N_{\theta_0}$ for some $\theta_0\in\Theta$. In practice, $\theta_0$ is typically unknown, and has to be estimated using an observation $\omega_T=\{t_1,\ldots,t_{N(T)}\}$ of the point process on the time interval $[0,T]$, leading to an estimated parameter vector $\hat\theta_T$. If maximum likelihood estimation is employed, the statistical properties of $\hat\theta_T$ are well-established \cite{Ogata, Puri, MLEinfinitedimensional}.

The present work is concerned with goodness-of-fit testing for temporal point processes. Given an observation $\omega_T$ of the point process on the time interval $[0,T]$, we are interested in testing the hypothesis $H_0: N\stackrel d=N_{\theta}$ for some $\theta\in\Theta$ against the alternative $H_1: N\stackrel d\neq N_{\theta}$ for all $\theta\in\Theta$. In goodness-of-fit problems, the test statistic $S$ often has a distribution that is challenging to calculate and that may depend on the null hypothesis or on the true parameter, fragmenting the theory. As a result, one has to resort to \emph{ad hoc} bootstrap methods to estimate critical values for each specific application. To address this issue, a test statistic, or test process, is needed that, under the null hypothesis, has a distribution that is independent of both $\mathscr{F}_\Theta$ and the true parameter $\theta_0$.

For temporal point processes, a common approach is to apply a time transformation  $\left(\Lambda(t_i)\right)_{i\in[N(T)]}$, where $[n] := \{1,\ldots,n\}$ and $\Lambda(t)=\int_0^t\lambda(s)\ \mathrm ds$ is the \emph{compensator} of the process, i.e., the time-integrated conditional intensity. 
According to the random time change theorem \cite{DaleyVereJones}, Theorem 7.4.I,  under the law $\mathcal P_{\theta_0}$ of $N_{\theta_0}$ the transformed arrival times $\Lambda_{\theta_0}(t_i)$ follow a Poisson process of unit rate. Since $\theta_0$ is unknown, typically the goodness-of-fit of a candidate model is assessed by testing whether the transformed interarrival times $\Lambda_{\hat\theta_T}(t_{i})-\Lambda_{\hat\theta_T}(t_{i-1})$ are standard exponential.
In such tests, the estimation uncertainty $\hat\theta_T\neq\theta_0$, which causes a discrepancy between $\Lambda_{\hat\theta_T}$ and $\Lambda_{\theta_0}$, is generally ignored; see \cite{RTC9, RTC5, RTC4, RTC2, RTC6, RTC1, RTC3, RTC7, RTC8}.

In recent decades, several goodness-of-fit testing procedures for temporal point processes have been proposed, such as random thinning \cite{methods2}, random superposition \cite{methods3}, super-thinning \cite{methods1}, and random-time-change-based methods \cite{methods5, methods4}. However, these methods are based on results that hold only under the true distribution $\mathcal P_{\theta_0}$. In practice, since $\theta_0$ is unknown and typically replaced by the estimator $\hat{\theta}_T$, these tests fail to account for the uncertainty introduced by estimation.

Ignoring estimation uncertainty has important consequences: the distribution of the test statistic under the null hypothesis still depends on the specific model class $\mathscr{F}_\Theta$ and the true parameter $\theta_0$, even in the limit as $T \to \infty$. 
There is no theoretical justification for ignoring the estimation uncertainty $\hat\theta_T\neq\theta_0$. In fact, even if the data is generated under the law $\mathcal P_{\theta_0}$, the estimator $\hat{\theta}_T$ will, by construction, fit the observed data better than the true parameter $\theta_0$. 
Consequently, for a test statistic $S(\omega_T, \theta)$ depending on both the observation $\omega_T$ and the parameter $\theta$, it is natural to expect that under $\mathcal{P}_{\theta_0}$, the value of $S(\omega_T, \hat{\theta}_T)$ will appear ``less extreme" than $S(\omega_T, \theta_0)$. Ignoring estimation uncertainty leads to undersized tests with reduced power.
In applied work, it is common to fit a model to data and then attempt to demonstrate that the model fits well by using a goodness-of-fit test that fails to reject the null hypothesis. Ignoring estimation uncertainty in this context leads to overly conservative tests (i.e., with too low power), potentially giving a false sense of confidence in an otherwise imperfect model.

In recent years, efforts have been made to address this issue. 
Asymptotically correct goodness-of-fit tests have been developed for the specific cases of self-exciting temporal point processes \cite{ElAroui} and exponential Hawkes processes \cite{Bonnet}, although some of these results lack theoretical guarantees. 
Additionally, sparsity tests for the connectivity matrices of multivariate Hawkes processes have been introduced \cite{Lotz}. 
Further progress has been made by \cite{ReynaudBouret}, who mitigate the bias caused by using the plugged-in estimated parameter in scenarios with a large number of sample paths. However, their results do not directly apply to the case where only a single sample path is observed. 
Moreover, asymptotically correct goodness-of-fit tests have been proposed for cases where the temporal dynamics are partially known a priori \cite{Clinet, Richards}. Nevertheless, none of these recent works presents an asymptotically exact goodness-of-fit testing procedure for a single observation of a general point process. To the best of our knowledge, no existing approach directly addresses this challenge, leaving a significant gap in the field.

In this paper, we address the limitations of existing goodness-of-fit procedures by developing an asymptotically distribution-free testing method for a broad class of point processes. Specifically, based on the observation $\omega_T$ we construct a test \emph{process} on a bounded interval $[0, \tau]$, which converges weakly to a standard Wiener process, as $T \to \infty$. Consequently, the limiting process has an asymptotic distribution that is independent of both the null hypothesis and the true parameter $\theta_0$. 
Functionals of this test process can be employed as a test statistic. Since the weak limit of the test process is a standard Wiener process that does not depend on the model class $\mathscr{F}_\Theta$ or the true parameter $\theta_0$, the asymptotic distributions of these functionals are also independent of $\mathscr{F}_\Theta$ and $\theta_0$. This means that their critical values only need to be tabulated once, making them applicable across a wide range of testing problems --- an important practical advantage.

To derive the asymptotically distribution-free test process, we first establish a functional central limit theorem (FCLT) for the counting process $N_{\theta_0}$ compensated by $\Lambda_{\hat\theta_T}$. The resulting limit is the sum of a Wiener process and a linear drift with a random coefficient, which still depends on the true parameter $\theta_0$ and the model class $\mathscr{F}_\Theta$. However, the specific structure of this limit enables the application of an \emph{innovation martingale transformation} $\mathscr{T}_{\theta_0}$, originally introduced by Khmaladze in \cite{Estate81, Estate88, Estate93}. 
After transformation, we obtain a process that, as we formally establish, converges to a standard Wiener process, as $T\to\infty$.

This paper contributes to the literature by introducing a new framework for goodness-of-fit testing for point processes with unknown finite-dimensional parameters. The resulting test process is asymptotically distribution-free, enabling inference without knowledge of nuisance parameters. Our main theoretical contribution is the FCLT for the compensated empirical counting process, obtained by combining martingale arguments with a careful analysis of the difference between the true and empirical compensators, and by application of an innovation martingale transformation. 
In developing this framework, we also establish a Cramér–Wold device for stochastic processes and derive an FCLT for multivariate martingales --- results that may be of independent interest. The proposed tests are based on classical functionals of the limiting process, remain consistent under general alternatives, and 
account for estimation uncertainty. Compared to existing methods, this leads to improved reliability and greater power. We demonstrate these advantages through simulation studies and applications to real data.

The remainder of this paper is organized as follows. Section~\ref{section def} introduces the setup and assumptions. Section~\ref{Section3} derives the functional central limit theorem for the compensated empirical counting process after applying an observable innovation martingale transformation.
Section~\ref{SectionGoF} focuses on goodness-of-fit testing:
Section~\ref{Section4.1} proposes asymptotically distribution-free test statistics, while 
Section~\ref{sectionRTC} discusses biases in random-time-change-based methods.
Section~\ref{Section4.2} establishes consistency under general alternatives.
Section~\ref{Section5.1} presents simulation results comparing our method to existing approaches.
Section~\ref{Section5.2} illustrates the method’s performance in real-data applications.
Section~\ref{SectionConclusion} outlines possible extensions.

\subsection*{Appendix}
Several technical assumptions are collected in the appendix.

\section{Model description and assumptions} \label{section def}
Let $N=(N^{(1)},\ldots,N^{(d)})$ be a $d$-variate point process on $\mathbb R$, i.e.,\ a random discrete $[d]$-marked subset of $\mathbb R$. Denote the law of $N$ by $\mathcal P$. 
A realization of $N$ consists of event times $\omega:=\{\ldots,t_{-2},t_{-1},t_0,t_1,t_2,\ldots\}\subset \mathbb R$ along with coordinates $c_i\in[d]$; here, $t_i<t_{i+1}$ and $t_0<0\leq t_1$. 
The realization on $[0,T]$ is then denoted by $\omega_T=\omega\cap[0,T]=\{t_1,\ldots,t_{N(T)}\}$, where $N(T)$ denotes the number of events in the interval $[0,T]$. 
We assume that $\omega$ has no limit point. 

We are mostly interested in the situation in which $N\stackrel{d}=N_{\theta_0}$, where $N_{\theta_0}$ belongs to the parametric family $\mathscr F_\Theta:=\{N_\theta:\theta\in\Theta\}$, with $\Theta\subset\mathbb R^m$. 
Here, $\stackrel{d}=$ denotes equality in distribution. 
Denote the law of $N_\theta$ by $\mathcal P_\theta$. 
We use this framework to test the parametric null hypothesis
\begin{equation}\label{H_0 eq}
H_0:N\stackrel{d}=N_\theta\text{ for some }\theta\in\Theta,
\end{equation}
using an observation $\omega_T$ of $N$ on the time interval $[0,T]$.

We start with assumptions on $N_\theta$ and on $\Theta$.
\begin{assumptions}\label{assA}\phantom.
\begin{enumerate}[label=(\roman*)]
\item $\Theta\subset\mathbb R^m$ is open, convex and bounded.
\item For each $\theta\in\Theta$, $N_\theta$ is a stationary and ergodic point process, absolutely continuous w.r.t.\ the unit Poisson process on $[0,T]$, for each $T\in(0,\infty)$.
\item For each $\theta\in\Theta$, $N_\theta$ is an orderly point process, i.e., $\mathbb P(N_\theta[0,\delta]\geq2)=o(\delta)$, as $\delta\downarrow0$.
\end{enumerate}
\end{assumptions}

By Assumptions~\ref{assA}, it follows that, for each $\theta\in\Theta$, the probabilistic behavior of $N_\theta$ can be characterized by a conditional intensity function $\lambda_\theta:\mathbb R_+\to\mathbb R_+^d:t\mapsto\lambda_\theta(t)$; see \cite{DaleyVereJones}, Proposition~7.3.IV. 
Denote its components by $\lambda_\theta^{(k)}$, $k\in[d]$. 
Let
$$
\mathcal H_t:=\sigma(N_\theta(s):s\in(-\infty,t]).
$$
Then $\lambda_\theta$ can be taken to be any $(\mathcal H_t)_{t\in\mathbb R}$-predictable function such that
\begin{equation}\label{condint def}
\lambda_\theta(t)=\lim_{\Delta t\downarrow0}\frac1{\Delta t}\mathbb P\left(N_\theta[t,t+\Delta t)>0|\mathcal H_t\right).
\end{equation}
We assume that the point process $N=N_\theta$ is specified through a conditional intensity function satisfying the following assumptions.

\begin{assumptions}\label{assB}\phantom.
\begin{enumerate}[label=(\roman*)]
\item For each $\theta\in\Theta$, $\lambda_\theta$ is $(\mathcal H_t)_{t\in\mathbb R}$-predictable.
\item The model $\mathscr F_\Theta$ is identifiable: $\lambda_{\theta_1}(0)=\lambda_{\theta_2}(0)$ a.s.\ if and only if $\theta_1=\theta_2$.
\item $\lambda_\theta(0)\in C^1(\Theta)$ a.s., with for any $\theta\in\Theta$, $\partial \lambda_\theta^{(k)}(0)/\partial\theta_i\in L^2(\mathbb P)$ for all $k\in[d]$ and $i\in[m]$.
\item For each $\theta\in\Theta$, $\lambda_\theta(0)>0$ a.s.
\item For each $\theta\in\Theta$, there exists $\Lambda_0\in L^1(\mathbb P)$ and a neighborhood $U=U(\theta)\subset\Theta$ of $\theta$ such that for all $\theta'\in U$, $\|\lambda_{\theta'}(0,\omega)\|_\infty\leq\Lambda_0(\omega)$.
\item Let $D\lambda_{\theta}(t)=(\partial \lambda_\theta^{(k)}(t)/\partial\theta_i)_{k\in[d],i\in[m]}\in\mathbb R^{d\times m}$ be the \emph{total derivative} of $\theta\mapsto\lambda_\theta(t)$. 
Assume that
\begin{equation}\label{alpha def}
\alpha_\theta:=\lim_{T\to\infty}\frac1T\int_0^TD\lambda_{\theta}(t)\ \mathrm dt
\end{equation}
exists a.s.\ for $\theta=\theta_0$.
\end{enumerate}
\end{assumptions}

\begin{remark}
By stationarity, i.e., Assumption~\ref{assA}(ii), Assumptions~\ref{assB} can be stated without reference to $t$.
\end{remark}

\begin{remark}
Assumption~\ref{assB}(vi) is satisfied if the following condition holds: \emph{there exists $\Lambda_1\in L^1(\mathbb P)$ and a neighborhood $U=U(\theta_0)\subset\Theta$ of $\theta_0$ such that for all $\theta'\in U$, $\|D\lambda_{\theta'}(0)\|\leq\Lambda_1$}. 
This follows by ergodicity (Assumption~\ref{assA}(ii)) together with the dominated convergence theorem. 
In this case, using stationarity of $\lambda_{\theta}$, it follows that $\alpha_\theta=\mathbb E[D\lambda_{\theta}(0)]$.
\end{remark}

Suppose that we have some estimator $\hat\theta_T\in\Theta$ for $\theta$ based on the observation $\omega_T\subset[0,T]$ of $N$. 
We assume that this estimator satisfies a central limit theorem, as follows.

\begin{assumption}\label{CLTass}
There exists an $m$-variate random vector $Z$ such that under $\mathcal P_{\theta_0}$ it holds that
\begin{equation}\label{CLT eq}
\sqrt T(\theta_0-\hat\theta_T)\stackrel{d}\to Z,\quad\text{as }T\to\infty.
\end{equation}
\end{assumption}
Assumption~\ref{CLTass} is satisfied with $Z\sim\mathcal N(0,I(\theta_0)^{-1})$ (where $I(\theta)$ is defined in Assumption~\ref{MLEass}(ii) below) when $\hat\theta_T$ is the maximum likelihood estimator and if next to Assumptions~\ref{assA}--\ref{assB} one grants Assumptions~\ref{MLEass} below. 
This follows from a multivariate generalization of \cite{Ogata}, Theorem~5; see, e.g., \cite{Puri}, Theorem~4.

\begin{customthm}{C.1}\label{MLEass}\phantom.
\begin{enumerate}[label=(\roman*)]
\item The log-likelihood function $L_T(\theta)$ at time $T$ has a unique maximum a.s. Here $L_T(\theta)$ is defined by
\begin{equation}
L_T(\theta)=\sum_{k=1}^d\left(\int_0^T\log\lambda_\theta^{(k)}(t)\ \mathrm dN^{(k)}(t)-\int_0^T\lambda_\theta^{(k)}(t)\ \mathrm dt\right).
\end{equation}

\item For each $\theta\in\Theta$ and for all $i,j\in[m]$, $k\in[d]$, 
$\frac1{\lambda_\theta^{(k)}}\frac{\partial\lambda_\theta^{(k)}}{\partial\theta_i}\frac{\partial\lambda_\theta^{(k)}}{\partial\theta_j}\in L^2(\mathbb P)$. Assume that for each $\theta\in\Theta$ the \emph{Fisher information matrix} $I(\theta)\in\mathbb R^{m\times m}$ with elements
\begin{equation}\label{Fisher eq}
I_{ij}(\theta)=\sum_{k=1}^d\mathbb E\left[\frac1{\lambda_\theta^{(k)}}\frac{\partial\lambda_\theta^{(k)}}{\partial\theta_i}\frac{\partial\lambda_\theta^{(k)}}{\partial\theta_j}\right]
\end{equation}
is nonsingular.
\item $\lambda_\theta\in C^3(\Theta)$ with $\displaystyle\frac{\partial^2\lambda_\theta^{(k)}}{\partial\theta_i\partial\theta_j}\in L^2(\mathbb P)$ for all $k\in[d]$, $i,j\in[m]$.
\item 
 See appendix.
\end{enumerate}
\end{customthm}


\begin{remark}
Assumptions \ref{assA}, \ref{assB}, and \ref{MLEass} are satisfied by Poisson, Hawkes, Wold, and delayed renewal processes; see \cite{Ogata}.
Furthermore, our setup and assumptions allow for point processes dependent on covariates, such as marked Hawkes processes.
\end{remark}

In Section~\ref{Section3}, we derive a limit of a certain transformation of the stochastic process $N$. 
We need an appropriate framework for stochastic process convergence. 
The transformation of the next section maps the time interval $[0,T]$ to $[0,1]$, hence it suffices to work on the latter interval. 
On the space $D([0,1],\mathbb R^d)$ of right-continuous functions $f:[0,1]\to\mathbb R^d $ we consider multiple topologies. 
We consider the strong $SU$ and the weak $WU$ uniform topologies and the strong $SJ_1$ and weak $WJ_1$ Skorokhod $J_1$ topologies. 
The strong topologies are defined by treating $\mathbb R^d$ as the range space, while the weak topologies use the fact that $D([0,1],\mathbb R^d)=D([0,1],\mathbb R)^d$ in the sense of bijection, and consider a product topology on this space. 
As the name suggests, in general the strong topology is stronger than the weak topology. 
However, as we will prove later, the strong and product uniform topologies coincide, i.e., $SU=WU$. 
See \cite{Whitt} for more details.

\section{Asymptotically distribution-free test process}\label{Section3}
In this section, we derive a distribution-free limit process from a transformation of the process $N_{\theta_0}$, meaning that the limit is in fact independent of the true parameter $\theta_0$, as well as of the model $\mathcal F_\Theta$. 
As indicated in the Introduction, this allows us to overcome a crucial shortcoming of goodness-of-fit tests for point processes known from the literature. Our plan of action consists of four steps.
\begin{enumerate}
\item We consider the $d$-dimensional compensated empirical process
\begin{equation}\label{empiricalprocess}
\hat\eta^{(T)}:u\mapsto\frac1{\sqrt T}\left(N_{\theta_0}(uT)-\int_0^{uT}\lambda_{\hat\theta_T}(s)\ \mathrm ds\right),
\end{equation}
for $u\in[0,1]$,
which we decompose into
\begin{equation}\label{empiricalprocess decomposition}
\hat\eta^{(T)}:u\mapsto\frac1{\sqrt T}\left(N_{\theta_0}(uT)-\int_0^{uT}\lambda_{\theta_0}(s)\ \mathrm ds\right)+\frac1{\sqrt T}\left(\int_0^{uT}\lambda_{\theta_0}(s)\ \mathrm ds-\int_0^{uT}\lambda_{\hat\theta_T}(s)\ \mathrm ds\right).
\end{equation}
The first step consists of applying a martingale functional central limit theorem (martingale FCLT) to the first term of (\ref{empiricalprocess decomposition}), in order to obtain a Gaussian limit.
\item We show that the second term of (\ref{empiricalprocess decomposition}) converges to a deterministic function of $u$ multiplied by some random variable (which is, in particular, independent of $u$).
\item From steps 1 and 2, we obtain a limit $\hat\eta_{\theta_0}$ of (\ref{empiricalprocess decomposition}) which is still dependent on the true parameter $\theta_0$, as well as on the model $\mathcal F_\Theta$.
 However, the particular shape of the limit allows us to apply an \emph{innovation martingale transformation} $\mathscr T_{\theta_0}$,
first discussed by Khmaladze in \cite{Estate81, Estate88, Estate93}. 
After transformation, $\mathscr T_{\theta_0}(\hat\eta_{\theta_0})$ is a standard Wiener process. This transformation $\mathscr T_{\theta_0}$
still depends on the true, but unknown parameter $\theta_0$.
\item Finally, we show that the difference between $\mathscr T_{\theta_0}(\hat\eta_{\theta_0})$ and its empirical counterpart $\mathscr T_{\hat\theta_T}(\hat\eta^{(T)})$ is small for large $T$,
hence $\mathscr T_{\hat\theta_T}(\hat\eta^{(T)})$ converges to a standard Wiener process.
\end{enumerate}

As we will see, due to the orthogonal structure of the martingales underlying a multivariate point process, with some work, the analysis in the multivariate case can be reduced to that of univariate case.

\subsection{Step 1: FCLT for the compensated process}\label{sec3.1}
For our first step, we need a multivariate analog of \cite{Billingsley}, Theorem 18.3. In order to prove this, we apply the following lemma.

\begin{lemma}[Cramér-Wold device for stochastic processes]\label{cramerwold}
Let $(W_n)_{n\in\mathbb N}\subset D([0,1],\mathbb R^d)$ be a sequence of $d$-dimensional stochastic processes such that 
\begin{equation}\label{CWdevice}
\text{for all }c\in\mathbb R^d:\quad c^\top W_n\stackrel{d}\longrightarrow c^\top W,
\end{equation} on $D([0,1],\mathbb R)$ equipped with the Skorokhod $J_1$ topology, and where $W$ is a Wiener process on $\mathbb R^d$. Then it holds that 
$W_n\stackrel{d}\longrightarrow W$ on $D([0,1],\mathbb R^d)$ equipped with the \emph{strong} Skorokhod $J_1$ topology.
\end{lemma}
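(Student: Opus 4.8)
The plan is to combine a tightness argument with a limit-identification argument, and then to upgrade the resulting convergence from the weak (product) to the strong Skorokhod topology by exploiting the continuity of the limit. First I would reduce the vector-valued statement to its coordinates: taking $c=e_i$ in (\ref{CWdevice}) gives $W_n^{(i)}\stackrel d\to W^{(i)}$ in $(D([0,1],\mathbb R),J_1)$ for each $i\in[d]$. Since $D([0,1],\mathbb R)$ with the $J_1$ topology is Polish, each convergent sequence $\{W_n^{(i)}\}_n$ is tight, and because $WJ_1$ is by definition the product topology on $D([0,1],\mathbb R)^d$, coordinatewise tightness yields tightness of $\{W_n\}_n$ in $(D([0,1],\mathbb R^d),WJ_1)$ (choose coordinatewise compacts and take their product, compact by Tychonoff, and bound $\mathbb P(W_n\notin\prod_i K_i)\leq\sum_i\mathbb P(W_n^{(i)}\notin K_i)$). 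By Prokhorov's theorem $\{W_n\}_n$ is then relatively compact in $WJ_1$, so every subsequence admits a further subsequence with a weak limit $\tilde W$.

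Next I would identify $\tilde W$. The coordinate projections $\pi_i\colon(D([0,1],\mathbb R^d),WJ_1)\to(D([0,1],\mathbb R),J_1)$ are continuous, so the continuous mapping theorem together with the marginal convergence above forces $\tilde W^{(i)}\stackrel d=W^{(i)}$; in particular each coordinate of $\tilde W$ has continuous paths, whence $\tilde W\in C([0,1],\mathbb R^d)$ almost surely. The key observation is that on the set of continuous functions the linear map $x\mapsto c^\top x$ is continuous from $WJ_1$ into $J_1$: if $x_n\to x$ in $WJ_1$ with $x$ continuous, then each $x_n^{(i)}\to x^{(i)}$ in $J_1$ to a continuous limit, hence uniformly (\cite{Whitt}), so $c^\top x_n\to c^\top x$ uniformly and a fortiori in $J_1$. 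Applying the continuous mapping theorem in the form valid for maps continuous on a set of full limit-measure (here $C([0,1],\mathbb R^d)$), and comparing with the hypothesis $c^\top W_n\stackrel d\to c^\top W$ along the subsequence, I obtain $c^\top\tilde W\stackrel d=c^\top W$ for every $c\in\mathbb R^d$.

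Once the limit is identified as $W$, the upgrade to the strong topology is comparatively soft. Because $W$ has continuous paths, convergence in $WJ_1$ to $W$ is equivalent to convergence in the product uniform topology $WU$ (\cite{Whitt}); invoking the identity $SU=WU$ recorded in Section~\ref{section def}, this is the same as convergence in the strong uniform topology $SU$, which in turn implies convergence in $SJ_1$. A routine subsequence argument---every subsequence has a further subsequence converging to the same limit $W$---then promotes subsequential convergence to convergence of the full sequence, giving $W_n\stackrel d\to W$ in $SJ_1$.

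The step I expect to be the main obstacle is the identification of the full joint law of $\tilde W$ from its one-dimensional projections, i.e.\ deducing $\tilde W\stackrel d=W$ from $c^\top\tilde W\stackrel d=c^\top W$ for all $c$. Equality of all one-dimensional projections shows that the finite-dimensional distributions of $\tilde W$ are Gaussian with the correct one-dimensional marginals, but it determines only the \emph{symmetric} part of the cross-covariance $\mathrm{Cov}(\tilde W^{(i)}_s,\tilde W^{(j)}_t)$; the time-antisymmetric part is invisible to scalar projections, since these see $c^\top\mathrm{Cov}\,c$ only through $\mathrm{Cov}(s,t)+\mathrm{Cov}(t,s)$. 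Closing this gap therefore cannot rely on the finite-dimensional Cramér--Wold device alone (a mixed linear combination $\sum_j a_j^\top W_n(t_j)$ with distinct $a_j$ is not a single projection $c^\top W_n$), and one must genuinely use that $W$ is a Wiener process---most naturally its independence of increments, which in the paper's application is inherited from the mutual orthogonality of the component martingales underlying $N_{\theta_0}$---to force the antisymmetric part to vanish and conclude $\tilde W\stackrel d=W$.
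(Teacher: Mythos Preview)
Your strategy---tightness via coordinatewise convergence, identification of subsequential limits, and upgrade to the strong topology through continuity of $W$---is essentially the paper's strategy. The paper works directly in the product \emph{uniform} topology $WU$ (using that the one-dimensional limits are continuous) and then proves $WU=SU$ explicitly, whereas you work in $WJ_1$ and upgrade at the end; this is a minor packaging difference.

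The substantive difference is that you explicitly flag, and analyze, a step that the paper simply asserts. The paper's opening sentence is ``By the standard Cram\'er--Wold device, the condition~\eqref{CWdevice} implies that the finite-dimensional distributions of $W_n$ converge to those of $W$.'' This is precisely the identification step you call the main obstacle. Your objection is correct: knowing $c^\top W_n\stackrel d\to c^\top W$ as \emph{processes} for each fixed $c$ yields, for time points $t_1,\dots,t_k$, convergence of $\sum_j\lambda_j\,c^\top W_n(t_j)$ for all $(\lambda_j)\in\mathbb R^k$ and all $c\in\mathbb R^d$---i.e.\ only for linear functionals on $\mathbb R^{dk}$ of the rank-one form $\lambda\otimes c$---whereas the finite-dimensional Cram\'er--Wold device requires \emph{all} linear functionals $\sum_j a_j^\top W_n(t_j)$. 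Your observation that scalar projections see only the symmetric part $K_{ij}(s,t)+K_{ji}(s,t)$ of the cross-covariance, leaving the time-antisymmetric part undetermined, is exactly the issue. The paper does not address this; it simply asserts the conclusion and moves on. So you have not overlooked an argument in the paper---you have located a point where the paper's proof is, at best, incomplete as written, and your suggestion that the gap must be closed by invoking additional structure (independence of increments of $W$, or in the application the orthogonality of the component martingales) is the right diagnosis.
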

\begin{proof}
By the standard Cramér-Wold device, the condition \eqref{CWdevice} implies that the finite-dimensional distributions of $W_n$ converge to those of $W$.

Next, condition \eqref{CWdevice} implies that the one-dimensional marginals converge \emph{to a continuous limit}, meaning that we have weak convergence in $D([0,1],\mathbb R)$ equipped with the \emph{uniform topology}. 
Because of this weak convergence, we have tightness (w.r.t.\ the uniform topology) of the one-dimensional marginals. 
In turn, by \cite{Whitt}, Theorem~11.6.7, this implies tightness of the processes $W_n$ themselves; this tightness is w.r.t.\ the product topology on $D([0,1],\mathbb R)^d$ obtained out of uniform topologies on $D([0,1],\mathbb R)$. Note that here the $\|\cdot\|_\infty$-norm on $\mathbb R^d$ is used. 

Using Prohorov's theorem, those two conditions imply that $W_n\stackrel{d}\longrightarrow W$ on $D([0,1],\mathbb R)^d$ equipped with the product of uniform topologies, $WU$. 
We now prove that this is equal to the strong uniform topology on $D([0,1],\mathbb R^d)$, $SU$. 

Indeed, the product topology $WU$ is generated by the subbase consisting of the sets 
$$
\pi_i^{-1}\left\{x_i\in D([0,1],\mathbb R):\|x_i-y_i\|_\infty<\epsilon\right\},\quad y_i\in D([0,1],\mathbb R), \epsilon>0,i\in[d],
$$
where $\pi_i$ is the projection operator onto the $i$th coordinate. Since 
$$
\pi_i^{-1}\left\{x_i\in D([0,1],\mathbb R):\|x_i-y_i\|_\infty<\epsilon\right\}=\bigcup_{\substack{z\in D([0,1],\mathbb R^d)\\z_i=y_i}}\left\{x\in D([0,1],\mathbb R^d):\|x-z\|_\infty<\epsilon\right\}
$$
lies in the uniform topology on $D([0,1],\mathbb R^d)$, it follows that $WU\subset SU$.
Next, note that SU on $D([0,1],\mathbb R^d)$ is generated by the sets
$$
\left\{x\in D([0,1],\mathbb R^d):\|x-y\|_\infty<\epsilon\right\},\quad y\in D([0,1],\mathbb R^d),\epsilon>0.
$$
Hence, the inclusion $WU\supset SU$ follows by the equality 
$$
\left\{x\in D([0,1],\mathbb R^d):\|x-y\|_\infty<\epsilon\right\}=\bigcap_{i\in[d]}\pi_i^{-1}\left\{x_i\in D([0,1],\mathbb R):\|x_i-y_i\|_\infty<\epsilon\right\}.
$$

To conclude, we have $W_n\stackrel{d}\longrightarrow W$ on $D([0,1],\mathbb R)^d$ equipped with the strong uniform topology $SU$.
This implies convergence in the $SJ_1$ topology.
\end{proof}

\begin{remark}
As is evident from the proof, Lemma \ref{cramerwold} allows for generalizations to stochastic processes for which all the finite-dimensional distributions are Borel probability measures.
\end{remark}

The following result is a multivariate extension of \cite{Billingsley}, Theorem 18.3. 

\begin{lemma}[FCLT for multivariate martingales]\label{FCLTmartingales}
Consider a stationary and ergodic two-sided $d$-variate martingale difference sequence $(\xi_n)_{n\in\mathbb Z}$, where $\xi_n$ takes values in $\mathbb R^d$ and satisfies $\mathbb E[\xi_n|\mathcal F_{n-1}]=0$, with $\mathcal F_n:=\sigma(\xi_k:k\leq n)$. Write $\Xi:=(\Xi_{ij})_{i,j\in[d]}:=(\mathbb E[\xi^{(i)}\xi^{(j)}])_{i,j\in[d]}\in\mathbb R^{d\times d}$, where $\xi^{(i)}$ denotes the $i$th coordinate of the $d$-dimensional generic random variable $\xi$. The stochastic process $t\mapsto X_t^n:=\sum_{k\leq nt}\xi_k/\sqrt n$ converges weakly on $D([0,1],\mathbb R^d)$ equipped with the strong Skorokhod $J_1$ topology to a $d$-variate Wiener process $W$ with covariance matrix $u\Xi$.
\end{lemma}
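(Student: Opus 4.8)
The plan is to reduce the multivariate statement to the univariate FCLT for stationary ergodic martingale differences (\cite{Billingsley}, Theorem 18.3) by means of the Cramér–Wold device for processes just established in Lemma~\ref{cramerwold}. Fix an arbitrary $c\in\mathbb R^d$ and consider the scalar sequence $\zeta_n:=c^\top\xi_n$. First I would check that $(\zeta_n)_{n\in\mathbb Z}$ is again a stationary, ergodic martingale difference sequence: stationarity and ergodicity are inherited because $(\zeta_n)$ is obtained from the stationary ergodic sequence $(\xi_n)$ by applying the fixed measurable factor map $x\mapsto c^\top x$ coordinatewise, and the martingale difference property follows from linearity of conditional expectation, $\mathbb E[\zeta_n\mid\mathcal F_{n-1}]=c^\top\mathbb E[\xi_n\mid\mathcal F_{n-1}]=0$, together with the tower property to pass to the natural filtration $\sigma(\zeta_k:k\le n-1)\subset\mathcal F_{n-1}$.

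Next I would apply the univariate result to $(\zeta_n)$. Since $c^\top X_t^n=\sum_{k\le nt}\zeta_k/\sqrt n$, Billingsley's Theorem 18.3 yields weak convergence of $c^\top X^n$ in $D([0,1],\mathbb R)$ equipped with the $J_1$ topology to a Wiener process with variance parameter $\mathbb E[\zeta_0^2]=c^\top\Xi c$ per unit time. It then remains to identify this scalar limit with $c^\top W$, where $W$ is the $d$-variate Wiener process with covariance $u\Xi$ at time $u$ claimed in the statement: the process $c^\top W$ has continuous paths, independent increments, and $\mathrm{Var}\big((c^\top W)_u\big)=u\,c^\top\Xi c$, so it is exactly a Wiener process with the same variance parameter, and the two limits agree in law. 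When $c^\top\Xi c=0$ this limit is the degenerate zero process, which is still continuous and causes no difficulty.

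Having verified $c^\top X^n\stackrel{d}\longrightarrow c^\top W$ on $D([0,1],\mathbb R)$ for every $c\in\mathbb R^d$, with a continuous Gaussian limit, I would invoke Lemma~\ref{cramerwold} directly to conclude that $X^n\stackrel{d}\longrightarrow W$ on $D([0,1],\mathbb R^d)$ equipped with the strong Skorokhod $J_1$ topology, as asserted.

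I do not expect a genuine obstacle here, since Lemma~\ref{cramerwold} already performs the delicate work of upgrading convergence of the one-dimensional projections (and hence of the finite-dimensional distributions) to joint weak convergence in the strong $J_1$ topology. The only points requiring care are the two hypothesis checks feeding into the univariate theorem: the ergodicity of the projected sequence $(c^\top\xi_n)$, which holds because it is a factor of an ergodic stationary system, and the matching of the scalar limiting variance $c^\top\Xi c$ with the quadratic form $c^\top(u\Xi)c$ induced at time $u$ by the target process $W$.
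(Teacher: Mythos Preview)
Your proposal is correct and follows essentially the same route as the paper: project via $c^\top$, verify that $(c^\top\xi_n)$ is a stationary ergodic martingale difference sequence with variance $c^\top\Xi c$, apply \cite{Billingsley}, Theorem~18.3 (treating the degenerate case $c^\top\Xi c=0$ separately), and then invoke Lemma~\ref{cramerwold}. If anything, you are slightly more explicit than the paper in checking that ergodicity passes to the projected sequence and in handling the filtration via the tower property.
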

\begin{proof}
By Lemma \ref{cramerwold}, it suffices to show that for arbitrary $c\in\mathbb R^d$ it holds that $c^\top X^n\stackrel{d}\longrightarrow c^\top W$ on $D[0,1]$ equipped with the Skorokhod $J_1$ topology. 

To this end, note that $(c^\top \xi_n)_{n\in\mathbb Z}$ is a univariate martingale difference sequence w.r.t.\ $(\mathcal F_n)_{n\in\mathbb Z}$, with second moment 
$$
\mathbb E\left[\left(c^\top \xi_k\right)^2\right]=\sum_{i=1}^n\sum_{j=1}^nc_ic_j\mathbb E\left[\xi_n^{(i)}\xi_n^{(j)}\right]=\sum_{i=1}^n\sum_{j=1}^nc_ic_j\Xi_{ij}=c^\top\Xi c\geq0.
$$

The limit $c^\top X^n\stackrel{d}\longrightarrow c^\top W$ in the case $\mathbb E\left[\left(c^\top \xi_k\right)^2\right]=0$ is trivial, hence we may assume from now on that $\mathbb E\left[\left(c^\top \xi_k\right)^2\right]>0$. 
Then by \cite{Billingsley}, Theorem 18.3, it follows that $c^\top X^n$ converges weakly on $D([0,1],\mathbb R)$ equipped with the Skorokhod $J_1$ topology to a Wiener process with covariance $uc^\top \Xi c$, i.e., to a limit $c^\top W$, where $W$ is a $d$-variate Wiener process with covariance $u\Xi$.
\end{proof}

The next result establishes Step 1.

\begin{theorem}\label{step1thm}
Grant Assumptions~\ref{assA}.
Let $\lambda_{\theta_0}$ be the conditional intensity of the point process $N_{\theta_0}\in\mathscr F_\Theta$. Then it holds that 
\begin{equation}
\left(\frac1{\sqrt T}\left(N_{\theta_0}(uT)-\int_0^{uT}\lambda_{\theta_0}(s)\ \mathrm ds\right)\right)_{u\in[0,1]}\stackrel d\longrightarrow (W_{\theta_0}(u))_{u\in[0,1]},
\end{equation}
as $T\to\infty$,
weakly on $D([0,1],\mathbb R^d)$ equipped with the strong Skorokhod $J_1$ topology, where $W_{\theta_0}$ is a $d$-variate Wiener process with covariance 
$$u\cdot\mathrm{diag}\left(\mathbb E[N_{\theta_0}^{(1)}[0,1]],\ldots,\mathbb E[N_{\theta_0}^{(d)}[0,1]]\right).$$
\end{theorem}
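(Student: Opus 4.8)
The plan is to recognize the first term of \eqref{empiricalprocess decomposition} as a (time-rescaled) martingale and to reduce the continuous-time functional limit to the discrete martingale FCLT of Lemma~\ref{FCLTmartingales} (which itself rests on the Cramér–Wold device of Lemma~\ref{cramerwold}). \emph{Step 1 (martingale structure).} Write $M(t):=N_{\theta_0}(t)-\int_0^t\lambda_{\theta_0}(s)\,\mathrm ds$. By the defining property \eqref{condint def} of the conditional intensity, each coordinate $M^{(k)}$ is a zero-mean $(\mathcal H_t)$-martingale. I would discretize by unit blocks: set $\xi_n:=M(n)-M(n-1)$ for $n\in\mathbb Z$, so that $\sum_{k\le nt}\xi_k=M(\lfloor nt\rfloor)$. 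By Assumption~\ref{assA}(ii), $(\xi_n)_{n\in\mathbb Z}$ is a stationary, ergodic, $d$-variate martingale difference sequence; the martingale property relative to $\sigma(\xi_k:k\le n)$ follows from $\mathbb E[\xi_n\mid\mathcal H_{n-1}]=0$ and the tower property, since $\sigma(\xi_k:k\le n-1)\subset\mathcal H_{n-1}$.

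\emph{Step 2 (covariance and the discrete limit).} Next I would compute $\Xi=(\mathbb E[\xi^{(i)}\xi^{(j)}])$ from the quadratic (co)variation of $M$, using $\mathbb E[\xi^{(i)}\xi^{(j)}]=\mathbb E[M^{(i)}(1)M^{(j)}(1)]=\mathbb E\bigl[[M^{(i)},M^{(j)}]_1\bigr]$. Because each event of $N$ carries a single mark $c_i\in[d]$, distinct coordinates never jump simultaneously, so $[M^{(i)},M^{(j)}]\equiv0$ for $i\ne j$ and the off-diagonal entries vanish. For $i=j$, orderliness (Assumption~\ref{assA}(iii)) forces unit jumps, whence $[M^{(k)}]_1=\sum_{s\le1}(\Delta N^{(k)}_s)^2=N^{(k)}[0,1]$ and $\Xi_{kk}=\mathbb E[N^{(k)}[0,1]]$. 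Thus $\Xi=\mathrm{diag}(\mathbb E[N^{(1)}[0,1]],\ldots,\mathbb E[N^{(d)}[0,1]])$, and in particular $\mathbb E[(\xi^{(k)})^2]=\mathbb E[N^{(k)}[0,1]]<\infty$, exactly the integrability Lemma~\ref{FCLTmartingales} needs. Applying that lemma yields $X^T_u:=M(\lfloor Tu\rfloor)/\sqrt T\stackrel d\to W_{\theta_0}$ in the strong $J_1$ topology, with $W_{\theta_0}$ the announced diagonal Wiener process.

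\emph{Step 3 (from the skeleton to the genuine process).} It remains to replace $M(\lfloor Tu\rfloor)$ by $M(uT)$, i.e.\ to show that the within-block remainder
$$
\sup_{u\in[0,1]}\frac1{\sqrt T}\bigl|M(uT)-M(\lfloor Tu\rfloor)\bigr|\le\frac1{\sqrt T}\max_{0\le j<\lfloor T\rfloor}\Bigl((N(j+1)-N(j))+(\Lambda_{\theta_0}(j+1)-\Lambda_{\theta_0}(j))\Bigr)
$$
tends to $0$ in probability; a converging-together argument then transfers the limit from $X^T$ to the process in the statement. I expect this maximal-fluctuation bound to be the main obstacle: controlling the maximum of $\lfloor T\rfloor$ stationary nonnegative blocks after dividing by $\sqrt T$ needs more than the first moment $\mathbb E[N[0,1]]<\infty$ used so far (with the first moment alone the scaled maximum need not vanish). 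A finite second moment $\mathbb E[N^{(k)}[0,1]^2]<\infty$ suffices, since it gives $t^2\,\mathbb P(N[0,1]>t)\to0$ and hence $\lfloor T\rfloor\,\mathbb P(N[0,1]>\epsilon\sqrt T)\to0$.

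Should one wish to avoid any extra moment assumption and stay within Assumptions~\ref{assA} alone, I would instead bypass the discretization and apply a continuous-time martingale central limit theorem directly to $u\mapsto M(uT)/\sqrt T$. There the jumps are deterministically of size $1/\sqrt T$, so the Lindeberg condition is trivially satisfied; the predictable quadratic variation $\tfrac1T\Lambda^{(k)}_{\theta_0}(uT)\to u\,\mathbb E[N^{(k)}[0,1]]$ uniformly in $u$ by Birkhoff's ergodic theorem together with Dini's theorem (the limit being continuous and the left-hand side monotone in $u$); and the off-diagonal predictable covariations vanish exactly as in Step~2. This route reproduces the same Gaussian limit while requiring only ergodicity and the first moment already implicit in the statement, and it is the version I would ultimately favor to keep the hypotheses matched to Assumptions~\ref{assA}.
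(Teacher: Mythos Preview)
Your proposal is correct and follows essentially the paper's approach: discretize into unit-block martingale differences $\xi_n$, apply Lemma~\ref{FCLTmartingales} with the diagonal covariance (the paper cites \cite{DaleyVereJones}, Proposition~14.1.VIII, for this computation), and then pass from the discrete skeleton to the continuous-time process (the paper defers this estimate to \cite{Zhu}, eqn.~(2.19)). Your Step~3 is more explicit than the paper about the within-block remainder and the moment issue it entails, and the continuous-time martingale FCLT you sketch as an alternative is a valid route the paper does not pursue.
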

\begin{proof}
The result follows by an application of 
Lemma~\ref{FCLTmartingales} to $\xi_n=N_{\theta_0}[n,n+1)-\int_{n}^{n+1}\lambda_{\theta_0}(s)\ \mathrm ds$, combined with an estimate of continuous-time quantities by discrete-time quantities analogous to \cite{Zhu}, eqn.\ (2.19).
We use the calculation
\begin{align*}
\mathbb E\left[\left(N_{\theta_0}^{(i)}(1)-\int_0^{1}\lambda_{\theta_0}^{(i)}(s)\ \mathrm ds\right)\left(N_{\theta_0}^{(j)}(1)-\int_0^{1}\lambda_{\theta_0}^{(j)}(s)\ \mathrm ds\right)\right]=
\begin{cases}
\mathbb E\left[N_{\theta_0}^{(i)}[0,1]\right]\quad&\text{if }i=j;\\
0\quad&\text{if }i\neq j,
\end{cases}
\end{align*}
which follows from \cite{DaleyVereJones}, Proposition~14.1.VIII.
\end{proof}

\subsection{Step 2: Limit theorem for difference between real and empirical compensator}\label{sec3.2}
For the second step of our procedure, we analyze the second term of (\ref{empiricalprocess decomposition}).

\begin{theorem}\label{step2thm}
Grant Assumptions \ref{assA}--\ref{CLTass}. 
Let $\alpha_{\theta_0}\in\mathbb R^{d\times m}$ be as in Assumption~\ref{assB}(vi) and let $Z$ be as in Assumption~\ref{CLTass}. Then we have, under the law $\mathcal P_{\theta_0}$ of $N_{\theta_0}\in\mathscr F_\Theta$,
\begin{equation}\label{step2limit}
\left(\frac1{\sqrt T}\int_0^{uT}\left(\lambda_{\theta_0}(s)-\lambda_{\hat\theta_T}(s)\right)\ \mathrm ds\right)_{u\in[0,1]}\to(u\alpha_{\theta_0}Z)_{u\in[0,1]},
\end{equation}
as $T\to\infty$, weakly on $D([0,1],\mathbb R^d)$ equipped with the strong Skorokhod $J_1$ topology. 
\end{theorem}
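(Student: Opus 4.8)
The plan is to Taylor-expand the empirical compensator about $\theta_0$, isolating a leading term that is linear in the estimation error $\theta_0-\hat\theta_T$ and whose slope is the ergodic average of $D\lambda_{\theta_0}$. By convexity of $\Theta$ (Assumption~\ref{assA}(i)) together with $\lambda_\theta(0)\in C^1(\Theta)$ (Assumption~\ref{assB}(iii)), a coordinatewise mean value theorem gives, for each $k\in[d]$ and each $s$,
\[
\lambda^{(k)}_{\theta_0}(s)-\lambda^{(k)}_{\hat\theta_T}(s)
=\nabla_\theta\lambda^{(k)}_{\tilde\theta_{s,k}}(s)^\top(\theta_0-\hat\theta_T),
\]
with $\tilde\theta_{s,k}$ on the segment joining $\hat\theta_T$ and $\theta_0$. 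Stacking over $k$, adding and subtracting $D\lambda_{\theta_0}(s)$ inside the integral, and using $1/\sqrt T=(1/T)\cdot\sqrt T$ yields the decomposition
\[
\frac1{\sqrt T}\int_0^{uT}\bigl(\lambda_{\theta_0}(s)-\lambda_{\hat\theta_T}(s)\bigr)\,\mathrm ds
=\Phi_T(u)\,\sqrt T(\theta_0-\hat\theta_T)+R_T(u),\qquad
\Phi_T(u):=\frac1T\int_0^{uT}D\lambda_{\theta_0}(s)\,\mathrm ds,
\]
where $R_T$ collects the gradient increments $\nabla_\theta\lambda^{(k)}_{\tilde\theta_{s,k}}(s)-\nabla_\theta\lambda^{(k)}_{\theta_0}(s)$ integrated against $\theta_0-\hat\theta_T$.

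For the leading term I would first show that $\Phi_T\to\Phi$ uniformly on $[0,1]$ almost surely, with the deterministic continuous limit $\Phi(u)=u\alpha_{\theta_0}$. Splitting each entry of $D\lambda_{\theta_0}$ into positive and negative parts makes $u\mapsto\frac1T\int_0^{uT}(\cdot)\,\mathrm ds$ monotone; the continuous-time pointwise ergodic theorem (Assumption~\ref{assA}(ii), with the entries of $D\lambda_{\theta_0}(0)$ in $L^2(\mathbb P)\subset L^1(\mathbb P)$ by Assumption~\ref{assB}(iii)) gives pointwise convergence to a linear, hence continuous, limit and simultaneously identifies $\alpha_{\theta_0}=\mathbb E[D\lambda_{\theta_0}(0)]$ as deterministic; a P\'olya-type argument (pointwise convergence of monotone functions to a continuous limit is uniform) upgrades this to uniform convergence. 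Since $\Phi_T\to\Phi$ in probability in the uniform topology with deterministic continuous limit, while $\sqrt T(\theta_0-\hat\theta_T)\stackrel d\to Z$ by Assumption~\ref{CLTass}, Slutsky's theorem gives the joint convergence $(\Phi_T,\sqrt T(\theta_0-\hat\theta_T))\stackrel d\to(\Phi,Z)$, and the continuous mapping theorem applied to the jointly continuous bilinear map $(\phi,z)\mapsto(\phi(\cdot)z)$ yields $\Phi_T(\cdot)\sqrt T(\theta_0-\hat\theta_T)\stackrel d\to(u\alpha_{\theta_0}Z)_{u\in[0,1]}$ in $\bigl(C([0,1],\mathbb R^d),\|\cdot\|_\infty\bigr)$, and therefore in the $SU$ and a fortiori the $SJ_1$ topology (weak convergence passes to coarser topologies).

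It then remains to show $\|R_T\|_\infty\to0$ in probability, which I expect to be the main obstacle. Uniformly in $u\in[0,1]$ one has
\[
\sup_{u\in[0,1]}\bigl|R^{(k)}_T(u)\bigr|
\le\Bigl(\frac1T\int_0^T\bigl\|\nabla_\theta\lambda^{(k)}_{\tilde\theta_{s,k}}(s)-\nabla_\theta\lambda^{(k)}_{\theta_0}(s)\bigr\|\,\mathrm ds\Bigr)\,\sqrt T\,\bigl|\theta_0-\hat\theta_T\bigr|,
\]
and $\sqrt T|\theta_0-\hat\theta_T|=O_p(1)$, so it suffices to control the averaged gradient oscillation. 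Since Assumption~\ref{CLTass} forces $\hat\theta_T\stackrel p\to\theta_0$, for any $\epsilon>0$ the intermediate points $\tilde\theta_{s,k}$ lie in $\overline{B(\theta_0,\epsilon)}$ with probability tending to one, whence the average is dominated by $\frac1T\int_0^T G_\epsilon(s)\,\mathrm ds$ with $G_\epsilon(s):=\sup_{\theta'\in\overline{B(\theta_0,\epsilon)}}\|\nabla_\theta\lambda^{(k)}_{\theta'}(s)-\nabla_\theta\lambda^{(k)}_{\theta_0}(s)\|$; the ergodic theorem sends this to $\mathbb E[G_\epsilon(0)]$, which vanishes as $\epsilon\downarrow0$ by continuity of $\theta\mapsto\nabla_\theta\lambda^{(k)}_\theta(0)$ and dominated convergence. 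The delicate points are that the intermediate points are random and $s$-dependent, and that applying dominated convergence requires a local integrable envelope for the gradient oscillation on a neighborhood of $\theta_0$ --- available from the stronger domination condition that is sufficient for Assumption~\ref{assB}(vi) (see the Remark following Assumption~\ref{assB}), or from the $C^3$ and second-moment regularity of Assumptions~\ref{MLEass}. Once $\|R_T\|_\infty=o_p(1)$ is established, a final application of Slutsky's theorem absorbs the negligible remainder into the leading term without altering the weak limit, giving \eqref{step2limit} in the $SU$, hence $SJ_1$, topology.
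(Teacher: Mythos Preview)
Your approach is essentially the same as the paper's---Taylor-expand, show the leading term $\Phi_T(\cdot)\sqrt T(\theta_0-\hat\theta_T)$ converges to $(u\alpha_{\theta_0}Z)_u$, and argue the remainder is $o_p(1)$---but the execution differs in two places worth noting. For the uniform convergence $\Phi_T\to(u\mapsto u\alpha_{\theta_0})$, the paper splits $[0,1]$ into a short initial segment (controlled via Markov's inequality, using $\mathbb E\|\partial\lambda^{(k)}_{\theta_0}/\partial\theta\|<\infty$) and a tail segment on which Assumption~\ref{assB}(vi) is invoked directly; your positive/negative-part decomposition plus the P\'olya argument is cleaner and handles the small-$u$ regime automatically. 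For combining $\Phi_T$ with $\sqrt T(\theta_0-\hat\theta_T)$, the paper passes through a Skorokhod representation to upgrade the CLT to convergence in probability on a new probability space, whereas you invoke Slutsky directly in function space---both valid, yours more direct. Finally, your treatment of the remainder $R_T$ is actually more careful than the paper's: the paper simply writes the integrated Taylor remainder as $o_{\mathbb P}(1)$ without justifying the passage from the pointwise-in-$s$ expansion to the time-averaged statement, while you correctly flag that this requires a locally integrable envelope for $D\lambda_\theta$ near $\theta_0$, not strictly contained in Assumptions~\ref{assA}--\ref{CLTass} but available from the Remark following Assumption~\ref{assB} or from Assumptions~\ref{MLEass}.
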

\begin{proof}
Our proof is structured as follows.
Assumptions~\ref{CLTass} grants the CLT $\sqrt T(\theta_0-\hat\theta_T)\stackrel{d}\to Z$. This allows us to perform a Taylor expansion on the integrand of the prelimit from (\ref{step2limit}): $\lambda_{\theta_0}(s)-\lambda_{\hat\theta_T}(s)=D\lambda_{\theta_0}(s)(\theta_0-\hat\theta_T)+o_{\mathbb P}(|\theta_0-\hat\theta_T|)$. We use this to rewrite the prelimit from (\ref{step2limit}) to
\begin{equation}\label{Taylor integral}
\frac1{\sqrt T}\int_0^{uT}\left(\lambda_{\theta_0}(s)-\lambda_{\hat\theta_T}(s)\right)\ \mathrm ds=u\frac1{uT}\int_0^{uT}D\lambda_{\theta_0}(s)\ \mathrm ds\sqrt T(\theta_0-\hat\theta_T)+o_{\mathbb P}(1).
\end{equation}
We prove the result by showing that the $i$th coordinate of the right-hand side (\ref{Taylor integral}) converges uniformly in probability to the $i$th coordinate of the right-hand side of (\ref{step2limit}). From this, convergence in the Skorokhod topology follows. 

Denote the $k$th row of $\alpha_{\theta_0}$ by $\alpha_{\theta_0}^{(k)}$, i.e.,
$$
\alpha_{\theta_0}^{(k)}:=\lim_{T\to\infty}\frac1T\int_0^T\frac{\partial \lambda_\theta^{(k)}(s)}{\partial\theta}\ \mathrm ds\in\mathbb R^m.
$$
We need to show that, for each $k\in[d]$,
\begin{equation}\label{step2limit coordinatewise}
u\frac1{uT}\int_0^{uT}\frac{\partial \lambda_\theta^{(k)}(s)}{\partial\theta}\ \mathrm ds\sqrt T(\theta_0-\hat\theta_T)\to u\alpha_{\theta_0}^{(k)}Z,
\end{equation}
uniformly in probability. 
The second factor $\sqrt T(\theta_0-\hat\theta_T)$ is not dependent on $u$ and converges to $Z$, by Assumptions~\ref{CLTass}, hence uniform convergence in probability $\sqrt T(\theta_0-\hat\theta_T)\to Z$ is immediate. Indeed, by the Skorokhod representation theorem, there exists a version of $\sqrt T(\theta_0-\hat\theta_T)$ which converges \emph{in probability} to a random variable with distribution $Z$. 

For the first factor 
$$
u\frac1{uT}\int_0^{uT}\frac{\partial \lambda_\theta^{(k)}(s)}{\partial\theta}\ \mathrm ds
$$
note that by Assumption \ref{assB}(iii), $\|\partial\lambda_\theta^{(k)}(0)/\partial\theta\|$ has finite expectation, hence by stationarity (Assumption \ref{assA}(ii)), the same holds for $\|\partial\lambda_\theta^{(k)}(s)/\partial\theta\|$, $s\geq0$. In fact, it follows that $\mathbb E\|\partial\lambda_\theta^{(k)}(s)/\partial\theta\|$ is independent of time $s$. By Fubini’s theorem, we infer that
$$
\mathbb E\left[\frac1T\int_0^{uT}\left\|\frac{\partial \lambda_\theta^{(k)}(s)}{\partial\theta}\right\|\ \mathrm ds\right]=\frac1T\int_0^{uT}\mathbb E\left\|\frac{\partial \lambda_\theta^{(k)}(s)}{\partial\theta}\right\|\ \mathrm ds=u\mathbb E\left\|\frac{\partial \lambda_\theta^{(k)}(0)}{\partial\theta}\right\|.
$$
Fix $\delta,\epsilon>0$. For $u\in[0,1]$, $T>0$, an application of Markov’s inequality yields that with probability of at least $1-\delta/2$ it holds for all $u'\in[0,u]$ that 
$$
\frac1T\int_0^{u'T}\left\|\frac{\partial \lambda_\theta^{(k)}(s)}{\partial\theta}\right\|\ \mathrm ds\leq\frac2\delta u\mathbb E\left\|\frac{\partial \lambda_\theta^{(k)}(0)}{\partial\theta}\right\|.
$$
Note that this bound is independent of $T>0$. Hence, selecting
$$
u^{(k)}=\frac\epsilon4\left(\delta\left(\mathbb E\left\|\frac{\partial \lambda_\theta^{(k)}(0)}{\partial\theta}\right\|\right)^{-1}\wedge\frac1{\|\alpha_{\theta_0}\|}\right)
$$
shows that with probability of at least $1-\delta/2$
$$
\sup_{u\in[0,u^{(k)}]}\left\|
\frac1{T}\int_0^{uT}\frac{\partial \lambda_\theta^{(k)}(s)}{\partial\theta}\ \mathrm ds-u\alpha_{\theta_0}^{(k)}
\right\|\leq\epsilon.
$$

Given $u^{(k)}$, we use Assumption~\ref{assB}(vi) to select $T^{(k)}$ such that for $T\geq T^{(k)}$ and $u\in[u^{(k)},1]$ it holds with probability of at least $1-\delta/2$ that
$$
\left\|u\left(\frac1{uT}\int_0^{uT}\frac{\partial \lambda_\theta^{(k)}(s)}{\partial\theta}\ \mathrm ds-\alpha_{\theta_0}^{(k)}\right)
\right\|\leq\epsilon.
$$
We have shown that on an event of probability at least $1-\delta$, for $T\geq T^{(k)}$ it holds that
$$
\sup_{u\in[0,1]}\left\|
\frac1{T}\int_0^{uT}\frac{\partial \lambda_\theta^{(k)}(s)}{\partial\theta}\ \mathrm ds-u\alpha_{\theta_0}^{(k)}
\right\|\leq\epsilon.
$$
In other words, $\frac1{T}\int_0^{uT}\frac{\partial \lambda_\theta^{(k)}(s)}{\partial\theta}\ \mathrm ds$ converges uniformly in probability to $u\alpha_{\theta_0}^{(k)}$.

Next, by a simple union bound similar to the one used in the proof of \cite{vdVaart}, Theorem 2.7(vi), it follows that
$$
\left(\frac1{T}\int_0^{uT}\frac{\partial \lambda_\theta^{(k)}(s)}{\partial\theta}\ \mathrm ds,\sqrt T(\theta_0-\hat\theta_T)\right)\to\left(u\alpha_{\theta_0}^{(k)},Z\right),
$$
uniformly in probability. By an application of the continuous mapping theorem (to the function $g(x,y)=x\cdot y$) it follows that (\ref{step2limit coordinatewise}) holds, uniformly over $u\in[0,1]$, in probability.

We extend to (\ref{step2limit}) by selecting $u^*=\min_{k\in[d]}u^{(k)}$ and $T^*=\max_{k\in[d]}T^{(k)}$. 

Note that uniform convergence in probability amounts to uniform convergence on a set of probability $1-\delta/2$. 
Therefore, conditional on a high probability set, convergence in the Skorokhod $J_1$ topology holds, hence the tightness conditions for the $J_1$ topology hold. 
Conditional on a set of probability $1-\delta/2$, the probabilities appearing in the tightness conditions can be bounded by $\delta/2$. 
Therefore, they can even be bounded by $\delta$. 
Hence, uniform convergence in probability implies convergence in the Skorokhod $J_1$ topology. 
\end{proof}

\begin{remark}
Since the limit $u\mapsto u\alpha_{\theta_0}Z$ is continuous, it follows that \eqref{step2limit} even holds on $D([0,1],\mathbb R^d)$ equipped with the uniform topology.
\end{remark}

Combining Theorem \ref{step1thm} with Theorem \ref{step2thm}, we have the following limit result  for the empirical process $\hat\eta^{(T)}$ defined in (\ref{empiricalprocess}).

\begin{corollary}\label{step1+2}
Grant Assumptions \ref{assA}--\ref{CLTass}. Let $W_{\theta_0}$ be a $d$-variate Wiener process with covariance 
$$u\cdot\mathrm{diag}\left(\mathbb E[N_{\theta_0}^{(1)}[0,1]],\ldots,\mathbb E[N_{\theta_0}^{(d)}[0,1]]\right).$$
Then it holds under the law $\mathcal P_{\theta_0}$ of $N_{\theta_0}\in\mathscr F_\Theta$ that 
\begin{equation}\label{step1+2 eq}
\left(\frac1{\sqrt T}\left(N_{\theta_0}(uT)-\int_0^{uT}\lambda_{\hat\theta_T}(s)\ \mathrm ds\right)\right)_{u\in[0,1]}\stackrel d\longrightarrow (W_{\theta_0}(u)+u\alpha_{\theta_0}Z)_{u\in[0,1]}=:(\hat\eta_{\theta_0}(u))_{u\in[0,1]},
\end{equation}
as $T\to\infty$, weakly on $D([0,1],\mathbb R^d)$ equipped with the strong Skorokhod $J_1$ topology.
\end{corollary}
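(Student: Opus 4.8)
The plan is to read the limit straight off the decomposition \eqref{empiricalprocess decomposition}, writing $\hat\eta^{(T)}=A_T+B_T$ with
\[
A_T(u)=\tfrac1{\sqrt T}\Bigl(N_{\theta_0}(uT)-\int_0^{uT}\lambda_{\theta_0}(s)\,\mathrm ds\Bigr),\qquad B_T(u)=\tfrac1{\sqrt T}\int_0^{uT}\bigl(\lambda_{\theta_0}(s)-\lambda_{\hat\theta_T}(s)\bigr)\,\mathrm ds,
\]
and then to fuse the two established limits through the continuous mapping theorem applied to the addition map $(x,y)\mapsto x+y$ on $D([0,1],\mathbb R^d)$. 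Theorem~\ref{step1thm} supplies $A_T\stackrel{d}\longrightarrow W_{\theta_0}$ and Theorem~\ref{step2thm} supplies $B_T\stackrel{d}\longrightarrow u\alpha_{\theta_0}Z$, both in the strong $J_1$ topology. The subtlety to keep in view is that weak convergence of each summand \emph{separately} does not entail weak convergence of the sum: one needs \emph{joint} weak convergence of the pair $(A_T,B_T)$ together with continuity of addition at the limiting pair. The joint convergence is the main obstacle; the remainder is continuous-mapping bookkeeping.

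The continuity ingredient is the easy half. Since the limit of the second summand, $u\mapsto u\alpha_{\theta_0}Z$, has almost surely continuous sample paths (and $W_{\theta_0}$, being Wiener, likewise), the pair $(W_{\theta_0},u\alpha_{\theta_0}Z)$ almost surely has no common discontinuities, and on that set addition $(D,J_1)\times(D,J_1)\to(D,J_1)$ is continuous at it; this is exactly the point that the Remark following Theorem~\ref{step2thm}, asserting convergence of $B_T$ in the \emph{uniform} topology, is designed to make available. Hence, once joint convergence $(A_T,B_T)\stackrel{d}\longrightarrow(W_{\theta_0},u\alpha_{\theta_0}Z)$ is in hand, the continuous mapping theorem immediately yields $\hat\eta^{(T)}=A_T+B_T\stackrel{d}\longrightarrow W_{\theta_0}+u\alpha_{\theta_0}Z=\hat\eta_{\theta_0}$.

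For the joint convergence I would reduce the pair $(A_T,B_T)$ to the pair $(A_T,\sqrt T(\theta_0-\hat\theta_T))$ in $D([0,1],\mathbb R^d)\times\mathbb R^m$, using that the proof of Theorem~\ref{step2thm} already shows $B_T(u)=u\,\alpha_{\theta_0}\sqrt T(\theta_0-\hat\theta_T)+o_{\mathbb P}(1)$ uniformly in $u$; thus $B_T$ is, up to a uniform $o_{\mathbb P}(1)$, the continuous map $v\mapsto(u\mapsto u\alpha_{\theta_0}v)$ evaluated at the estimation error. Joint tightness of $(A_T,\sqrt T(\theta_0-\hat\theta_T))$ follows from the tightness of $A_T$ furnished by Theorem~\ref{step1thm} together with the convergence, and hence tightness, of $\sqrt T(\theta_0-\hat\theta_T)$ granted by Assumption~\ref{CLTass}; for the joint finite-dimensional distributions I would invoke a joint central limit theorem. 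In the maximum-likelihood case this is clean: the estimation error is asymptotically $I(\theta_0)^{-1}$ times the terminal score, which is itself a terminal functional of the very martingale underlying $A_T$, so that the multivariate martingale FCLT (Lemma~\ref{FCLTmartingales}), applied to the stacked martingale differences, delivers the joint Gaussian limit of $(A_T,\sqrt T(\theta_0-\hat\theta_T))$ in a single step. Having thereby obtained $(A_T,\sqrt T(\theta_0-\hat\theta_T))\stackrel{d}\longrightarrow(W_{\theta_0},Z)$, I would push this through the continuous map above to get $(A_T,B_T)\stackrel{d}\longrightarrow(W_{\theta_0},u\alpha_{\theta_0}Z)$, absorb the uniform $o_{\mathbb P}(1)$ by a converging-together (Slutsky) argument, and close with the continuity of addition. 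The one genuinely delicate point is the joint convergence, precisely because $W_{\theta_0}$ and $Z$ are in general dependent and their joint law cannot be reconstructed from the two marginal limits alone; once that joint law is pinned down, everything downstream is the continuous mapping theorem.
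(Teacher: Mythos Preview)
Your proposal goes well beyond the paper's own treatment, which is literally a single sentence: the corollary is presented as following directly from ``combining Theorem~\ref{step1thm} with Theorem~\ref{step2thm}'', with no discussion of joint convergence. You have correctly put your finger on the delicate point the paper elides: marginal weak convergence of $A_T$ and of $B_T$ does not by itself yield weak convergence of $A_T+B_T$, and the joint law of $(W_{\theta_0},Z)$ is not determined by the two marginal statements alone.

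Your route---reduce to joint convergence of $\bigl(A_T,\sqrt T(\theta_0-\hat\theta_T)\bigr)$ in $D([0,1],\mathbb R^d)\times\mathbb R^m$, then push through the continuous map $(x,v)\mapsto x+(\cdot)\,\alpha_{\theta_0}v$ and absorb the uniform $o_{\mathbb P}(1)$ remainder from the proof of Theorem~\ref{step2thm} by Slutsky---is sound, and your observation that in the MLE case the estimation error is asymptotically $I(\theta_0)^{-1}$ times the terminal score, itself a functional of the same martingale driving $A_T$, so that a single application of Lemma~\ref{FCLTmartingales} to the stacked martingale delivers the joint Gaussian limit, is exactly right. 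One caveat worth flagging: under Assumption~\ref{CLTass} in its stated generality (an abstract estimator CLT, not tied to MLE or to any particular asymptotic representation), the joint distribution of the compensated martingale and the estimation error is genuinely \emph{not} pinned down by the hypotheses, so the law of $\hat\eta_{\theta_0}=W_{\theta_0}+u\alpha_{\theta_0}Z$ is, strictly speaking, underspecified---a gap your argument inherits from the paper rather than creates. This ambiguity is harmless downstream, since the transform $\mathscr T_{\theta_0}$ in Theorem~\ref{step3thm} annihilates $u\mapsto u\alpha_{\theta_0}Z$ identically and the cross-covariances involving the drift vanish in the proof of that theorem regardless of the dependence between $W_{\theta_0}$ and $Z$; but at the level of Corollary~\ref{step1+2} itself, your caution about the joint law is warranted and more careful than the paper.
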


\subsection{Step 3: Transforming $\hat\eta_{\theta_0}$ into a standard Wiener process}\label{sec3.3}
The limiting result (\ref{step1+2 eq}) derived in Step 2 still depends on the true but unknown parameter, through the covariance of $W_{\theta_0}$ and the linear stochastic drift term $u\alpha_{\theta_0}Z$. 
Due to this dependency on the true parameter, we cannot directly use this limit to construct an asymptotically valid testing procedure for all $\theta \in \Theta$, causing the theory to become fragmented. 

To address this issue, we apply a suitable \emph{innovation martingale transform} to the limit process, which results in a new limit process that is independent of the true parameter $\theta_0$ and of the specific family $\mathcal F_\Theta$ we are testing for.  
Variants of innovation martingale transformations have been applied to various statistical problems in the literature over the last couple of decades; see \cite{IMT1, IMT2, IMT3, IMT4, IMT5, IMT6, IMT7, IMT8, IMT9}.  

An innovation martingale transformation has also been applied to processes involving populations of size $n$ over fixed time horizons $[0,T]$, where an asymptotically distribution-free test process has been derived in the limit as $n\to\infty$ \cite{Andersen, SunCounting}. 
However, that large-$n$ setup differs fundamentally from the large-$T$ setup considered in this work. 
In the large-$n$ regime, one considers a system consisting of many interacting particles over a fixed time horizon. The limiting behavior as $n\to\infty$ is typically governed by an LLN or CLT, where randomness is averaged out due to aggregation across the population. The resulting asymptotics are then driven by scaling properties of the population-level process.  
By contrast, the large-$T$ regime concerns a single realization of a process evolving over an increasingly long time horizon. Here, the asymptotics are dictated by the temporal structure of the process rather than by population averaging. Growth in $T$ does not inherently reduce stochasticity but instead reveals long-term dependencies and fluctuation behavior that are absent in the large-$n$ setting. As a result, techniques that rely on concentration due to population size do not carry over directly, and different probabilistic tools are required to understand the limiting behavior.

\begin{theorem}\label{step3thm}
Grant Assumptions \ref{assA}--\ref{CLTass}. Let $\mu_{\theta_0}^{(k)}:=\mathbb E[N_{\theta_0}^{(k)}[0,1]]$. For $\theta\in\Theta$, let $\mathscr T_\theta:D([0,1],\mathbb R^d)\to D([0,1],\mathbb R^d)$ be the transformation
\begin{equation}\label{transformation eq}
\mathscr T_\theta(\eta)(B):=\left\{\frac1{\sqrt{\mu_{\theta}^{(k)}}}\left(\eta^{(k)}(B)-\int_B\frac{\eta^{(k)}(1)-\eta^{(k)}(v)}{1-v}\ \mathrm dv\right)\right\}_{k\in[d]},\quad B\in\mathcal B[0,1].
\end{equation}
Then it holds under the law $\mathcal P_{\theta_0}$ of $N_{\theta_0}\in\mathscr F_\Theta$ that  $\mathscr T_{\theta_0}(\hat\eta_{\theta_0})$ is a standard $d$-dimensional Wiener process, where $\hat\eta_{\theta_0}$ is defined in (\ref{step1+2 eq}).
\end{theorem}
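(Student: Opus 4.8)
The plan is to exploit the linearity of $\mathscr T_{\theta_0}$ together with the explicit additive structure $\hat\eta_{\theta_0}(u)=W_{\theta_0}(u)+u\alpha_{\theta_0}Z$ from Corollary~\ref{step1+2}, splitting the argument into two parts: (i) showing that $\mathscr T_{\theta_0}$ annihilates the random linear drift $u\alpha_{\theta_0}Z$, and (ii) showing that it maps the Gaussian part $W_{\theta_0}$ to a standard Wiener process. Since the covariance of $W_{\theta_0}$ is diagonal and $\mathscr T_{\theta_0}$ acts coordinatewise, the whole analysis reduces to a scalar statement for each $k\in[d]$, with cross-coordinate independence recovered at the end from the independence of the coordinates $W_{\theta_0}^{(k)}$. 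Throughout I read $\mathscr T_{\theta_0}(\hat\eta_{\theta_0})$ as the process $u\mapsto\mathscr T_{\theta_0}(\hat\eta_{\theta_0})([0,u])$.

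First I would dispose of the drift. Writing $\beta_k:=(\alpha_{\theta_0}Z)^{(k)}$, the drift coordinate evaluated on $[0,v]$ equals $v\beta_k$, so the integrand $\tfrac{\beta_k-v\beta_k}{1-v}=\beta_k$ is constant and $\int_0^u\beta_k\,\mathrm dv=u\beta_k$ cancels the term $u\beta_k$ exactly. Hence $\mathscr T_{\theta_0}$ kills the drift and
\[
\mathscr T_{\theta_0}(\hat\eta_{\theta_0})^{(k)}([0,u])=\frac1{\sqrt{\mu_{\theta_0}^{(k)}}}\left(W_{\theta_0}^{(k)}(u)-\int_0^u\frac{W_{\theta_0}^{(k)}(1)-W_{\theta_0}^{(k)}(v)}{1-v}\,\mathrm dv\right).
\]
Setting $B^{(k)}(u):=W_{\theta_0}^{(k)}(u)/\sqrt{\mu_{\theta_0}^{(k)}}$, which is a standard scalar Brownian motion because $W_{\theta_0}^{(k)}$ has variance $u\mu_{\theta_0}^{(k)}$, the claim reduces to proving that
\[
\tilde B^{(k)}(u):=B^{(k)}(u)-\int_0^u\frac{B^{(k)}(1)-B^{(k)}(v)}{1-v}\,\mathrm dv
\]
is a standard Brownian motion on $[0,1]$.

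To prove this I would first check that the integral is well defined: near $v=1$ the increment $B^{(k)}(1)-B^{(k)}(v)$ has standard deviation of order $(1-v)^{1/2}$, so the integrand is of size $(1-v)^{-1/2}$, which is a.s.\ integrable on $[0,1]$; in particular $\tilde B^{(k)}$ has continuous paths. Since $\tilde B^{(k)}$ is a continuous linear functional of the Gaussian process $B^{(k)}$, it is itself a centered Gaussian process, so it suffices to identify its covariance. The main computational step is to verify, using $\mathbb E[B^{(k)}(a)B^{(k)}(b)]=a\wedge b$ and Fubini, that $\mathbb E[\tilde B^{(k)}(s)\tilde B^{(k)}(u)]=s\wedge u$. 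Expanding the product into four terms and using the identity $\mathbb E[(B^{(k)}(1)-B^{(k)}(v))(B^{(k)}(1)-B^{(k)}(w))]=1-(v\vee w)$, each term evaluates to an elementary integral whose only nonpolynomial contribution is a multiple of $\ln(1-s)$, and these logarithmic contributions cancel exactly, leaving $s\wedge u$. This cancellation is the crux of the argument and is precisely the defining property of Khmaladze's innovation transform; equivalently, $\tilde B^{(k)}$ is the driving (innovation) Brownian motion in the decomposition of $B^{(k)}$ under the filtration enlarged by its terminal value $B^{(k)}(1)$, which yields the same conclusion via L\'evy's characterization, since the subtracted finite-variation term leaves $\langle\tilde B^{(k)}\rangle_u=u$.

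Finally I would assemble the multivariate statement. The coordinates $W_{\theta_0}^{(k)}$ are independent because $W_{\theta_0}$ has diagonal covariance, and $\mathscr T_{\theta_0}$ acts separately on each coordinate; hence $(\tilde B^{(1)},\ldots,\tilde B^{(d)})$ is jointly Gaussian with $\mathbb E[\tilde B^{(k)}(s)\tilde B^{(l)}(u)]=0$ for $k\neq l$, so the coordinates are independent standard Brownian motions. Combined with the scalar covariance identity and the path continuity established above, this shows that $\mathscr T_{\theta_0}(\hat\eta_{\theta_0})$ is a standard $d$-dimensional Wiener process under $\mathcal P_{\theta_0}$. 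I expect the covariance cancellation in the scalar step to be the only genuine obstacle; the drift annihilation, the reduction to scalars, and the independence of the coordinates all follow immediately from linearity and the diagonal covariance structure.
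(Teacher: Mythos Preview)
Your proof is correct and essentially complete. The structure---annihilate the drift by linearity, reduce coordinatewise to a standard scalar Brownian motion, verify that $\tilde B^{(k)}(u)=B^{(k)}(u)-\int_0^u\frac{B^{(k)}(1)-B^{(k)}(v)}{1-v}\,\mathrm dv$ is standard Brownian, and then recover joint independence from the diagonal covariance of $W_{\theta_0}$---is sound, and your integrability check near $v=1$ is the right way to make sense of the transform at the endpoint.

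The paper takes a slightly different route. Rather than computing the covariance of $\tilde B^{(k)}$ directly (or invoking the enlargement-of-filtration interpretation you mention), it identifies each coordinate of $\hat\eta_{\theta_0}$ with the setup of \cite{Can15}, Theorem~3.1, by specializing to the scanning family $A_v=[0,v)$, $R(u)=\mu_{\theta_0}^{(k)}u$, $q\equiv 1/\mu_{\theta_0}^{(k)}$, and reads off that \eqref{Can eq} is a Wiener process of variance $\mu_{\theta_0}^{(k)}u$ as a black-box consequence. For the cross-covariances, the paper expands $C_{ij}(u)$ into four terms using bilinearity in $\eta$ and the decomposition $\hat\eta_{\theta_0}=W_{\theta_0}+u\alpha_{\theta_0}Z$, then observes that $\mathscr T_{\theta_0}^{(i)}(u\alpha_{\theta_0}^{(k)}Z)\equiv 0$ kills three of them and independence of $W_{\theta_0}^{(i)},W_{\theta_0}^{(j)}$ kills the fourth. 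Your argument is more self-contained and arguably cleaner: you dispose of the drift up front, so independence of the transformed coordinates is immediate from the coordinatewise action on independent inputs, with no need for the four-term expansion. The paper's version is shorter on the page only because the scalar covariance identity---the ``crux'' you correctly flag---is outsourced to \cite{Can15}.
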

\begin{proof}
Each coordinate of the limit $\hat\eta_{\theta_0}$ found in Corollary \ref{step1+2} is of the general form treated in \cite{Can15}, Section 3. Fix a coordinate $k\in[d]$. 
We can specify \cite{Can15}, Theorem 3.1, to univariate time frames by taking functions independent of the second coordinate. 
We identify the objects appearing in that theorem. 
We take the scanning family $A_v=[0,v)$ on $[0,1]$. 
We have $R(u)=\mu_{\theta_0}^{(k)}u$ and $$Q(B)=\int_B\ \mathrm dQ(u)=\int_B\ \mathrm du=\mathrm{Leb}(B)=\int_Bq(u)\ \mathrm dR(u)=\mu_{\theta_0}^{(k)}\int_Bq(u)\ \mathrm du,$$ hence $q(u)=1/\mu_{\theta_0}^{(k)}$. 
Next, we have 
$$
I(A_v^c)=\int_{A_v^c}q(u)^2\ \mathrm dR(u)=\int_v^1\frac1{(\mu_{\theta_0}^{(k)})^2}\mu_{\theta_0}\ \mathrm du=\frac{1-v}{\mu_{\theta_0}^{(k)}}.
$$
Note that $Q(B\cap A_{\mathrm dv})=Q(B\cap\{\mathrm dv\})=\mathbf1\{v\in B\}\ \mathrm dv$. Then \cite{Can15}, Theorem 3.1, gives that
\begin{equation}\label{Can eq}
\eta_{\theta_0}^{(k)}(B)-\int_B\frac{\eta_{\theta_0}^{(k)}(1)-\eta_{\theta_0}^{(k)}(v)}{1-v}\ \mathrm dv
\end{equation} is a Wiener process with variance $\mu_{\theta_0}^{(k)}u$, implying that $\mathscr T_{\theta_0}(\hat\eta_{\theta_0})$ is a standard $d$-dimensional Wiener process. 

Indeed, $\mathscr T_{\theta_0}(\hat\eta_{\theta_0})$ is a $d$-dimensional Gaussian process with zero mean, hence its probabilistic behavior is characterized by its covariance structure $C_{ij}(u):=\mathrm{Cov}\left(\mathscr T_{\theta_0}(\hat\eta_{\theta_0}))_i(u),(\mathscr T_{\theta_0}(\hat\eta_{\theta_0}))_j(u)\right)$, $i,j\in[d]$, $u\in[0,1]$. 
Using linearity of $\mathscr T_{\theta_0}^{(i)}$ and $\mathscr T_{\theta_0}^{(j)}$ in $\eta$ together with the definition on $\hat\eta_{\theta_0}$ given in (\ref{step1+2 eq}), we expand
\begin{align*}
C_{ij}(u)&=\mathrm{Cov}\left(\mathscr T_{\theta_0}^{(i)}(W^{(i)}_{\theta_0}(u)),\mathscr T_{\theta_0}^{(j)}(W^{(j)}_{\theta_0}(u))\right)+\mathrm{Cov}\left(\mathscr T_{\theta_0}^{(i)}(W^{(i)}_{\theta_0}(u)),\mathscr T_{\theta_0}^{(j)}(u\alpha_{\theta_0}^{(j)}Z)\right)\\
&{\phantom=}\ +\mathrm{Cov}\left(\mathscr T_{\theta_0}^{(i)}(u\alpha_{\theta_0}^{(i)}Z),\mathscr T_{\theta_0}^{(j)}(W^{(j)}_{\theta_0}(u))\right)+\mathrm{Cov}\left(\mathscr T_{\theta_0}^{(i)}(u\alpha_{\theta_0}^{(i)}Z),\mathscr T_{\theta_0}^{(j)}(u\alpha_{\theta_0}^{(j)}Z))\right).
\end{align*}
For $i\neq j$, the first term vanishes because of independence between $W^{(i)}_{\theta_0}$ and $W^{(j)}_{\theta_0}$, while the remaining three terms vanish because $\mathscr T_{\theta_0}^{(i)}(u\alpha_{\theta_0}^{(k)}Z)\equiv0$ for each $k\in[d]$.
Therefore, $C_{ij}(u)\equiv0$.
\end{proof}

\subsection{Step 4: Replacing $\theta_0$ by $\hat\theta_T$}\label{sec3.4}
Although the limit process from Theorem~\ref{step3thm} does not depend on the true parameter, the transformation $\mathscr T_{\theta_0}$ and the process $(\hat\eta_{\theta_0})$ still do. Hence, in practical applications $\mathscr T_{\theta_0}(\hat\eta_{\theta_0})$ cannot be used. Instead, the best one could do is to replace it by $\mathscr T_{\hat\theta_T}(\hat\eta^{(T)})$ where $\hat\eta^{(T)}$ is the empirical process defined in (\ref{empiricalprocess}). Here,
we show that the difference between $\mathscr T_{\hat\theta_T}(\hat\eta^{(T)})$ and $\mathscr T_{\theta_0}(\hat\eta_{\theta_0})$ vanishes in the limit  as $T\to\infty$. Since $\mathscr T_{\theta_0}(\hat\eta_{\theta_0})$ is a standard $d$-dimensional Wiener process, this provides us with an asymptotically distribution-free limit
process. Also, the quantity $\mathscr T_{\hat\theta_T}(\hat\eta^{(T)})$  can be calculated explicitly without knowledge of $\theta_0$.

\begin{theorem}\label{step4thm}
Grant Assumptions~\ref{assA}--\ref{CLTass}. 
Select some $\tau\in(0,1)$. Then under the law $\mathcal P_{\theta_0}$ of $N_{\theta_0}\in\mathscr F_\Theta$, $\mathscr T_{\hat\theta_T}(\hat\eta^{(T)})$ converges weakly on $D([0,\tau],\mathbb R^d)$ equipped with the strong Skorokhod $J_1$-topology to a standard Wiener process, as $T\to\infty$.
\end{theorem}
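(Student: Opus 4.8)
The plan is to exploit that the transformation $\mathscr T_\theta$ depends on $\theta$ only through the scalar factors $(\mu_\theta^{(k)})^{-1/2}$, $k\in[d]$. Writing $L$ for the $\theta$-free linear operator
\[
L(\eta)^{(k)}(B):=\eta^{(k)}(B)-\int_B\frac{\eta^{(k)}(1)-\eta^{(k)}(v)}{1-v}\ \mathrm dv,\qquad B\in\mathcal B[0,1],
\]
we have $\mathscr T_\theta(\eta)^{(k)}=(\mu_\theta^{(k)})^{-1/2}L(\eta)^{(k)}$. Thus it suffices to prove (i) that $L(\hat\eta^{(T)})\to L(\hat\eta_{\theta_0})$ weakly on $D([0,\tau],\mathbb R^d)$, and (ii) that the random scalars $(\mu_{\hat\theta_T}^{(k)})^{-1/2}$ converge in probability to the constants $(\mu_{\theta_0}^{(k)})^{-1/2}$, and then to combine the two by a Slutsky-type argument. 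Since Theorem~\ref{step3thm} identifies $\mathscr T_{\theta_0}(\hat\eta_{\theta_0})=\{(\mu_{\theta_0}^{(k)})^{-1/2}L(\hat\eta_{\theta_0})^{(k)}\}_k$ as a standard Wiener process, this yields the claim.

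For (i), the key point --- and the reason for restricting to $[0,\tau]$ with $\tau<1$ --- is that on $B\subset[0,\tau]$ the kernel $1/(1-v)$ is bounded by $(1-\tau)^{-1}$. A direct estimate then shows that $L$ is Lipschitz from $(D([0,1],\mathbb R^d),\|\cdot\|_\infty)$ into $(D([0,\tau],\mathbb R^d),\|\cdot\|_\infty)$, with constant depending only on $\tau$; here the endpoint evaluation $\eta^{(k)}(1)$ is harmless, as it is $\|\cdot\|_\infty$-continuous. Corollary~\ref{step1+2} gives $\hat\eta^{(T)}\to\hat\eta_{\theta_0}$ weakly on $D([0,1],\mathbb R^d)$ in the strong $J_1$ topology, and because the limit $\hat\eta_{\theta_0}=W_{\theta_0}+u\alpha_{\theta_0}Z$ is a.s.\ continuous, this convergence is in fact uniform (on the subspace of continuous functions the $J_1$ and uniform topologies coincide; cf.\ \cite{Whitt}). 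The continuous mapping theorem applied to the continuous map $L$ then delivers (i), with the limit again a.s.\ continuous.

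For (ii), I would first note $\mu_\theta^{(k)}=\mathbb E[N_\theta^{(k)}[0,1]]=\mathbb E[\lambda_\theta^{(k)}(0)]$ by the compensator identity and stationarity (Assumption~\ref{assA}(ii)). Assumption~\ref{assB}(iii) gives $\theta\mapsto\lambda_\theta^{(k)}(0)$ a.s.\ continuous, and Assumption~\ref{assB}(v) supplies an $L^1(\mathbb P)$ dominating function on a neighborhood of $\theta_0$, so dominated convergence makes $\theta\mapsto\mu_\theta^{(k)}$ continuous there; Assumption~\ref{assB}(iv) ensures $\mu_{\theta_0}^{(k)}>0$. Assumption~\ref{CLTass} yields $\sqrt T(\theta_0-\hat\theta_T)\stackrel d\to Z$ and hence consistency $\hat\theta_T\stackrel{\mathbb P}\to\theta_0$; composing with the continuous, positive map $\theta\mapsto(\mu_\theta^{(k)})^{-1/2}$ gives (ii).

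It remains to combine (i) and (ii). Because the same estimator $\hat\theta_T$ enters both $\hat\eta^{(T)}$ and the scalars $(\mu_{\hat\theta_T}^{(k)})^{-1/2}$, I cannot appeal to independence; instead I use that a sequence converging in distribution together with a sequence converging in probability to a \emph{constant} converge jointly. Thus $\big(L(\hat\eta^{(T)}),\big((\mu_{\hat\theta_T}^{(k)})^{-1/2}\big)_{k}\big)\to\big(L(\hat\eta_{\theta_0}),\big((\mu_{\theta_0}^{(k)})^{-1/2}\big)_{k}\big)$ weakly on $D([0,\tau],\mathbb R^d)\times\mathbb R^d$, and applying the continuous mapping theorem to coordinatewise scalar multiplication gives $\mathscr T_{\hat\theta_T}(\hat\eta^{(T)})\to\mathscr T_{\theta_0}(\hat\eta_{\theta_0})$, a standard Wiener process. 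I expect the main obstacle to be (i): verifying continuity of the innovation transform in the relevant topology and, in particular, justifying that the strong $J_1$ convergence of Corollary~\ref{step1+2} may be upgraded to uniform convergence, so that the integral operator --- unbounded on all of $[0,1]$ --- can be controlled on $[0,\tau]$.
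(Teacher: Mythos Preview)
Your proposal is correct and follows essentially the same route as the paper: both split $\mathscr T_\theta$ into the $\theta$-free operator $L$ (the paper's $\tilde{\mathscr T}$) and the scalar factors $(\mu_\theta^{(k)})^{-1/2}$, prove $\mu_{\hat\theta_T}^{(k)}\to\mu_{\theta_0}^{(k)}$ via dominated convergence, and control $L$ using boundedness of $(1-v)^{-1}$ on $[0,\tau]$. The only cosmetic difference is packaging: the paper invokes the Skorokhod representation theorem to upgrade Corollary~\ref{step1+2} to a.s.\ uniform convergence and then bounds the integral term directly, whereas you phrase the same step as ``$L$ is Lipschitz in $\|\cdot\|_\infty$, the limit is continuous so $J_1$ convergence becomes uniform, apply the continuous mapping theorem''; these are equivalent, and your Slutsky argument for combining (i) and (ii) is exactly what the paper does implicitly.
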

\begin{proof}
The proof consists of two parts. First, we show that $\mu_{\hat\theta_T}\to\mu_{\theta_0}$ as $T\to\infty$. Second, we show that
$$
\tilde{\mathscr T}:=\mathrm{diag}\left(\sqrt{\mu_{\theta_0}^{(1)}},\ldots,\sqrt{\mu_{\theta_0}^{(d)}}\right)\mathscr T_\theta
$$ satisfies $\tilde{\mathscr T}(\hat\eta^{(T)})\to\tilde{\mathscr T}(\hat\eta_{\theta_0})$ uniformly on $[0,\tau]\ni u$ in probability as $T\to\infty$; note that $\tilde{\mathscr T}$ could also be defined through the formula
$$
\tilde{\mathscr T}(\eta)(B):=\left\{\eta^{(k)}(B)-\int_B\frac{\eta^{(k)}(1)-\eta^{(k)}(v)}{1-v}\ \mathrm dv\right\}_{k\in[d]},\quad B\in\mathcal B[0,1].
$$

For the first part, by stationarity (Assumption \ref{assA}(ii)) it holds that $\mu_{\theta_0}^{(k)}=\mathbb E[\lambda_\theta^{(k)}(0)]$. By Assumption \ref{CLTass}, it holds that $\hat\theta_T\to\theta_0$ in probability, as $T\to\infty$. By passing to a subsequence, this convergence can be taken a.s. By Assumption \ref{assB}(iii) and the continuous mapping theorem, $\lambda_{\hat\theta_T}^{(k)}(0)\to\lambda_{\theta_0}^{(k)}(0)$ a.s., as $T\to\infty$. By Assumption \ref{assB}(v) and dominated convergence, it follows that $\mu_{\hat\theta_T}^{(k)}\to\mu_{\theta_0}^{(k)}$  as $T\to\infty$.

By Assumption \ref{assB}(iii), it holds that $\mu_{\theta_0}^{(k)}>0$, hence to prove the theorem it suffices to prove the second part. 
To this end, we apply the Skorokhod representation theorem to the result of Corollary~\ref{step1+2} to obtain a probability space supporting probabilistic equivalent versions of $\hat\eta^{(T)}$ and $\hat\eta_{\theta_0}$ satisfying
\begin{equation}\label{Skorokhod rep}
\sup_{u\in[0,\tau]}\left\|\hat\eta^{(T)}(u)-\hat\eta_{\theta_0}(u)\right\|_\infty\to0\quad\text{a.s.}
\end{equation}
We will work on this probability space. 

To prove that $\tilde{\mathscr T}(\hat\eta^{(T)})\to\tilde{\mathscr T}(\hat\eta_{\theta_0})$ uniformly on $[0,\tau]\ni u$ in probability, it suffices to prove for $k\in[d]$ that
\begin{equation}\label{step4 ucp}
\sup_{u\in[0,\tau]}\left|\int_0^u\frac{\Delta^{(T)}_k(1)-\Delta^{(T)}_k(v)}{1-v}\ \mathrm dv\right|\to0
\end{equation}
in probability, as $T\to\infty$, i.e., to prove ucp convergence. Here $\Delta^{(T)}:=\hat\eta^{(T)}-\hat\eta_{\theta_0}$ denotes the difference between the empirical process from Step 2 and its limit. Since $\frac1{1-v}$ is bounded on $[0,\tau]$, we simply infer the convergence from (\ref{step4 ucp}) from (\ref{Skorokhod rep}). 
\end{proof}

\section{Goodness-of-fit testing procedures}\label{SectionGoF}
\subsection{Asymptotically correct goodness-of-fit test based on the test process}\label{Section4.1}
Given a sample $\omega_T$ on $[0, T]$ of a point process $T$, we are interested in testing the parametric hypothesis
\begin{equation}\label{H_0 eq2}
H_0:N\stackrel{d}=N_\theta\text{ for some }\theta\in\Theta.
\end{equation}
In a practical situation, the sample $\omega_T$ would be used to calculate an estimator $\hat\theta_T$ for $\theta$, after which one asks whether $\mathscr F_\Theta=\{N_\theta:\theta\in\Theta\}$ is indeed a suitable model for the data. 
To answer this question, we use Theorem \ref{step4thm} to construct a goodness-of-fit test for testing the null hypothesis $H_0$ given in (\ref{H_0 eq2}). 
The ideas of this subsection will be implemented in Sections~\ref{Section5.1}--\ref{Section5.2}.
In Section \ref{Section4.2}, we consider consistency under alternatives for this goodness-of-fit test.

In Section \ref{Section3}, we proved in Theorem~\ref{step4thm} that for any $\tau\in(0,1)$, the process $\mathscr T_{\hat\theta_T}(\hat\eta^{(T)})$ converges weakly on $D([0,\tau],\mathbb R^d)$ to a standard Wiener process $W$. 
The process $\mathscr T_{\hat\theta_T}(\hat\eta^{(T)})$ only depends on the sample $\omega_T$. 
Therefore, it can be calculated explicitly. 
Now suppose $\mathscr K$ is a test statistic depending continuously on $\mathbb R^d$-valued stochastic processes on $[0,\tau]$. 
Then $\mathscr K(\mathscr T_{\hat\theta_T}(\hat\eta^{(T)}))$ converges in distribution to $\mathscr K(W)$.

The limit process $W$ from Theorem~\ref{step4thm} is a standard Wiener process on $[0,\tau]$. In particular, for any $n\in\mathbb N$, the random variables
\begin{equation}\label{rv Z}
Z_i:=\sqrt{\frac n\tau}\left(W(i\tau/n)-W((i-1)\tau/n)\right),\quad i\in[n],
\end{equation}
compose an i.i.d.\ sample of size $n$ from the $d$-dimensional standard normal distribution.

Suppose that one replaces $Z_i$ by $\hat Z^{(T)}_i$ based on $\hat W^{(T)}:=\mathscr T_{\hat\theta_T}(\hat\eta^{(T)})$ instead of $W$: writing $n\equiv n(T)$, we set
\begin{equation}\label{rv hatZ}
\hat Z^{(T)}_i:=\sqrt{\frac n\tau}\left(\hat W^{(T)}(i\tau/n)-\hat W^{(T)}((i-1)\tau/n)\right),\quad i\in[n].
\end{equation}
By the preceeding arguments, it follows that $(\hat Z^{(T)}_i)_{i\in[n]}$ converges to a sample of standard normal random variables, as $T\to\infty$ in such a way that $T/n(T)\to\infty$. Hence, replacing parametric null (\ref{H_0 eq2}) by
\begin{equation}\label{H_0 prime}
H_0^{(T)}: (\hat Z^{(T)}_1,\ldots,\hat Z^{(T)}_{n(T)}) \text{ is an i.i.d.\ sample from the }\mathcal N(0,I_d)\text{-distribution},
\end{equation}
where $I_d$ denotes the $d\times d$ identity matrix, leads to aymptotically correct tests, i.e., tests having the correct rejection rates under $H_0$, asymptotically, as $T\to\infty$. The hypothesis (\ref{H_0 prime}) can be evaluated using any normality test: well-known examples include (multivariate versions of) the Kolmogorov-Smirnov, Anderson-Darling and Cramér-von Mises tests.

In order to obtain high power under alternatives, we want to take a large sample size $n=n(T)$. However, for fixed $T$, a lower value of this hyperparameter $n$ amounts to more data being used to calculate $\hat Z_i^{(T)}$, hence better approximations to standard normals, yielding a more robust testing procedure. In other words, there is a trade-off between small and large $n$. One way to find a middle ground is by taking $n=\mathrm{ceil}(c\sqrt T)$ for some $c>0$, where $\mathrm{ceil}(\cdot)$ denotes the ceiling function. A lower choice of $c$ prioritizes the robustness of the testing procedure, while a higher $c$ prioritizes power.
For this choice of the hyperparameter $n$, 
given a sample $\omega_T$ of a point process $N$ on $[0,T]$, one may test (\ref{H_0 eq2}) using Algorithm \ref{alg1}.
\begin{algorithm}
    \caption{Asymptotically correct goodness-of-fit test based on transformed empirical process}\label{alg1}
    \begin{algorithmic}
        \State \textbf{Input:} A realization $\omega_T$ of a point process $N$ on $[0,T]$ and hyperparameters $\tau\in(0,1)$ and $n=n(T)$; e.g., use $n=\mathrm{ceil}(c\sqrt T)$ for some $c>0$
        \State \textbf{Output:} Outcome of a goodness-of-fit test
        
        \State \textit{(i)} Estimate $\theta$ using an estimator $\hat\theta_T$ satisfying Assumption \ref{CLTass}; e.g., use maximum likelihood estimation
        
        \State \textit{(ii)} Compute the compensated empirical process $\hat\eta^{(T)}$:
        \begin{equation*}
            \hat\eta^{(T)}: u \mapsto \frac{1}{\sqrt{T}} \left(N(uT) - \int_0^{uT} \lambda_{\hat\theta_T}(s)\, \mathrm{d}s \right)
        \end{equation*}
        
        \State \textit{(iii)} For $\mu_{\hat\theta_T}^{(k)}:=\mathbb E[N_{\hat\theta_T}^{(k)}[0,1]]\approx N^{(k)}(T)/T$, compute the transformed process $\hat W^{(T)} := \mathscr{T}_{\hat\theta_T}(\hat\eta^{(T)})$ using
        \begin{equation*}
            \mathscr{T}_{\hat\theta_T}(\hat\eta^{(T)})(u) := \left\{ \frac{1}{\sqrt{\mu_{\hat\theta_T}^{(k)}}} \left((\hat\eta^{(T)})^{(k)}(u) - \int_0^u \frac{(\hat\eta^{(T)})^{(k)}(1) - (\hat\eta^{(T)})^{(k)}(v)}{1 - v}\, \mathrm{d}v \right) \right\}_{k \in [d]}
        \end{equation*}
        
        \State \textit{(iv)} For $i \in [n]$, compute:
        \begin{equation*}
            \hat Z^{(T)}_i := \sqrt{\frac{n}{\tau}} \left(\hat W^{(T)}(i\tau/n) - \hat W^{(T)}((i-1)\tau/n)\right)
        \end{equation*}
        
        \State \textit{(v)} Perform an (asymptotically) exact normality test on the sample $(\hat Z^{(T)}_i)_{i\in[n]}$; e.g., use  the Kolmogorov-Smirnov, Anderson-Darling or Cramér-von Mises test
    \end{algorithmic}
\end{algorithm}

In the present situation, a ``naive" testing procedure ignoring estimation uncertainty can be composed by simply treating the (standardized) empirical process 
\begin{equation}\label{tilde W}
\tilde W^{(T)}:=\mathrm{diag}\left(\sqrt{\mu_{\theta_0}^{(1)}},\ldots,\sqrt{\mu_{\theta_0}^{(d)}}\right)^{-1}\hat\eta^{(T)}
\end{equation}
as a standard Wiener process, e.g., by testing the random variables
 \begin{equation}\label{rv tildeZ}
\tilde Z^{(T)}_i:=\sqrt{\frac n\tau}\left(\tilde W^{(T)}(i\tau/n)-\tilde W^{(T)}((i-1)\tau/n)\right),\quad i\in[n].
\end{equation}
for normality. This amounts to leaving out step (iii) of Algorithm \ref{alg1}. In Section \ref{Section5.1}, we will compare this ``naive" testing procedure empirically to our asymptotically correct testing procedure, and we show empirically that it is generally a bad idea to ignore the estimation uncertainty $\hat\theta_T\neq\theta_0$.

\begin{remark} 
We know from Corollary \ref{step1+2} that $\tilde W^{(T)}$ converges to a Wiener process plus a random linear drift term. The magnitude of the bias of $\tilde Z_i^{(T)}$ caused by this drift is then of order $1/\sqrt n$. 
Since typical test statistics based on a functional of the empirical process --- such as the Kolmogorov-Smirnov, Anderson-Darling and Cramér-von Mises tests ---  converge at rate $\sqrt {n(T)}\asymp\sqrt T$, the bias of the ``naive" testing procedure cannot be mitigated through a choice of the hyperparameter $n$.
\end{remark}

\subsection{Random-time-change-based goodness-of-fit testing}\label{sectionRTC}
Typically, when goodness-of-fit tests are conducted in the context of univariate point processes, one considers transformed event times $\Lambda_{\hat\theta_T}(t_i)$, where $\Lambda_\theta(t)=\int_0^t\lambda_\theta(t) \ \mathrm dt$ is the \emph{compensator} of the point process, i.e., the time-integrated conditional intensity. 
By the random time change theorem \cite{DaleyVereJones}, Theorem 7.4.I ,  under $\mathcal P_{\theta_0}$ the times $\Lambda_{\theta_0}(t_i)$ are distributed according to a unit Poisson process. Hence, a goodness-of-fit test is performed by testing whether 
$\Lambda_{\hat\theta_T}(t_{i})-\Lambda_{\hat\theta_T}(t_{i-1})$ are standard exponential.\footnote{By a multivariate version of the random time change theorem, \cite{DaleyVereJones}, Theorem 14.6.IV, this testing procedure allows for a multivariate generalization.} However, the estimation uncertainty $\hat\theta_T\neq\theta_0$ leading to a difference between $\Lambda_{\hat\theta_T}$ and $\Lambda_{\theta_0}$ is typically ignored; see \cite{RTC9, RTC5, RTC4, RTC2, RTC6, RTC1, RTC3, RTC7, RTC8}.
 Even though the goodness-of-fit test outlined in Algorithm \ref{alg1} relies on different ideas than this test based on the random time change theorem, it is, of course, possible to compare the outcomes of both tests.

We now consider the effect of estimation uncertainty.
By Theorem \ref{step2thm} with $u=t_i/T=\mathcal O(1/T)$, holding for fixed $t_i$, it follows that, as $T\to\infty$,
\begin{equation}\label{diff transformed interarrival times}
\Lambda_{\hat\theta_T}(t_{i})-\Lambda_{\hat\theta_T}(t_{i-1})= \Lambda_{\theta_0}(t_{i})-\Lambda_{\theta_0}(t_{i-1})-\frac1{\sqrt T}(t_{i}-t_{i-1})\alpha_{\theta_0}Z+o_{\mathbb P}(1/\sqrt T).
\end{equation}
By the random time change theorem, this means that the transformed interarrival times $\Lambda_{\hat\theta_T}(t_{i})-\Lambda_{\hat\theta_T}(t_{i-1})$ approximately equal standard exponential random variables plus some bias term proportial to the interarrival time $t_{i}-t_{i-1}$. This bias term leads to an incorrect goodness-of-fit testing procedure, assessing whether the times $(\Lambda_{\hat\theta_T}(t_i))$ compose a realization of a unit Poisson process. In fact, when one uses a test statistic that is a functional of the empirical process --- such as the Kolmogorov-Smirnov, Anderson-Darling and Cramér-von Mises tests --- to test the transformed interarrival times, the convergence of the tests statistics occurs at rate $\sqrt {n(T)}\asymp\sqrt T$. Therefore, the deviations $\frac1{\sqrt T}(t_{i}-t_{i-1})\alpha_{\theta_0}Z+o_{\mathbb P}(1/\sqrt T)=\mathcal O_{\mathbb P}(1/\sqrt T)$ in (\ref{diff transformed interarrival times}) from standard exponentials are in general \emph{not} negligible. 

If a consistent estimate of $\alpha_{\theta_0}Z$ could be obtained from the data, one could modify (\ref{diff transformed interarrival times}) to develop an asymptotically distribution-free goodness-of-fit test based on the random time change theorem. However, this is not a straightforward task.

\section{Consistency under $H_1$ of the goodness-of-fit test based on the test process}\label{Section4.2}
In step (iii) of Algorithm \ref{alg1}, we transform the compensated empirical process $\hat\eta^{(T)}$ to $\hat W^{(T)}:=\mathscr T_{\hat\theta_T}(\hat\eta^{(T)})$ using the innovation martingale transform defined in (\ref{empiricalprocess}). This transformed process converges to a process independent of the true parameter $\theta_0$, as well as of the model $\mathscr F_\Theta$, hence leads to asymptotically correct goodness-of-fit tests. However, we notice that due to the transformation information may be lost, see \cite{Estate81}. In other words, it may be possible that a test based on the transformed process $\hat W^{(T)}$ is incapable of detecting certain deviations from the null that other tests might be able to detect. In this subsection, we discuss the consistency under $H_1$ of a goodness-of-fit testing procedure outlined in Algorithm \ref{alg1}, where  for the normality test in step (v) we consider, for example, a Kolmogorov-Smirnov, an Anderson-Darling or a Cramér-von Mises test. Those tests are consistent under alternatives, meaning that if one tests a simple null $H_0: X\stackrel d=Y$, then for a sample of $Z\stackrel d\neq Y$, the power of the test converges to $1$.

Before stating the main result, we state a lemma describing the asymptotic behavior of $\hat W^{(T)}:=\mathscr T_{\hat\theta_T}(\hat\eta^{(T)})$.

\begin{lemma}\label{thm5lemma}
Grant Assumptions \ref{assA}--\ref{assB} and Assumptions \ref{MLEass}. Suppose that we observe a realization of a stationary and ergodic point process $N$ having law $\mathcal P$ and intensity $\lambda\notin\mathscr L_\Theta:=\{\lambda_\theta:\theta\in\Theta\}$; i.e., $N$ does not belong to the null (\ref{H_0 eq2}).
Let $\theta^*\in\Theta$ be the ``least-false estimator", or ``oracle", maximizing the expected likelihood $\mathbb E_{\mathcal P}[L_T(\theta)]$ over $\theta\in\Theta$. Assume that $\int_0^\infty\|\mathbb E[\lambda(s)-\lambda_{\theta^*}(s)-\mathbb E[\lambda(0)-\lambda_{\theta^*}(0)]|\mathcal H_0]\|_2\ \mathrm ds<\infty$ and $\mathrm{Var}(\lambda(0)-\lambda_{\theta^*}(0))<\infty$. Select $\tau\in[0,1)$.

Then the transformed empirical process $\hat W^{(T)}:=\mathscr T_{\hat\theta_T}(\hat\eta^{(T)})$ converges to $W_1+W_2+V_3$, weakly on $D([0,\tau],\mathbb R^d)$ equipped with the strong Skorokhod $J_1$-topology, where $W_1$ is a standard Wiener process, where $W_2$ is a Wiener process with covariance 
\begin{equation}
    u\mapsto u\cdot\left(2\int_0^{\infty}\mathbb E\left[\frac{g_i(0)-\mathbb E[g_i(0)]}{\mu_{\hat\theta_T}^{(i)}}\frac{g_j(t)-\mathbb E[g_j(0)]}{\mu_{\hat\theta_T}^{(j)}}\right]\ \mathrm dt\right)_{i,j\in[d]},\label{covW1}
\end{equation}
and where $V_3(u)=\int_0^uW_3(s)$, with $W_3$ a Wiener process also having covariance structure given by (\ref{covW1}).
\end{lemma}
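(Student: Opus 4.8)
The plan is to rerun the four-step program of Section~\ref{Section3}, but now under the true law $\mathcal P$ and with the oracle $\lambda_{\theta^*}$ playing the role that $\lambda_{\theta_0}$ played under the null. Writing $g:=\lambda-\lambda_{\theta^*}$ for the intensity mismatch, I would start from the exact decomposition obtained by adding and subtracting $\int_0^{uT}\lambda$ and $\int_0^{uT}\lambda_{\theta^*}$:
\[
\hat\eta^{(T)}(u)=\underbrace{\frac1{\sqrt T}\Bigl(N(uT)-\int_0^{uT}\lambda(s)\,\mathrm ds\Bigr)}_{=:M^{(T)}(u)}+\underbrace{\frac1{\sqrt T}\int_0^{uT}g(s)\,\mathrm ds}_{=:D^{(T)}(u)}+\underbrace{\frac1{\sqrt T}\int_0^{uT}\bigl(\lambda_{\theta^*}(s)-\lambda_{\hat\theta_T}(s)\bigr)\,\mathrm ds}_{=:E^{(T)}(u)}.
\]
The transformation $\mathscr T_{\hat\theta_T}$ is linear and, by (\ref{transformation eq}), its kernel $1/(1-v)$ does not depend on $\theta$; only the prefactor $(\mu^{(k)}_{\hat\theta_T})^{-1/2}$ does. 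Hence $\mathscr T_{\hat\theta_T}(\hat\eta^{(T)})=\mathrm{diag}\bigl((\mu^{(k)}_{\hat\theta_T})^{-1/2}\bigr)\,\tilde{\mathscr T}(M^{(T)}+D^{(T)}+E^{(T)})$, and it suffices to find the joint weak limit of the parameter-free operator $\tilde{\mathscr T}$ applied to each piece. Throughout I would work on $[0,\tau]$ with $\tau<1$ fixed, where the kernel is bounded, so that $\tilde{\mathscr T}$ is continuous and the continuous mapping theorem applies exactly as in the proof of Theorem~\ref{step4thm}.

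For the martingale piece I would apply Lemma~\ref{FCLTmartingales} under $\mathcal P$ (with the discretization used in Theorem~\ref{step1thm}) to get $M^{(T)}\Rightarrow W_N$, a $d$-variate Wiener process with covariance $u\,\mathrm{diag}(\mu^{(k)})$, $\mu^{(k)}=\mathbb E[N^{(k)}[0,1]]$, the coordinates being independent by the orthogonal martingale structure. Since the prefactor is consistent for the \emph{observed} mean rate, $\mu^{(k)}_{\hat\theta_T}$ being estimated by $N^{(k)}(T)/T\to\mu^{(k)}$ (ergodic theorem), the computation of Theorem~\ref{step3thm} shows that $\tilde{\mathscr T}(W_N)$ is again Wiener with covariance $u\,\mathrm{diag}(\mu^{(k)})$; after the normalization this is the \emph{standard} Wiener process $W_1$. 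For the estimation piece I would first establish the misspecified analogue of Assumption~\ref{CLTass} (which holds only under the null): from Assumptions~\ref{MLEass} and the oracle first-order condition $\mathbb E_{\mathcal P}\bigl[\sum_k\lambda_{\theta^*}^{(k)}(0)^{-1}\,\partial_\theta\lambda_{\theta^*}^{(k)}(0)\,g^{(k)}(0)\bigr]=0$, one obtains $\hat\theta_T\to\theta^*$ and $\sqrt T(\hat\theta_T-\theta^*)=\mathcal O_{\mathbb P}(1)$. A Taylor expansion together with the uniform ergodic limit $\frac1T\int_0^{uT}D\lambda_{\theta^*}(s)\,\mathrm ds\to u\alpha_{\theta^*}$ from the proof of Theorem~\ref{step2thm} then yields $E^{(T)}\Rightarrow u\,\alpha_{\theta^*}\zeta$, an \emph{exactly linear} limit, which $\tilde{\mathscr T}$ annihilates (recall $\tilde{\mathscr T}(u\mapsto cu)\equiv0$, as already used in Theorem~\ref{step3thm}).

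The heart of the argument is the misspecification drift $D^{(T)}$. Splitting $g=\bar g+\tilde g$ with $\bar g=\mathbb E_{\mathcal P}[g(0)]$, the mean part contributes the exactly linear $u\sqrt T\,\bar g$, which $\tilde{\mathscr T}$ again annihilates; this is precisely what allows a nonvanishing $\bar g$ to be tolerated. For the centred part I would invoke a functional CLT for additive functionals of the stationary, ergodic process $\tilde g$: the hypotheses $\int_0^\infty\|\mathbb E[\tilde g(s)\mid\mathcal H_0]\|_2\,\mathrm ds<\infty$ and $\mathrm{Var}(g(0))<\infty$ are a Maxwell--Woodroofe/Gordin short-range-dependence condition guaranteeing $\frac1{\sqrt T}\int_0^{uT}\tilde g(s)\,\mathrm ds\Rightarrow W_3$, a Wiener process whose long-run covariance, after normalization, is the matrix in (\ref{covW1}). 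Applying $\tilde{\mathscr T}(\eta)=\eta(u)-\int_0^u\frac{\eta(1)-\eta(v)}{1-v}\,\mathrm dv$ to this limit, the first summand reproduces the Wiener term $W_2$, while the subtraction integral, a smooth (differentiable) Gaussian functional of $W_3$, contributes the integrated-Wiener term $V_3(u)=\int_0^u W_3(s)\,\mathrm ds$; both inherit (\ref{covW1}) because both are built from the single limit $W_3$. I would then assemble the three contributions through joint convergence, using the Cramér--Wold device of Lemma~\ref{cramerwold} on the vector $(M^{(T)},D^{(T)},E^{(T)})$, and conclude by continuous mapping for $\tilde{\mathscr T}$ on $D([0,\tau],\mathbb R^d)$.

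I expect the main obstacle to be twofold. First, the FCLT for $D^{(T)}$ is a genuinely new ingredient: unlike Step~2 under $H_0$, where the integrand collapsed to a deterministic linear drift, here $\tilde g$ carries the full dependence structure of the process, and one must check that the martingale-approximation condition delivers both the long-run covariance in (\ref{covW1}) and tightness. Second, and most delicate, is pinning down the precise action of the innovation transform on this \emph{non-null}, genuinely stochastic drift: $\tilde{\mathscr T}$ was engineered in Theorem~\ref{step3thm} to kill exactly the linear drift $u\alpha Z$, so isolating the smooth part of $\tilde{\mathscr T}(W_3)$ as the integrated Wiener $V_3$ — rather than allowing it to recombine with the $\eta(u)$ term — requires a careful, term-by-term analysis of the subtraction integral on $[0,\tau]$. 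Establishing the misspecified MLE asymptotics, needed because Assumption~\ref{CLTass} is available only under the null, is a comparatively routine but indispensable preliminary.
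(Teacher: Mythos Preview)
Your proposal is correct and follows essentially the paper's approach: the same three-term decomposition of $\hat\eta^{(T)}$, the same handling of the martingale piece (giving $W_1$) and the estimation piece (annihilated by $\tilde{\mathscr T}$), and the same FCLT for the centred misspecification drift $\tilde g$ (the paper cites Jacod--Shiryaev Theorem VIII.3.79 where you invoke Maxwell--Woodroofe/Gordin). The only difference is order of operations: the paper applies $\tilde{\mathscr T}$ \emph{before} passing to the limit --- obtaining the running-average $\bar g_k^T(s)=(T-s)^{-1}\int_s^T g_k(t)\,\mathrm dt$ explicitly and then taking separate limits of $\frac1{\sqrt T}\int_0^{uT}(g_k-\mathbb E g_k)$ and $\frac1{\sqrt T}\int_0^{uT}(\mathbb E g_k-\bar g_k^T)$ to produce $W_2$ and $V_3$ respectively --- whereas you take the FCLT limit first and then transform via continuous mapping, which is equivalent but makes your identification of the subtraction integral as literally $\int_0^u W_3(s)\,\mathrm ds$ for the \emph{same} $W_3$ imprecise (the paper's $W_3$ arises through a time-reversal of the FCLT limit, not as the FCLT limit itself).
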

\begin{remark}
The condition $\int_0^\infty\|\mathbb E[\lambda(s)-\lambda_{\theta^*}(s)-\mathbb E[\lambda(0)-\lambda_{\theta^*}(0)]|\mathcal H_0]\|_2\ \mathrm ds<\infty$ ensures that the influence of the past (before time $0$) on the future (after time $t$) diminishes rapidly enough (as $t\to\infty$), meaning that the process does not have an excessively long memory.
\end{remark}
\begin{proof}
The maximum likelihood estimator $\hat\theta_T$ is consistent in probability for the ``least-false estimator", or ``oracle", $\theta^*\in\Theta$; see \cite{Hjort}, Section~2.2. 
In particular, a CLT $\sqrt T(\theta^*-\hat\theta_T)\stackrel{d}\to Z$, as $T\to\infty$, still holds; cf.\ Assumption \ref{MLEass}.



Select $\tau\in(0,1)$ as in Theorem~\ref{step4thm} and work on $[0,\tau]\ni u$. Recall that $\hat W^{(T)}:=\mathscr T_{\hat\theta_T}(\hat\eta^{(T)})$, where
\begin{align}\nonumber
\hat\eta^{(T)}(u)&=\frac1{\sqrt T}\left(N(uT)-\int_0^{uT}\lambda(s)\ \mathrm ds\right)+\frac1{\sqrt T}\left(\int_0^{uT}\lambda(s)\ \mathrm ds-\int_0^{uT}\lambda_{\theta^*}(s)\ \mathrm ds\right)\\
&\label{Thm5decomp}{\phantom=}\ +\frac1{\sqrt T}\left(\int_0^{uT}\lambda_{\theta^*}(s)\ \mathrm ds-\int_0^{uT}\lambda_{\hat\theta_T}(s)\ \mathrm ds\right).
\end{align}
By the CLT $\sqrt T(\theta^*-\hat\theta_T)\stackrel{d}\to Z$ and an argument analogous to the proof of Theorem \ref{step2thm}, the third term on the right-hand side of (\ref{Thm5decomp}) converges to a shift term linear in $u$, with random coefficient, which is annihilated by the innovation martingale transformation  $\mathscr T_{\hat\theta_T}$.
Next, the transformation (through $\mathscr T_{\hat\theta_T}$) of the first term on the right-hand side of (\ref{Thm5decomp})  converges weakly on $D([0,\tau],\mathbb R^d)$ to a standard Wiener process $W_1$, as $T\to\infty$. 

In order to be able to describe the deviation from the null $\lambda\notin\mathscr L_\Theta:=\{\lambda_\theta:\theta\in\Theta\}$, we consider the contribution of the second term of (\ref{Thm5decomp})  after transformation through $\mathscr T_{\hat\theta_T}$. Note that 
\begin{equation}\label{trans 2nd term}
\mathscr T_{\hat\theta_T}\left(u\mapsto\frac1{\sqrt T}\left(\int_0^{uT}\lambda(s)\ \mathrm ds-\int_0^{uT}\lambda_{\theta^*}(s)\ \mathrm ds\right)\right)
\end{equation}
has, after a change of variables for the second integral, $k$th component equal to 
\begin{equation}\label{Dk}
\frac1{\sqrt T\sqrt{\mu_{\hat\theta_T}^{(k)}}}\left(\int_0^{uT}g_k(s)\ \mathrm ds-\int_0^{uT}\frac1{T-v}\int_{v}^Tg_k(s)\ \mathrm ds\ \mathrm dv\right)=\frac1{\sqrt T\sqrt{\mu_{\hat\theta_T}^{(k)}}}\int_0^{uT}\left(g_k(s)-\bar g_k^{T}(s)\right)\ \mathrm ds,
\end{equation}
where $g_k(s):=\lambda^{(k)}(s)-\lambda_{\theta^*}^{(k)}(s)$ and $\bar g_k^{T}(s):=(T-s)^{-1}\int_{s}^Tg_k(t)\ \mathrm dt$. 
Decompose (\ref{Dk}) into 
\begin{equation}\label{Dkdecomp}
        \frac1{\sqrt T\sqrt{\mu_{\hat\theta_T}^{(k)}}}\int_0^{uT}\left(g_k(s)-\mathbb E[g_k(0)]\right)\ \mathrm ds+
    \frac1{\sqrt T\sqrt{\mu_{\hat\theta_T}^{(k)}}}\int_0^{uT}\left(\mathbb E[g_k(0)]-\bar g_k^{T}(s)\right)\ \mathrm ds.
\end{equation}


Using \cite{JacodShiryaev}, Theorem VIII.3.79, with $p=q=2$ it follows that 
$$\frac1{\sqrt T\sqrt{\mu_{\hat\theta_T}^{(k)}}}\int_0^{uT}\left(g(s)-\mathbb E[g(0)]\right)\ \mathrm ds\stackrel d\longrightarrow W_2(u),$$ as $T\to\infty$, weakly on $D([0,\tau],\mathbb R^d)$,
where $W_2$ is a $d$-variate Wiener process with covariance given by (\ref{covW1}).

Next, by the same result it follows that $$\frac{(1-u)\sqrt T}{\sqrt{\mu_{\hat\theta_T}^{(k)}}}\left(\mathbb E[g_k(0)]-\bar g_k^{T}(uT)\right)\stackrel d\longrightarrow W_3^{(k)}(1-u),$$ as $T\to\infty$, weakly on $D([0,1],\mathbb R^d)$, 
for another Wiener process $W_3^{(k)}$ of variance (\ref{covW1}).
 Apply the change of variables $s'=Ts$ to the second term of (\ref{Dkdecomp}). By the continuous mapping theorem \cite{vdVaart}, Theorem 18.11, applied to $h:D([0,\tau],\mathbb R^d)\to D([0,\tau],\mathbb R^d)$ given by $(h(f))^{(k)}(u)=\frac1{1-u}\int_0^{u}f^{(k)}(s)\ \mathrm ds$ it follows that the second term of (\ref{Dkdecomp}) converges weakly on $D([0,\tau],\mathbb R^d)$ equipped with the strong Skorokhod $J_1$-topology to $V_3$.
\end{proof}

\begin{theorem}\label{consistency thm}
Grant the conditons of Lemma \ref{thm5lemma}.
Fit $N$ to the parametric hypothesis $H_0$ by calculating the maximum likelihood estimator $\hat\theta_T$. 
Suppose that $\lambda$ does \emph{not} satisfy \begin{equation}
    \lambda\stackrel{d}=\lambda_\theta+c\quad\text{for some}\quad\theta\in\Theta\quad\text{and some}\quad c\in\mathbb R^d,\label{nolinearcomb}
\end{equation} 
and test the null (\ref{H_0 eq2}) using Algorithm \ref{alg1}, using a goodness-of-fit test that is consistent under alternatives at rate $\sqrt{n(T)}$ in step (v). Then this goodness-of-fit test has non-trivial power, asymptotically. Furthermore, for $W_1$ and $W_2$ as in Lemma \ref{thm5lemma}, suppose that 
\begin{equation}
    \label{weirdcondition}
    \text{There exist }s<t, s,t\in[0,\tau]:\quad W_1(t)+W_2(t)-W_1(s)-W_2(s)\not\sim\mathcal N(0,t-s).
\end{equation}
Then the power of the test converges to $1$, as $n(T)\to\infty$.
\end{theorem}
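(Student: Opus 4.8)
The plan is to read off the limiting law of the increment sample $(\hat Z^{(T)}_i)_{i\in[n]}$ from (\ref{rv hatZ}) using Lemma~\ref{thm5lemma}, and then to combine this with the rate-$\sqrt{n(T)}$ consistency of the normality test in step (v) of Algorithm~\ref{alg1}. By Lemma~\ref{thm5lemma}, under the alternative the transformed process $\hat W^{(T)}=\mathscr T_{\hat\theta_T}(\hat\eta^{(T)})$ converges weakly on $D([0,\tau],\mathbb R^d)$ to $W_1+W_2+V_3$. First I would apply the Skorokhod representation theorem to realize this convergence almost surely and uniformly on $[0,\tau]$, so that the scaled increments $\hat Z^{(T)}_i$ can be compared directly with increments of the limit. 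The structural point driving everything is that $W_1+W_2$ is a $d$-variate Wiener process with per-unit-time covariance $\Sigma:=\mathrm{Cov}(W_1(1)+W_2(1))$, whereas $V_3(u)=\int_0^uW_3(s)\,\mathrm ds$ has finite variation; hence over the mesh of size $\tau/n$ the scaled increments of $V_3$ are of order $\sqrt{\tau/n}\,W_3\to0$ individually, while those of $W_1+W_2$ behave like an i.i.d.\ $\mathcal N(0,\Sigma)$ sample.

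I would treat the power-one statement first, as it is the cleaner half. Condition (\ref{weirdcondition}) asserts that the increment of $W_1+W_2$ over some $[s,t]$ is not $\mathcal N(0,(t-s)I_d)$, which is exactly the statement $\Sigma\neq I_d$. From the dichotomy above, the realized covariance $\tfrac1n\sum_i\hat Z^{(T)}_i(\hat Z^{(T)}_i)^\top$ converges in probability to $\Sigma$ (the finite-variation part $V_3$ contributing nothing to quadratic variation), so the empirical law of $(\hat Z^{(T)}_i)$ converges to the fixed distribution $\mathcal N(0,\Sigma)\neq\mathcal N(0,I_d)$. Consistency of the step-(v) test against fixed alternatives at rate $\sqrt{n(T)}$ then forces the test statistic to diverge, so the power tends to $1$ as $n(T)\to\infty$.

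For the non-trivial-power statement the mechanism is more delicate, because (\ref{nolinearcomb}) only guarantees that $g:=\lambda-\lambda_{\theta^*}$ is not almost surely constant, whence the long-run covariance (\ref{covW1}) does not vanish and $W_3$ (so $V_3$) is non-degenerate, but it does \emph{not} force $\Sigma\neq I_d$: the cross-covariance between $W_1$ and $W_2$ may cancel the contribution of $W_2$. In that boundary regime the increments are marginally $\mathcal N(0,I_d)$, so the deviation must be extracted from the empirical process of the increments rather than from their marginal covariance. I would perform a Hájek-type expansion of $\sqrt n\,(\hat F_n-\Phi)$, with $\hat F_n$ the empirical c.d.f.\ of $(\hat Z^{(T)}_i)$: writing each increment as a standard normal part plus the vanishing drift $\delta_i\approx\sqrt{\tau/n}\,W_3(u_i)$, the averaged drift $\tfrac1n\sum_i\delta_i\approx(\tau n)^{-1/2}\int_0^\tau W_3(s)\,\mathrm ds$ enters at precisely the $\sqrt n$ scale, so that $\sqrt n\,(\hat F_n-\Phi)$ converges to $B-\phi\,R$, where $B$ is a Brownian bridge and $R:=\tau^{-1/2}\int_0^\tau W_3(s)\,\mathrm ds$ is a non-degenerate centred Gaussian independent of $B$. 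The limiting test statistic is therefore a functional of $B-\phi R$ rather than of $B$, and I would finish by showing that the non-degeneracy of $R$ makes the limiting rejection probability strictly exceed the nominal size.

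The main obstacle is this final step: establishing rigorously that the $V_3$-drift persists as a genuine local alternative in the joint limit $T\to\infty$, $n(T)\to\infty$ with $T/n(T)\to\infty$ (so that the expansion of the empirical process is valid and its remainder is asymptotically negligible), and that the induced random shift $R$ strictly raises the asymptotic rejection probability above the level. Controlling the interplay between the grid refinement governed by $n$ and the process convergence governed by $T$, and handling the possibly non-trivial joint correlation of $W_1$, $W_2$ and $W_3$ in the multivariate empirical-process expansion, is where the real work lies; by contrast, the power-one conclusion reduces cleanly to the covariance mismatch $\Sigma\neq I_d$.
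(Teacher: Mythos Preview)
Your approach matches the paper's: invoke Lemma~\ref{thm5lemma} to get $\hat W^{(T)}\Rightarrow W_1+W_2+V_3$, then separate the power-one claim (driven by $W_1+W_2$ having covariance $\Sigma\neq I_d$ under~\eqref{weirdcondition}) from the non-trivial-power claim (driven by the finite-variation piece $V_3$ acting as a local perturbation at the $1/\sqrt{n(T)}$ scale). Your reading of~\eqref{nolinearcomb} as ``$g=\lambda-\lambda_{\theta^*}$ not a.s.\ constant, hence the covariance~\eqref{covW1} is non-zero and $W_3,V_3$ non-degenerate'' is exactly the paper's.

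The only real difference is in how carefully the $V_3$ mechanism is analysed. The paper is content with the heuristic that the scaled increments of $V_3$ are of order $1/\sqrt{n(T)}$ with strong positive autocorrelation (it computes $\sqrt{n/\tau}\bigl(V_3(i\tau/n)-V_3((i-1)\tau/n)\bigr)\approx (n\tau)^{-1/2}W_3((n-i)\tau/n)$), and since the step-(v) test is $\sqrt{n(T)}$-consistent, this perturbation ``contributes non-trivially to the power''. You go further and spell out the Haj\'ek-type expansion leading to the limit $B-\phi R$ with $R=\tau^{-1/2}\int_0^\tau W_3$, which is more rigorous but not what the paper does; the paper stops at the rate-matching observation and does not identify the limiting empirical process or the shifted rejection probability explicitly. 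Your concern about the joint limit $T\to\infty$, $n(T)\to\infty$, $T/n(T)\to\infty$ is legitimate, but the paper does not address it beyond the informal argument either.
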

\begin{proof}
Lemma \ref{thm5lemma} proves convergence of $\hat W^{(T)}:=\mathscr T_{\hat\theta_T}(\hat\eta^{(T)})$ to $W_1+W_2+V_3$. 
Note that both $W_2$ and $V_3$ are non-zero stochastic processes if there is some $k\in[d]$ such that $\mathrm{Var}(g_k(0))\neq0$, hence if $\lambda$ is not equal (in $L^1(\mathbb P)$) to $\lambda_{\theta}+c$ for some $\theta\in\Theta$ and some $c\in\mathbb R^d$. 

Now consider the goodness-of-fit testing procedure outlined in Algorithm \ref{alg1}, selecting $n(T)$ such that $T/n(T)\to\infty$ and $n(T)\to\infty$ as $T\to\infty$, where we base the test on the transformed process  $\hat W^{(T)}:=\mathscr T_{\hat\theta_T}(\hat\eta^{(T)})$ under misspecification $\lambda\notin\mathscr L_\Theta:=\{\lambda_\theta:\theta\in\Theta\}$. Consider step (iv) of that testing procedure. The sample $(\hat Z_i^{(T)})_{i\in[n(T)]}$ converges in distribution to a sample of standard normal random vectors coming from $W_1$; plus a normally distributed perturbation coming from $W_2$; plus a  contribution coming from $V_3$. In order to detect the deviation from the null hypothesis, the contribution of $W_2+V_3$ should be visible in the sample $(\hat Z_i^{(T)})_{i\in[n(T)]}$, asymptotically, as the sample size $n(T)$ diverges, as $T\to\infty$.

Firstly, consider the contribution of $V_3$ which is defined as the integral of a Wiener process $W_3$, i.e., $V_3(t)=\int_0^tW_3(1-s)\ \mathrm ds$. Note that, with $n=n(T)$,
$$
\left(\sqrt{\frac{n}{\tau}} \left(V_3(i\tau/n) - V_3((i-1)\tau/n)\right)\right)_{i\in[n]}=\left(\frac1{\sqrt {n\tau}}W_3((n-i)\tau/n)+\mathcal O_{\mathbb P}(1/n)\right)_{i\in[n]}
$$
composes a sample of size $n(T)$, with magnitude of order $1/\sqrt {n(T)}$, having a positive autocorrelation converging to $1$ at rate $n(T)$. Typical goodness-of-fit tests based on functionals of the test process, like the Kolmogorov-Smirnov, Anderson-Darling and Cramér-von Mises tests, are consistent at rate $\sqrt{n(T)}$. Hence, the term coming from $V_3$, which is of magnitude $1/\sqrt {n(T)}$, contributes non-trivially to the power of the test. However, the perturbation coming from $V_3$ does not guarantee the power of the test to converge to $1$.

Secondly, we consider the contribution of $W_2$. Note that $W_1$ is a standard Wiener process, while $W_2$ is a non-zero Wiener process. When condition (\ref{weirdcondition}) holds, $W_1+W_2$ does \emph{not} reduce to a process having variance $u\mapsto uI_d$. Hence, for any goodness-of-fit test in step (v) of Algorithm \ref{alg1} that is consistent under alternatives (such as the Kolmogorov-Smirnov, Anderson-Darling and Cramér-von Mises tests), the deviation of $W_1+W_2$ from a standard Wiener process under the alternative $N$ is detected with probability $1$, asymptotically, as $T\to\infty$. 
\end{proof}

\begin{remark}
    Condition~(\ref{weirdcondition}) implicitly constrains the covariance structure between $W_1$ and $W_2$. Since $W_1$ is a standard Wiener process and $W_2$ is a non-zero Wiener process, this condition is relatively mild. 
    For general Wiener processes $W_1$ and $W_2$ --- with $W_1$ standard Wiener --- that are dependent in a pathological way, it is possible that $W_1 + W_2$ reduces to a standard Wiener process. For example, if $W_3$ is a standard Wiener process independent of $W_1$, and $W_2=2\rho\left(\sqrt{1-\rho^2}W_3-\rho W_1\right)$ for $\rho\in(0,1)$, then $W_1+W_2$ reduces to a standard Wiener process. 
    
\end{remark}

A natural follow-up question is whether the conclusion of Theorem~\ref{consistency thm} continues to hold under a \emph{root-$n(T)$} alternative. Since $n(T)\asymp\sqrt T$, this is the same as a ``$T^{1/4}$ alternative". Specifically, consider a family of alternative models $\breve{N}_T$ on $[0, T]$ with conditional intensity functions of the form 
\begin{equation}
    \breve{\lambda}_T = \lambda_\theta + \breve{\lambda}/T^{1/4},\label{4rootT def}
\end{equation}
where $\theta \in \Theta$ and $\breve{\lambda}$ is the conditional intensity of some other point process on $[0,T]$.

Under such alternatives, the perturbation introduced by $V_3$ is of order $1/n(T)$ and is therefore asymptotically undetectable. Consequently, the focus should be on the contribution of $W_2$. As in Theorem~\ref{consistency thm}, suppose that condition~(\ref{weirdcondition}) holds. In that case, $W_2$ induces a perturbation of order $1/\sqrt{n(T)}$, which leads to nontrivial power in the goodness-of-fit test. However, such root-$n(T)$ alternatives do not guarantee that the test's power converges to one.

\begin{corollary}\label{rootT}
Use the setup of Theorem~\ref{consistency thm}, but work with root-$n(T)$ alternatives. Consider the goodness-of-fit test for the null (\ref{H_0 eq2}) using Algorithm \ref{alg1}, using a normality test that is consistent under alternatives at rate $\sqrt{n(T)}$ in step (v). Then this goodness-of-fit test has non-trivial power, asymptotically. 
\end{corollary}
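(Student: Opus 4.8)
The plan is to re-run the proof of Lemma~\ref{thm5lemma} and the non-trivial-power part of Theorem~\ref{consistency thm} essentially line for line, but with the local alternative $\breve\lambda_T=\lambda_\theta+\breve\lambda/T^{1/4}$ in place of a fixed misspecified intensity, tracking how the extra factor $T^{-1/4}$ propagates through the innovation transformation into the increment sample $(\hat Z_i^{(T)})$. First I would note that under the local alternative the maximum likelihood estimator still obeys a root-$T$ CLT around a ($T$-dependent) oracle $\theta_T^*$ with $\theta_T^*\to\theta$; since $\breve\lambda_T-\lambda_\theta=\breve\lambda/T^{1/4}$, the induced oracle shift is of the same order, so that $T^{1/4}(\theta_T^*-\theta)\to h$ for some finite $h$ determined by the Fisher information. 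Applying decomposition (\ref{Thm5decomp}) with $\lambda=\breve\lambda_T$ and $\theta^*=\theta_T^*$, the first (martingale) term converges to a standard Wiener process $W_1$ and the third (oracle-MLE) term is annihilated by $\mathscr T_{\hat\theta_T}$, exactly as before. The only genuinely new feature is the second term, whose integrand $g_k=\breve\lambda_T^{(k)}-\lambda_{\theta_T^*}^{(k)}=T^{-1/4}\tilde g_k$ is now of order $T^{-1/4}$, with $\tilde g_k=\breve\lambda^{(k)}-(D\lambda_\theta\,h)^{(k)}+o(1)$ of order one.

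Next I would push this scaling through the post-transformation expressions (\ref{Dk})--(\ref{Dkdecomp}). The factor $T^{-1/4}$ pulls straight out, so by the same \cite{JacodShiryaev} CLT invoked in Lemma~\ref{thm5lemma} the first part of (\ref{Dkdecomp}) converges to $T^{-1/4}W_2(u)(1+o(1))$, with $W_2$ a Wiener process of covariance (\ref{covW1}) now built from $\tilde g$, and the second part to $T^{-1/4}V_3(u)(1+o(1))$. Using $T^{1/4}\asymp\sqrt{n(T)}$ this gives, on $[0,\tau]$,
\begin{equation*}
\hat W^{(T)}(u)\;\longrightarrow\;W_1(u)+\tfrac{1}{\sqrt{n(T)}}\bigl(W_2(u)+V_3(u)\bigr)\bigl(1+o(1)\bigr).
\end{equation*}
Passing to the increments (\ref{rv hatZ}), the crucial bookkeeping is that $V_3$, an integral of a Wiener process, is smoothed: in the fixed-alternative analysis of Theorem~\ref{consistency thm} its scaled increments are already only $O(1/\sqrt{n})$, so the additional factor $T^{-1/4}\asymp1/\sqrt{n}$ present here drives its contribution to each $\hat Z_i^{(T)}$ down to $O(1/n(T))$, invisible to a rate-$\sqrt{n(T)}$ test. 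The scaled increments of $W_2$, by contrast, are $O(1)$, so $\tfrac{1}{\sqrt n}W_2$ contributes a genuine $O(1/\sqrt{n(T)})$ perturbation. I would thus obtain
\begin{equation*}
\hat Z_i^{(T)}=Z_i+\tfrac{1}{\sqrt{n(T)}}\,\zeta_i+o_{\mathbb P}\!\bigl(\tfrac{1}{\sqrt{n(T)}}\bigr),\qquad i\in[n(T)],
\end{equation*}
where the pairs $(Z_i,\zeta_i)$ are i.i.d.\ across $i$ and jointly Gaussian, with $Z_i\sim\mathcal N(0,I_d)$ the increments of $W_1$ and $\zeta_i$ the mean-zero increments of $W_2$.

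The final step is a Pitman local-power argument. Because $Z_i$ and $\zeta_i$ are correlated --- with cross-covariance $C:=\mathrm{Cov}(W_1,W_2)$ inherited from the joint functional CLT --- the marginal law of $\hat Z_i^{(T)}$ is $\mathcal N\!\bigl(0,\,I_d+\tfrac{2}{\sqrt{n(T)}}C+O(1/n(T))\bigr)$, a genuine $O(1/\sqrt{n(T)})$ departure from $\mathcal N(0,I_d)$ whenever $C\neq0$. Since the $(\hat Z_i^{(T)})$ are i.i.d., the empirical process $\mathbb G_n:=\sqrt{n}(\hat F_n-\Phi)$ of their empirical distribution function $\hat F_n$ then converges weakly to $\mathbb G+b$, with $\mathbb G$ the null Brownian-bridge-type limit and $b$ the deterministic Pitman drift determined by $C$. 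For any statistic that is consistent under alternatives at rate $\sqrt{n(T)}$ --- Kolmogorov-Smirnov, Anderson-Darling and Cramér-von Mises among them --- the limit law is the corresponding functional of $\mathbb G+b$, which is shifted away from its null distribution as soon as $b\not\equiv0$; this yields asymptotic rejection probability strictly above the nominal size, i.e.\ non-trivial power. As the perturbation sits at exactly the detection order $1/\sqrt{n(T)}$ rather than above it, $b$ is finite and the power need not reach one, matching the statement.

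I expect the main obstacle to be verifying that the limiting drift $b$ is non-degenerate, equivalently that the cross-covariance $C=\mathrm{Cov}(W_1,W_2)$ does not vanish. This is the root-$n(T)$ analogue of the role of condition~(\ref{weirdcondition}) in Theorem~\ref{consistency thm}: what must be excluded here is orthogonality between the martingale limit $W_1$ and the intensity-fluctuation limit $W_2$, a correlation that is non-zero generically for stochastic-intensity models such as Hawkes, where past innovations feed forward into the current intensity. A secondary, purely technical difficulty is to confirm that the oracle shift $h$ and every $o_{\mathbb P}$ remainder are uniformly of order smaller than $1/\sqrt{n(T)}$ on $[0,\tau]$, so that the clean leading-order representation of $\hat Z_i^{(T)}$, and hence the Pitman-drift limit, is legitimate.
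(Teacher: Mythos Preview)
Your proposal is correct and follows essentially the same approach as the paper: both arguments observe that under the $T^{-1/4}$ scaling the $V_3$ contribution to the increments drops to order $1/n(T)$ and is undetectable, while the $W_2$ contribution sits at exactly order $1/\sqrt{n(T)}$ and therefore yields non-trivial (but not full) asymptotic power for a rate-$\sqrt{n(T)}$ test. Your Pitman-drift formulation via the cross-covariance $C=\mathrm{Cov}(W_1,W_2)$ is a more explicit rendering of what the paper handles informally by invoking condition~(\ref{weirdcondition}); the paper's own justification is just the short discussion paragraph preceding the corollary.
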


The proof of Theorem~\ref{consistency thm} implies that if the test from Section~\ref{Section4.1} is not consistent under an ergodic alternative with intensity $\lambda$, then it holds for some $c\in\mathbb R^d$ that $\lambda(t)-\lambda_{\theta^*}(t)\to c$, as $t\to\infty$. Under
a stationary alternative, this is only possible if $\lambda\stackrel{d}=\lambda_{\theta^*}+c$ to begin with. By choosing a model ``closed under adding constant intensities" (or: a model with a parametric intensity specification including a linear parametric term), i.e., a model such that
$$
\{\lambda_\theta+c:c\in\mathbb R^d,\lambda_\theta(s)+c\geq0\text{ for all }s\geq0\text{ a.s.},\theta\in\Theta\}\subset\mathscr L_\Theta,
$$
we have consistency under any \emph{stationary} deviation from $H_0$. Note that this implies that it is a good idea to add a linear parametric term to the conditional intensity, if one wishes to ensure consistency under $H_1$. If our model is not ``closed under adding constant intensities", then the only stationary deviations from the null that our test might fail to detect are point processes having an intensity that is a member of
$$
\{\lambda_\theta+c:c\in\mathbb R^d\setminus\{0\},\lambda_\theta(s)+c\geq0\text{ for all }s\geq0\text{ a.s.},\theta\in\Theta\}.
$$
On the other hand, even for a null model ``closed under adding constant intensities", it is still possible that
our test fails to detect an ergodic, but nonstationary deviation $N$ from the null, but where $\lambda\to\lambda_\theta$ a.s., for some $\theta\in\Theta$. However, one might argue that any such model is in fact ``close" to the null, since it converges to a null model as it converges to stationarity.

\section{Simulations for parametric Hawkes null hypotheses}\label{Section5.1}
In this section, we investigate the behavior of the goodness-of-fit tests outlined in Section~\ref{SectionGoF}, by considering  parametric null hypotheses consisting of linear self-exciting Hawkes processes, the parameters of which we estimate from simulated sample paths of point processes. 
The Hawkes process was introduced in 1971, see \cite{Hawkes, Hawkes2}, and see e.g., \cite{ACL15,Bacry,Ikefuji} for more recent advances. 
In particular, we consider rejection rates both under simulations of point processes that are a member of the null, and of processes that are not. 
We compare the asymptotically exact testing procedure outlined in Algorithm~\ref{alg1} to ``naive" testing procedures ignoring estimation uncertainty $\hat\theta_T\neq\theta_0$, and we argue that our asymptotically exact testing procedure has an advantage over the test based on just the compensated empirical process and over the random-time-change-based test described in Section~\ref{sectionRTC}.

For the parameter space $\Theta=(0,10)^3$ satisfying Assumption~\ref{assA}(i), consider the parametric null hypothesis
\begin{equation}\label{H0 exp}
H_0^{\mathrm{Exp}}: N\stackrel{d}=N_\theta^{\mathrm{Exp}}\text{ for some }\theta\in\{(\mu,\alpha,\beta)\in\Theta:\alpha<\beta\},
\end{equation}
where $N_\theta^{\mathrm{Exp}}=N_{\mu,\alpha,\beta}^{\mathrm{Exp}}$ is a univariate linear exponential Hawkes process with conditional intensity $\lambda_{\mu,\alpha,\beta}^{\mathrm{ExpH}}$ specified in Table \ref{lambdatabel}.
Consider also the similar parametric null hypothesis
\begin{equation}\label{H0 PL}
H_0^{\mathrm{PL}}: N\stackrel{d}=N_\theta^{\mathrm{PL}}\text{ for some }\theta\in\{(\mu,\alpha,\beta)\in\Theta:\alpha<\beta\},
\end{equation}
where $N_\theta^{\mathrm{PL}}=N_{\mu,\alpha,\beta}^{\mathrm{PL}}$ is a univariate linear power-law Hawkes process with conditional intensity $\lambda_{\mu,\alpha,\beta}^{\mathrm{PLH}}$ specified in Table \ref{lambdatabel}.

For both hypotheses, we simulate realizations on $[0,T]$ for $T=5{,}000$ and $T=50{,}000$.  
We set  $n=\mathrm{ceil}(\sqrt T/4)$ for both the asymptotically correct transformation-based testing procedure and the ``naive" testing procedure outlined in Section~\ref{Section4.1}.
\begin{remark}
The choice of the hyperparameter $n$ is motivated in Section~\ref{Section4.1}. 
It turns out that $c=1/4$ yields the right rejection rates for various choices of $N_{\mu,\alpha,\beta}^{\mathrm{Exp}}$. 
However, one should be careful with selecting models with $\alpha$ close to $\beta$, since, for fixed $T$, for such parameters $\hat W^{(T)}$ might behave more erratically than a Brownian motion. 
This is a property inherent to the Hawkes process, not to our testing procedure; see, e.g., \cite{Rosenbaum1}. 
\end{remark}
 
Using the transformation-based and the ``naive" testing procedures, we perform goodness-of-fit tests for $500$ simulated realizations of the processes listed in Table \ref{lambdatabel}.

\begin{table}[ht]
\centering
\small
\caption{\small Conditional intensities of the simulated processes}

\begin{tabular}{l | l | l}\label{lambdatabel}

Model & Name & Shape of intensity  \\
\hline
$N^{\mathrm{ExpH}}:=N_{1/2,1,2}^{\mathrm{Exp}}$ & Exponential Hawkes  & $\displaystyle\lambda_{\mu,\alpha,\beta}^{\mathrm{ExpH}}:=\mu + \sum_{t_i<t\text{ event times of N}} \alpha e^{-\beta(t - t_i)}$;  \\
$N^{\mathrm{PLH}}:=N_{1/2,1,2}^{\mathrm{PL}}$ & Power-law Hawkes &$\displaystyle\lambda_{\mu,\alpha,\beta}^{\mathrm{PLH}}:=\mu + \sum_{t_i<t\text{ event times of N}} \alpha (1 + t - t_i)^{-\beta}$\\
\( N^{\mathrm{SN}}_{1,2,2} \) & Shot-noise & $\displaystyle\lambda_{\mu,\alpha,\beta}^{\mathrm{SN}}:=\sum_{t_i<t\  \mathrm{ Poi}(\mu)\text{ times}} \alpha e^{-\beta (t - t_i)}$ \\
\( N^{\mathrm{SN}}_{1/5,10,2} \) & Shot-noise  & $\displaystyle\lambda_{\mu,\alpha,\beta}^{\mathrm{SN}}:=\sum_{t_i<t\  \mathrm{ Poi}(\mu)\text{ times}} \alpha e^{-\beta (t - t_i)}$ \\
$N^{\mathrm{Periodic}}:=N^{\mathrm{Periodic}}_{5/4,1,1/5,0}$ & Time-inhomogeneous Poisson & $\lambda_{\mu,\alpha,\beta}^{\mathrm{Periodic}}:=\mu + \alpha \sin(\beta(t - \gamma))$ \\
$ N^{\mathrm{SC}}:=N^{\mathrm{SC}}_{1,1/2,\log(2)}$ & Self-correcting  & $\lambda_{\mu,\alpha,\beta}^{\mathrm{SC}}:=\mu e^{\beta t} \alpha^{N(t_-)}$ \\
\end{tabular}
\end{table}

\small
\begin{table}[t]\centering
\caption{{\small Using the parametric null hypothesis $H_0^{\mathrm{Exp}}$, $T=5{,}000$ and $n=\mathrm{ceil}(\sqrt T/4)$, we conduct $500$ simulations for one model contained in $H_0$ and five models not contained in $H_0$. 
We report the number of rejections $R_{0.01}; R_{0.05}; R_{0.20}$ out of $500$ tests, using significance levels $0.01; 0.05; 0.20$, using both the transformation-based testing procedure and the ``naive" testing procedure outlined in Algorithm \ref{alg1}, using Kolmogorov-Smirnov (KS), Cramér-von Mises (CvM) and Anderson-Darling (AD) normality tests in step~(v) thereof.}}

\begin{tabular}{c | c c c | c c c}
	&\multicolumn{3}{c}{Transformation-based testing procedure}&\multicolumn{3}{c}{``Naive" testing procedure}\\
Test&KS&CvM&AD&KS&CvM&AD\\
\hline
$N^{\mathrm{ExpH}}$         &8; 39; 108  &9; 34; 116  &8; 32; 123  &0; 1; 6    &0; 0; 7    &0; 1; 12  \\
$N^{\mathrm{PLH}}$          &7; 35; 109  &5; 32; 97   &9; 35; 113  &0; 0; 7    &0; 0; 3    &0; 1; 14  \\
$N^{\mathrm{SN}}_{1,2,2}$   &4; 23; 114  &6; 27; 116  &5; 26; 99   &0; 0; 9    &0; 0; 4    &0; 0; 5   \\
$N^{\mathrm{SN}}_{1/5,10,2}$&7; 44; 237  &0; 33; 256  &0; 27; 262  &0; 5; 96   &0; 7; 154  &0; 10; 170\\
$N^{\mathrm{Periodic}}$     &43; 242; 480&16; 322; 497&12; 341; 499&3; 170; 454&7; 274; 493&2; 298; 499\\
$N^{\mathrm{SC}}$           &500; 500; 500&500; 500; 500&500; 500; 500&500; 500; 500&500; 500; 500&500; 500; 500\\

\end{tabular}\label{table1}
\end{table}
\normalsize

\small     
\begin{table}[htbp]\centering
\caption{{\small We report the number of rejections $R_{0.01}; R_{0.05}; R_{0.20}$ out of $500$, using significance levels $0.01; 0.05; 0.20$ in an experiment analogous to the one conducted for Table~\ref{table1}, but now with parametric null hypothesis $H_0^{\mathrm{Exp}}$ and $T=50{,}000$.}}

\begin{tabular}{c | c c c | c c c}
	&\multicolumn{3}{c}{Transformation-based testing procedure}&\multicolumn{3}{c}{``Naive" testing procedure}\\
Test&KS&CvM&AD&KS&CvM&AD\\
\hline
$N^{\mathrm{ExpH}}$			&6; 25; 110	&5; 20; 107	&4; 25; 108	&0; 0; 9	&0; 0; 5	&0; 0; 6	\\
$N^{\mathrm{PLH}}$			&9; 31; 101	&8; 30; 113	&9; 37; 144	&0; 0; 11	&0; 0; 9	&0; 2; 28	\\
$N^{\mathrm{SN}}_{1,2,2}$		&6; 20; 91	&5; 21; 96	&4; 21; 96	&0; 0; 6	&0; 0; 4	&0; 0; 6	\\
$N^{\mathrm{SN}}_{1/5,10,2}$	&83; 258; 464	&68; 337; 488	&143; 418; 499	&24; 166; 441	&43; 278; 478	&104; 396; 494	\\
$N^{\mathrm{Periodic}}$		&495; 500; 500	&499; 500; 500	&500; 500; 500	&494; 500; 500	&500; 500; 500	&500; 500; 500	\\
$N^{\mathrm{SC}}$			&500; 500; 500	&500; 500; 500	&500; 500; 500	&500; 500; 500	&500; 500; 500	&500; 500; 500	\\
\end{tabular}\label{table2}
\end{table}
\normalsize

For the simulations under $H_0^{\mathrm{Exp}}$, we consider the ``small" time horizon $T=5{,}000$ in Table~\ref{table1} and the ``large" time horizon $T=50{,}000$ in Table~\ref{table2}. 
The processes (1)--(6) average around $1$ event per time unit, hence those time frames correspond to samples of around $5{,}000$ and $50{,}000$ events.

First, note that for the model $N^{\mathrm{ExpH}}$ (which belongs to $H_0^{\mathrm{Exp}}$), the rejection rates for the transformation-based testing procedure align well with the nominal significance levels. 
Specifically, we observe approximately 5, 25, and 100 rejections out of 500 at significance levels of $0.01$, $0.05$, and $0.20$, respectively. 
This confirms the asymptotic correctness of the transformation-based procedure, as stated in Section~\ref{Section4.1}. 
In contrast, the ``naive" testing procedure --- based on untransformed empirical processes and ignoring the estimation uncertainty ($\hat\theta_T \neq \theta_0$) --- produces undersized tests. 
For $N^{\mathrm{ExpH}}$, the number of rejections at significance levels of $0.01$, $0.05$, and $0.20$ is far below the expected counts of 5, 25, and 100, respectively. 
This demonstrates that ignoring estimation uncertainty leads to unreliable tests.
Finally, the power of the transformation-based procedure against alternative hypotheses appears to be higher than that of the ``naive" procedure. However, we emphasize that this need not hold in general.

\begin{figure}
\begin{tabular}{cc}
\subfloat[Transformation-based testing procedure and $T=5{,}000$]{\includegraphics[width = 3.1in]{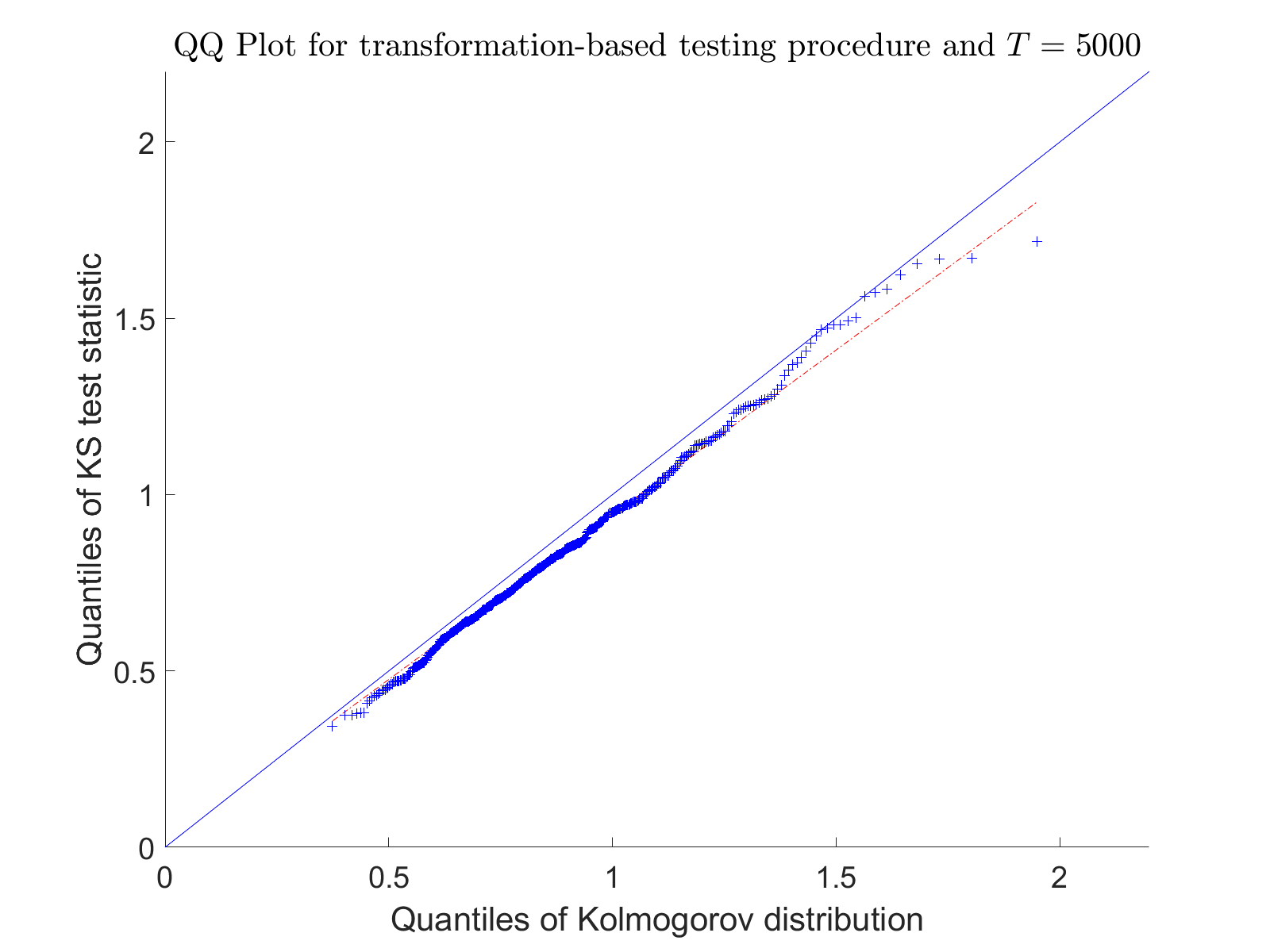}} &
\subfloat[``Naive" testing procedure and $T=5{,}000$]{\includegraphics[width = 3.1in]{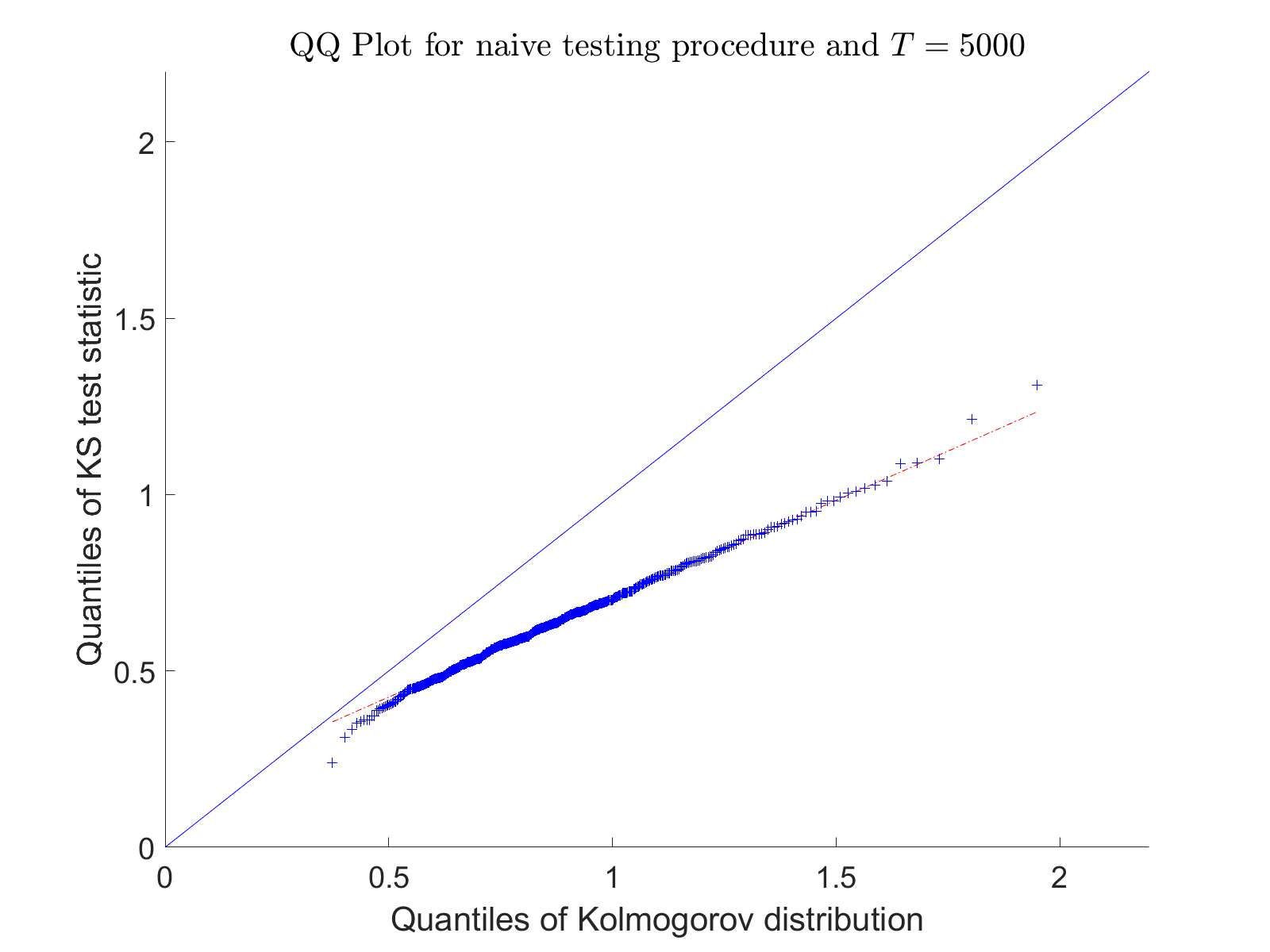}} \\
\subfloat[Transformation-based testing procedure and $T=50{,}000$]{\includegraphics[width = 3.1in]{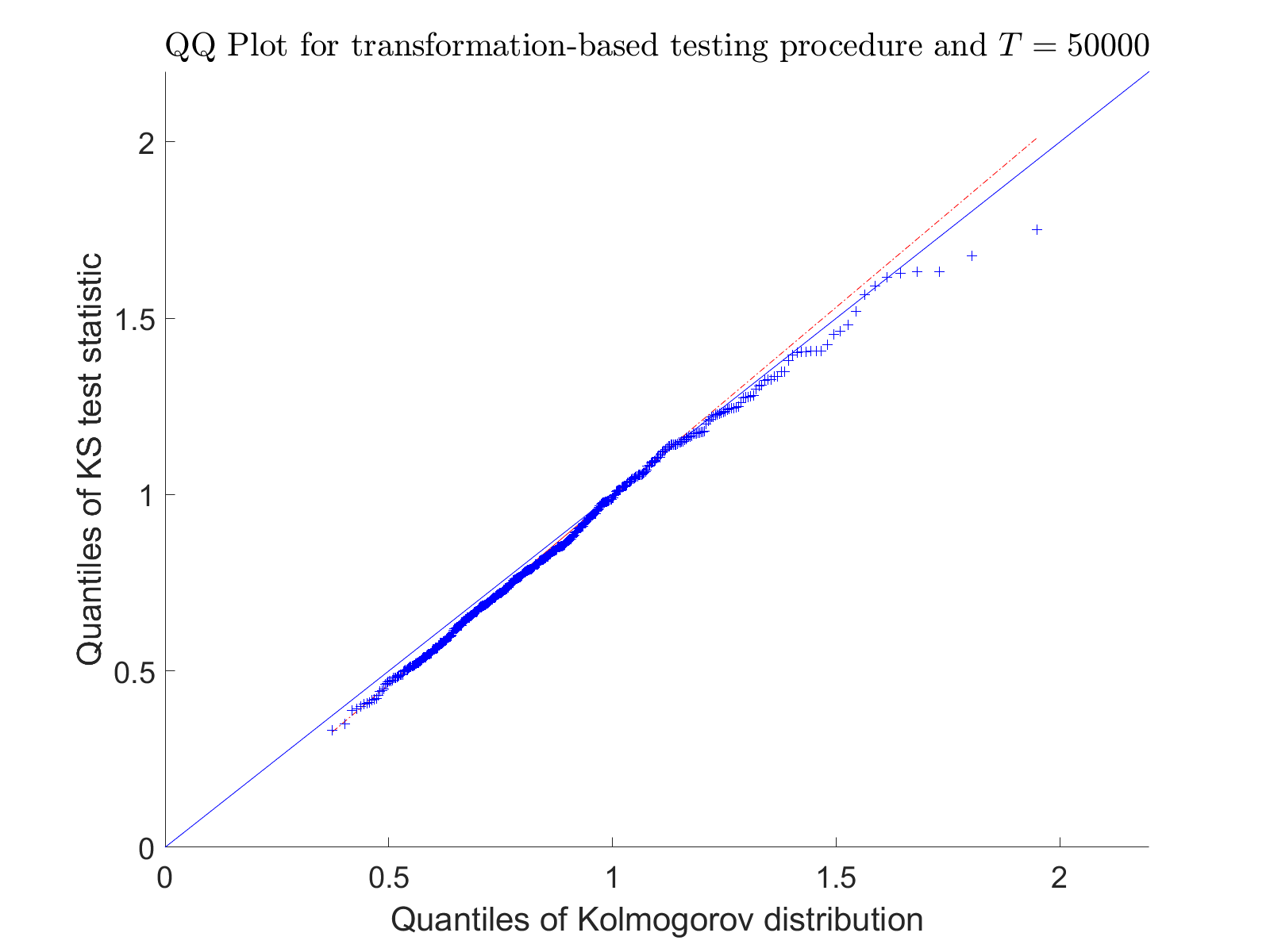}} &
\subfloat[``Naive" testing procedure and $T=50{,}000$]{\includegraphics[width = 3.1in]{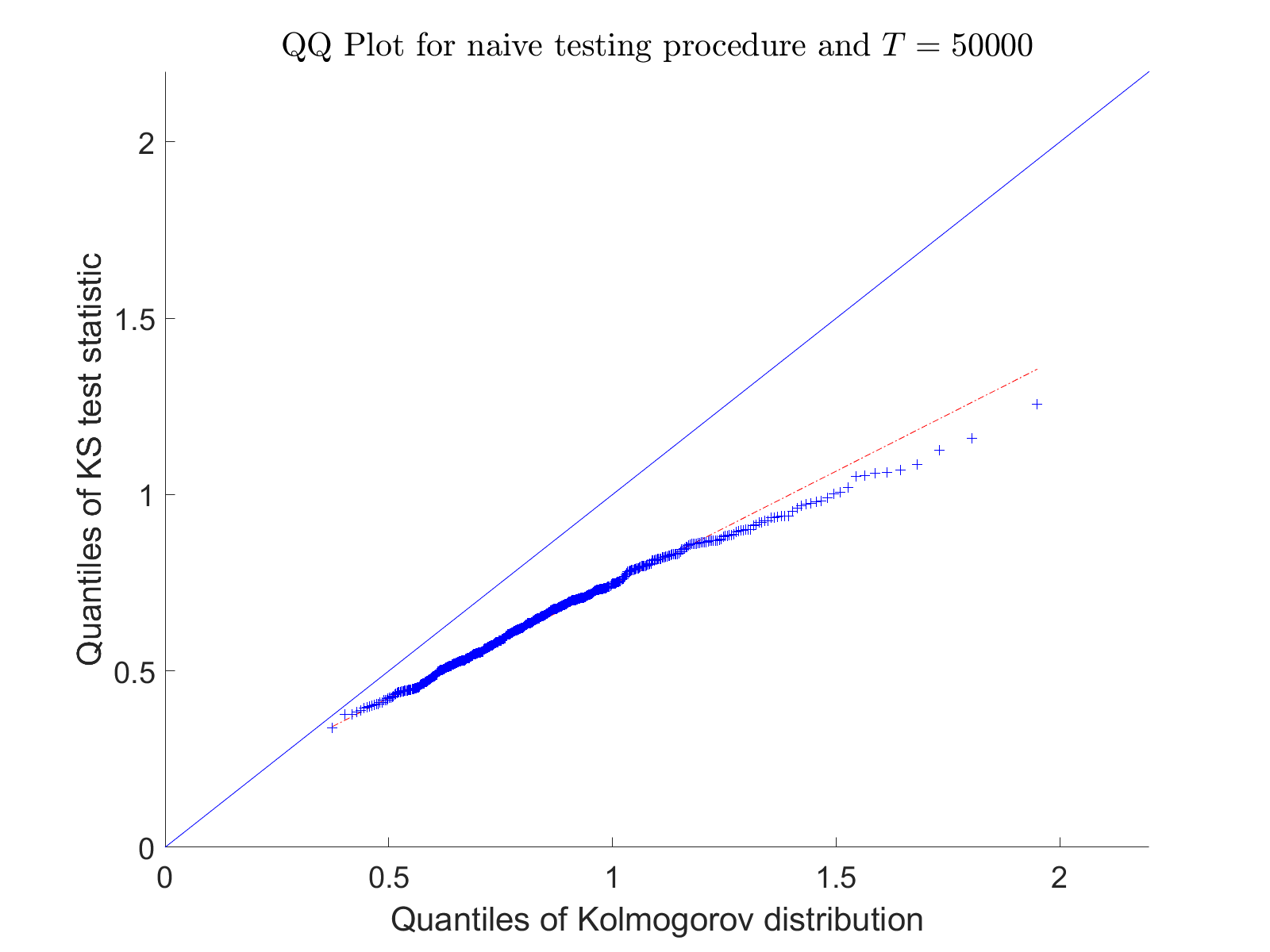}} 
\end{tabular}
\caption{\label{fig1} {\small We consider $500$ simulations of $N^{\mathrm{ExpH}}$ both for $T=5{,}000$ and $T=50{,}000$, and both for the transformation-based and the ``naive" testing procedures outlined in Section \ref{Section4.1}. 
We test for standard normal increments using the Kolmogorov-Smirnov test. 
We draw Q-Q plots for the empirical distribution of the test statistics vs.\ the theoretical limit distribution, i.e., the \emph{Kolmogorov distribution}. 
The dashed red line is a straight line fitted to the scatterplot, while the line $y=x$ is drawn solid blue.}}
\end{figure}

Suppose that in step (v) of the testing procedure described in Algorithm \ref{alg1}, we apply the Kolmogorov-Smirnov normality test. We can then compare the simulated Kolmogorov-Smirnov test statistics for $N^{\mathrm{ExpH}} \in H_0^{\mathrm{Exp}}$ with their theoretical null distribution given by the Kolmogorov distribution, using a Q-Q plot. 
Under the null, the Q-Q plot should be centered around the line $y = x$. 
We investigate this for $H_0^{\mathrm{Exp}}$ at both $T = 5{,}000$ and $T = 50{,}000$, comparing the transformation-based and ``naive" testing procedures (see Figure~\ref{fig1}). 
As expected from Sections~\ref{Section3} and~\ref{Section4.1}, the Q-Q plots for the transformation-based procedure align closely with the line $y = x$ (see Figures~\ref{fig1}(a) and (c)). 
Moreover, the deviation from $y = x$ is smaller at $T = 50{,}000$ compared to $T = 5{,}000$, which aligns with the asymptotic nature of our test as $T \to \infty$. 
By contrast, the ``naive" testing procedure yields different results. 
Since the standardized empirical process in \eqref{tilde W} corresponds to a Brownian motion with a linear drift and a random coefficient, the random variables $\tilde Z_i^{(T)}$ defined in (\ref{rv tildeZ}) are normally distributed, but not standard normal. 
This is reflected in the Q-Q plots for the ``naive" procedure (see Figures~\ref{fig1}(b) and~(d)), where the points form an approximately straight line but deviate from $y=x$.

Now, consider the alternative hypotheses. Detecting deviations from the null $H_0^{\mathrm{Exp}}$ varies in difficulty. 
As it turns out, the processes $N^{\mathrm{PLH}}$ and $N^{\mathrm{SN}}_{1,2,2}$ are challenging to distinguish from the null, while detecting $N^{\mathrm{SN}}_{1/5,10,2}$ is more feasible, and identifying $N^{\mathrm{Periodic}}$ is relatively easy. 
For $N^{\mathrm{SC}}$, the deviation from the null is straightforward to detect.

These differences can be explained as follows. 
First, $N^{\mathrm{PLH}}$ is a Hawkes process similar to those in $H_0^{\mathrm{Exp}}$, but with a power-law decay for the excitation kernel instead of exponential decay, leading to long memory. Detecting such differences in kernel shape is inherently difficult. 
Second, while the shot-noise process $N^{\mathrm{SN}}_{1,2,2}$ exhibits clustering behavior like the Hawkes process, it lacks self-excitation. 
Since its shot intensity $\mu = 1$ is high compared to $\beta = 2$, clusters frequently overlap, making it hard to distinguish between exogenous and endogenous clustering.
By contrast, $N^{\mathrm{SN}}_{1/5,10,2}$ has a lower shot intensity than $N^{\mathrm{SN}}_{1,2,2}$, resulting in more clearly separated clusters. 
Since cluster initiation in the shot-noise process is exogenous, we expect the null to be more easily rejected for $N^{\mathrm{SN}}_{1/5,10,2}$ than for $N^{\mathrm{SN}}_{1,2,2}$.
Now, consider the easier alternatives. 
The periodic Poisson process $N^{\mathrm{Periodic}}$ produces peaks in intensity, leading to concentrated event groups that resemble clusters. 
For small $T$, detecting this may be challenging due to the limited number of observed ``clusters". However, the regularity of these clusters should reveal that the process is not an overdispersed one. 
Finally, since the self-correcting process $N^{\mathrm{SC}}$ models behavior opposite to that of a Hawkes process, rejecting the null under this alternative should be quite easy.

Although Theorem~\ref{consistency thm} ensures that the power of the Kolmogorov-Smirnov, Anderson-Darling, and Cramér-von Mises tests converges to 1 as $T \to \infty$ under any alternative, the previous paragraph suggests that, in practice, detecting the deviations $N^{\mathrm{PLH}}$ and $N^{\mathrm{SN}}_{1,2,2}$ requires a sample $\omega_T$ over a \emph{large} time frame $[0,T]$.
To investigate this, we run 100 computationally intensive simulations for $H_0^{\mathrm{Exp}}$ versus $N^{\mathrm{PLH}}$ and $N^{\mathrm{SN}}_{1,2,2}$ for large values of $T$. 
The rejection rates for $N^{\mathrm{PLH}}$ are well above the significance levels at $T = 5 \cdot 10^5$, while for $N^{\mathrm{SN}}_{1,2,2}$, we need $T = 5 \cdot 10^6$ to observe similar behavior. 
These rejection rates are summarized in Table~\ref{tablebigT}.

\small
\begin{table}[t]\centering
\caption{{\small We report the number of rejections $R_{0.01}; R_{0.05}; R_{0.20}$ out of $100$, using significance levels $0.01; 0.05; 0.20$ in an experiment analogous to the one conducted for Table~\ref{table1}, using the parametric null hypothesis $H_0^{\mathrm{Exp}}$ and the alternative models $N^{\mathrm{PLH}}$ and $N^{\mathrm{SN}}_{1,2,2}$  for large time frames with $T=5\cdot10^5$ and $T=5\cdot10^6$, respectively.}}

\begin{tabular}{c | c c c | c c c}\label{tablebigT}
	&\multicolumn{3}{c}{Transformation-based testing procedure}&\multicolumn{3}{c}{``Naive" testing procedure}\\
Test&KS&CvM&AD&KS&CvM&AD\\
\hline
$N^{\mathrm{PLH}}, T=5\cdot10^5$			&3; 10; 35	&2; 16; 40	&3; 20; 41	&0; 3; 11	&0; 1; 13	&0; 5; 19	\\
$N^{\mathrm{SN}}_{1,2,2}, T=5\cdot10^6$	&10; 17; 40	&10; 18; 41	&15; 23; 42	&5; 10; 29	&4; 5; 21	&4; 9; 22	
\end{tabular}
\end{table}
\normalsize

We repeat our experiments for $H_0^{\mathrm{PL}}\ni N^{\mathrm{PLH}}$ consisting of univariate power-law Hawkes processes. 
We report our findings for $T=5{,}000$ and $T=50{,}000$ in Table~\ref{table3} and Table~\ref{table4}, respectively. 
The observed rejection rates are comparable to the ones in Tables~\ref{table1} and~\ref{table2} for $H_0^{\mathrm{Exp}}$. 
This confirms the results of Sections~\ref{Section3}--\ref{Section4.2} once more. 
Furthermore, it is possible to draw Q-Q plots analogous to Figure~\ref{fig1}, yielding comparable results.

\small
\begin{table}[t]\centering
\caption{{\small We report the number of rejections $R_{0.01}; R_{0.05}; R_{0.20}$ out of $500$, using significance levels $0.01; 0.05; 0.20$ in an experiment analogous to the one conducted for Table~\ref{table1}, but now with parametric null hypothesis $H_0^{\mathrm{PL}}$ and $T=5{,}000$.}}

\begin{tabular}{c | c c c | c c c}\label{table3}
	&\multicolumn{3}{c}{Transformation-based testing procedure}&\multicolumn{3}{c}{``Naive" testing procedure}\\
Test&KS&CvM&AD&KS&CvM&AD\\
\hline
$N^{\mathrm{PLH}}$			&3; 23; 100	&4; 20; 97	&6; 20; 100	&0; 0; 9	&0; 0; 3	&0; 0; 10	\\
$N^{\mathrm{ExpH}}$			&7; 22; 92	&6; 18; 90	&4; 14; 86	&0; 0; 4	&0; 0; 2	&0; 0; 3	\\
$N^{\mathrm{SN}}_{1,2,2}$		&9; 26; 96	&8; 29; 93	&5; 25; 92	&0; 1; 3	&0; 0; 5	&0; 0; 6	\\
$N^{\mathrm{SN}}_{1/5,10,2}$	&8; 47; 243	&4; 43; 298	&1; 41; 304	&0; 5; 140	&0; 11; 204	&0; 14; 245	\\
$N^{\mathrm{Periodic}}$		&15; 131; 417	&10; 169; 460	&3; 173; 475	&0; 46; 350	&1; 98; 431	&0; 111; 451	\\
$N^{\mathrm{SC}}$			&500; 500; 500	&500; 500; 500	&500; 500; 500	&500; 500; 500	&500; 500; 500	&500; 500; 500	\\
\end{tabular}
\end{table}

\begin{table}[t]\centering
\caption{{\small We report the number of rejections $R_{0.01}; R_{0.05}; R_{0.20}$ out of $500$, using significance levels $0.01; 0.05; 0.20$ in an experiment analogous to the one conducted for Table~\ref{table1}, but now with parametric null hypothesis $H_0^{\mathrm{PL}}$ and $T=50{,}000$.}}

\begin{tabular}{c | c c c | c c c}\label{table4}
	&\multicolumn{3}{c}{Transformation-based testing procedure}&\multicolumn{3}{c}{``Naive" testing procedure}\\
Test&KS&CvM&AD&KS&CvM&AD\\
\hline
$N^{\mathrm{PLH}}$			&4; 24; 99	&2; 19; 101	&2; 18; 100	&0; 0; 4	&0; 0; 3	&0; 0; 6	\\
$N^{\mathrm{ExpH}}$			&3; 28; 115	&4; 24; 110	&2; 21; 110	&0; 0; 10	&0; 0; 9	&0; 0; 9	\\
$N^{\mathrm{SN}}_{1,2,2}$		&3; 22; 96	&3; 22; 85	&2; 20; 87	&0; 0; 16	&0; 0; 7	&0; 0; 15	\\
$N^{\mathrm{SN}}_{1/5,10,2}$	&104; 334; 489	&123; 412; 498	&244; 469; 500	&43; 269; 481	&87; 378; 496	&204; 468; 498	\\
$N^{\mathrm{Periodic}}$		&431; 498; 500	&487; 500; 500	&499; 500; 500	&409; 499; 500	&493; 500; 500	&499; 500; 500	\\
$N^{\mathrm{SC}}$			&500; 500; 500	&500; 500; 500	&500; 500; 500	&500; 500; 500	&500; 500; 500	&500; 500; 500	\\
\end{tabular}
\end{table}
\normalsize

\begin{remark}
Our results hold as well for null hypotheses different than the self-exciting ones considered here. For example, for a null hypothesis consisting of periodic Poisson processes $N^{\mathrm{Periodic}}_{\mu,\alpha,\beta,\gamma}$, we run $500$ simulations under $N^{\mathrm{Periodic}}$, with $T=5{,}000$ and using the Anderson-Darling normality test. Using significance levels $0.01$, $0.05$ and $0.20$, this yields $10$, $32$ and $104$ rejections when using the transformation-based testing procedure, and $0$, $0$ and $5$ rejections when using the ``naive" testing procedure, respectively.
\end{remark}

Note that the  Kolmogorov-Smirnov, Anderson-Darling and Cramér-von Mises tests all yield goodness-of-fit tests of the right size when applied in step (v) of the transformation-based testing procedure outlined in Algorithm \ref{alg1}. Since in our simulations the power under alternatives is highest for Anderson-Darling and lowest for Kolmogorov-Smirnov, we advice to use the former in applications. We will do so in the next section.

We conclude by examining the random-time-change-based test discussed in Section \ref{sectionRTC}, which, like the ``naive" testing procedure in this section, ignores estimation uncertainty. We fit the same six processes as before to both $H_0^{\mathrm{Exp}}$ and $H_0^{\mathrm{PL}}$, using time frame $[0,T]$ for $T = 5{,}000$, and then perform the goodness-of-fit test outlined in Section \ref{sectionRTC}. The results are summarized in Table \ref{tableGoF}.  
We observe that the random-time-change-based test, similar to the ``naive" procedure, appears undersized. Additionally, the power under any alternative seems to be lower than that of the asymptotically exact test presented in Section \ref{Section4.1}. Thus, the random-time-change-based test shares the shortcomings of the ``naive" testing procedure.

\small
\begin{table}[t]\centering
\caption{{\small We report the number of rejections $R_{0.01}; R_{0.05}; R_{0.20}$ out of $500$, using significance levels $0.01; 0.05; 0.20$ in an experiment similar to the one conducted for Table~\ref{table1}, but now using the testing procedure described in Section \ref{sectionRTC}, with $T=5{,}000$.}}

\begin{tabular}{c | c c c | c c c}\label{tableGoF}
	&\multicolumn{3}{c}{$H_0^{\mathrm{Exp}}$}&\multicolumn{3}{c}{$H_0^{\mathrm{PL}}$}\\
Test&KS&CvM&AD&KS&CvM&AD\\
\hline
$N^{\mathrm{ExpH}}$			&0; 0; 3	&0; 0; 0	&0; 0; 1	&0; 5; 91	&0; 1; 78	&0; 9; 121	\\\
$N^{\mathrm{PLH}}$			&0; 4; 49	&0; 0; 36	&0; 3; 59	&0; 0; 1	&0; 0; 1	&0; 0; 2	\\
$N^{\mathrm{SN}}_{1,2,2}$		&0; 5; 71	&0; 0; 31	&0; 0; 23	&3; 52; 255	&0; 8; 195	&0; 8; 164	\\
$N^{\mathrm{SN}}_{1/5,10,2}$	&3; 66; 239	&0; 13; 212	&5; 51; 271	&0; 0; 7	&0; 0; 2	&0; 1; 9	\\
$N^{\mathrm{Periodic}}$		&44; 165; 349	&26; 163; 391	&43; 193; 406	&500; 500; 500	&500; 500; 500	&500; 500; 500	\\
$N^{\mathrm{SC}}$			&500; 500; 500	&500; 500; 500	&500; 500; 500	&500; 500; 500	&500; 500; 500	&500; 500; 500	\\
\end{tabular}
\end{table}
\normalsize


\section{Data analysis}\label{Section5.2}
In this section, we illustrate the testing procedure outlined in Algorithm \ref{alg1} by applications to real-world data. In particular, we compare the goodness-of-fit tests conducted in  \cite{Ogata2}  and \cite{Schoenberg} to our asymptotically exact test.

\subsection{Temporal ETAS model}\label{OgataDatasection}
First, we apply the testing procedure from Algorithm \ref{alg1} to earthquake data used in \cite{Ogata2}, the seminal paper that established a framework for applying Omori's law and the Gutenberg-Richter law to earthquake data. Building on this temporal model, subsequent years saw the development of the more advanced spatiotemporal ETAS (Epidemic Type Aftershock Sequence) model, which has become a cornerstone in earthquake modeling. For further developments and applications, see, e.g., \cite{Adelfio, Ogata3, Reinhart, Veen}.

We consider the earthquake data set used in \cite{Ogata2}, Table 1, consisting of shallow earthquakes of less than $100$ kilometers depth, of magnitude at least $6.0$ on the Richter scale, in the Off Tohoku district east of Japan. This is the polygonal region with vertices at  ($42$\degree N,$142$\degree E), ($39$\degree N, $142$\degree E), ($38$\degree N, $141$\degree E), ($35$\degree N, $140.5$\degree E), ($35$\degree N, $144$\degree E), ($39$\degree N, $146$\degree E), and ($42$\degree N, $146$\degree E); see \cite{Ogata2}, Fig.\ $1$ for a map. This region is one of the most seismically active areas in Japan. The data set spans the period between $1885$ and $1980$, and consists of $483$ earthquakes satisfying the depth and magnitude conditions. 

The earthquake dynamics are modeled by marked point processes, with time stamps $t_i$ corresponding to the time of the earthquake occurrences measured in days after January 1st, 1885, and marks $m_i$ corresponding to earthquake magnitudes measured on the Richter scale, assumed to be independent of $t_i$.
Since the data are recorded with a daily temporal resolution, the onset time for each earthquake was randomly assigned from a uniform distribution within the corresponding day.
More specifically, \cite{Ogata2} considers marked Hawkes processes with conditional intensity functions
\begin{equation}\label{Hawkesgeneral}
\lambda(t)=\mu+\sum_{t_i<t}c(m_i)g(t-t_i).
\end{equation}
Based on the Akaike information criterion, the model with $g(t)=K/(t+c)$ and $c(m)=e^{\beta(m-M)}$ is selected. Here $M=6.0$ is the cut-off magnitude. The decay kernel $g(t)$ and the mark function $c(m)$ are theoretically motivated by Omori's law and the Gutenberg-Richter law, respectively. This gives the conditional intensity function 
\begin{equation}\label{HawkesOmori}
\lambda_\theta(t)=\lambda_{\mu,K,c,\beta}(t)=\mu+\sum_{t_i<t}e^{\beta(m_i-M)}\frac K{t-t_i+c}.
\end{equation}
From \cite{Ogata2}, Table 3, the maximum likelihood estimators for the parameters are 
\[
\hat\theta=(\hat\mu,\hat K,\hat c,\hat\beta)=(.00536,.017284,.01959,1.61385).
\] 
Having selected this ``best" model, it is noted that it is still possible that there exists a more suitable model. To assess the suitability of the model (\ref{HawkesOmori}), \cite{Ogata2} considers the transformed event times $(\tau_i)_{i\in[483]}$, where $\tau_i=\Lambda_{\hat\theta}(t_i)$, and where $\Lambda_\theta(t)=\int_0^t\lambda_\theta(s)\ \mathrm ds$ is the \emph{compensator} of the point process. By the random time change theorem, \cite{DaleyVereJones}, Theorem 7.4.I, it follows that the transformed times $(\Lambda_{\theta_0}(t_i))_{i\in[483]}$ using the \emph{true} parameter $\theta_0$ form a realization of a stationary Poisson process of unit intensity. Therefore, it is noted in \cite{Ogata2} that if $\lambda_{\hat\theta}$ is a good  approximation of $\lambda_{\theta_0}$, then  $(\tau_i)_{i\in[483]}$ is expected to behave like a stationary Poisson process of unit intensity. However, from Section \ref{sectionRTC} we know that even if $\lambda_{\hat\theta}$ is a good  approximation of $\lambda_{\theta_0}$, the difference between $(\Lambda_{\hat\theta}(t_i))_{i\in[483]}$ and $(\Lambda_{\theta_0}(t_i))_{i\in[483]}$ is not necessarily neglectable: ignoring estimation uncertainty might lead to inaccurate tests. 
 
 In \cite{Ogata2}, the null hypothesis that  $(\tau_i)_{i\in[483]}$ is a realization of a Poisson process of unit intensity is tested with the aid of a Kolmogorov-Smirnov test; see \cite{Ogata2}, Figure 9. One could use the transformed times $(\tau_i)_{i\in[483]}$ to construct the interarrival times $(x_i)_{i\in[482]}$, where $x_i:=\tau_{i+1}-\tau_i$ are standard exponentially distributed under the null. Following \cite{Ogata2}, we perform a Kolmogorov-Smirnov test for the null hypothesis stating that $(x_i)_{i\in[482]}$ is a sample from a standard exponential distribution. This yields a $p$-value of $.5756$. Hence, for any reasonable significance level, the null is not rejected, indicating a seemingly good fit of the model to the data.
 
  \begin{figure}
\centering
  \includegraphics[width=1\linewidth]{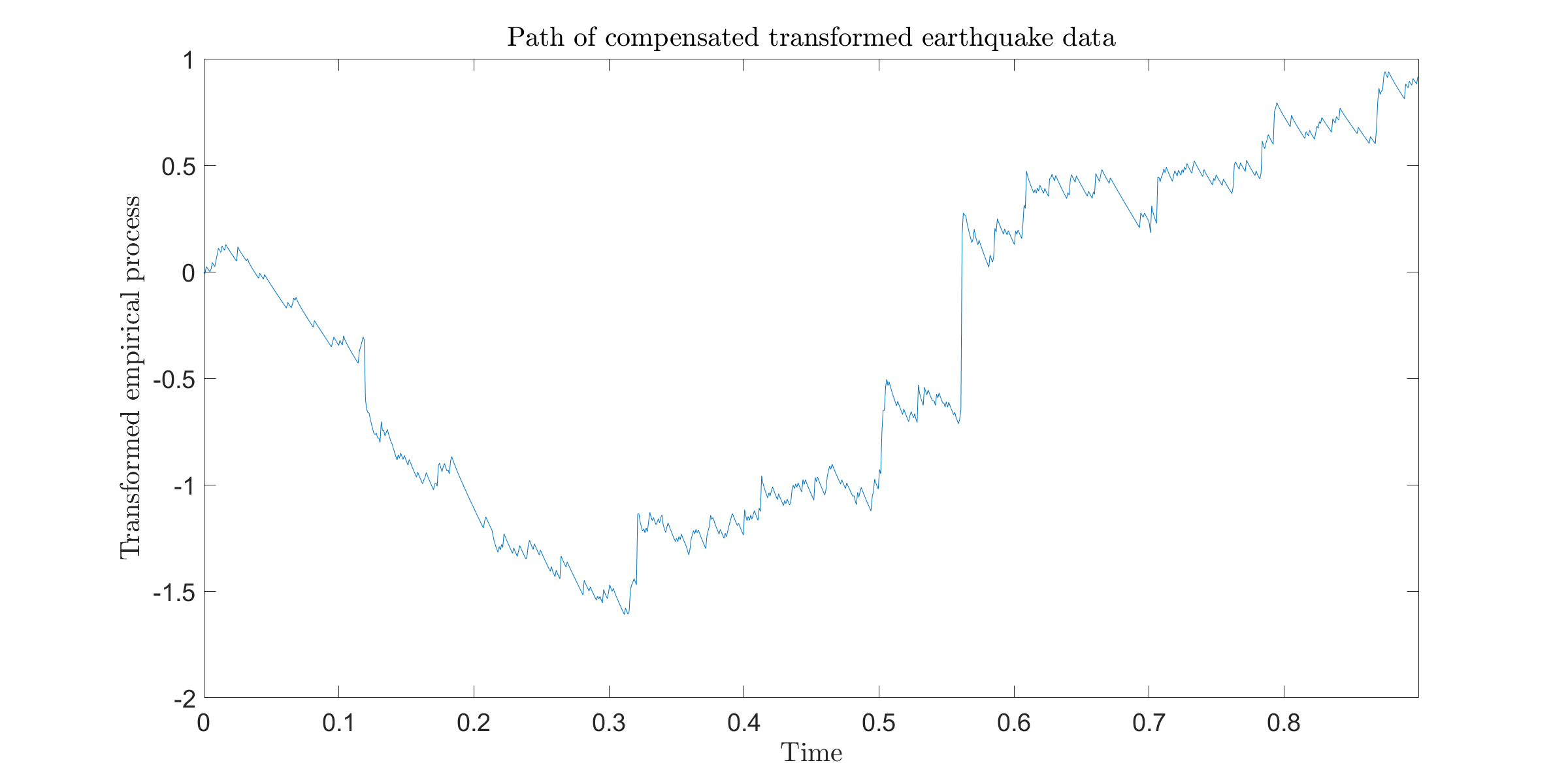}
  \caption{Path of the transformed empirical process under the null hypothesis that the intensity of earthquake occurences satisfies (\ref{HawkesOmori}). The plot exhibits deviations from the expected behavior of a standard Brownian motion, including unexpected jumps, suggesting potential inadequacies in the temporal ETAS model's fit to the data.} 
   \label{fig2}
\end{figure}
 
We examine the situation in greater detail, accounting for the estimation uncertainty in $\hat\theta$ by performing the asymptotically exact transformation-based test introduced in Algorithm \ref{alg1}. Specifically, the transformed empirical process is calculated using (\ref{transformation eq}) as outlined in step (iii) of the procedure, and its path is shown in Figure \ref{fig2}. Under the null hypothesis that the true intensity satisfies (\ref{HawkesOmori}), this figure should resemble a standard Brownian motion.  Performing the transformation-based testing procedure involves selecting a hyperparameter $n$ and a normality test for step (v) of the procedure. Unlike the setup in Section \ref{Section4.1}, where there is approximately one event per time unit (days), here the choice $n=\mathrm{ceil}\left(\sqrt{N[0,T]}/4\right)=6$ instead of $n=\mathrm{ceil}(\sqrt{T}/4)$ is more appropriate. Using the Anderson-Darling normality test suggested at the end of Section \ref{Section5.1}, we obtain a $p$-value of $0.0568$ (and $0.2475$ for the ``naive" testing procedure). 

The relatively low $p$-value and the unexpected jumps in Figure \ref{fig2} raise concerns about the adequacy of the model's fit. While the $p$-value does not lead to rejection at a typical significance level such as $\alpha = 0.05$, it suggests that the model may not fully capture the underlying dynamics of the data. This is consistent with the expectation that the temporal ETAS model might overlook certain features better accounted for in the spatiotemporal version. As we have seen, a test based on random time change, ignoring estimation uncertainty, yields a much higher $p$-value, indicating a seemingly good fit. These discrepancies highlight that  ignoring estimation uncertainty could lead to inaccurate procedures with unwarranted confidence in an otherwise imperfect model.


\subsection{A recursive point process model}\label{Schoenbergdatasection}
More recently, in 2019, a variant of the Hawkes process was introduced in \cite{Schoenberg}, having the property that the expected offspring size corresponding to an event is inversely related to the conditional intensity at the time that event occurred. 
This ``recursive point process model" is based on the idea that when a disease occurs infrequently within a population, as seen in the initial phases of an outbreak, individuals with the disease are likely to transmit it at a higher rate. In contrast, as the disease becomes more widespread, the transmission rate diminishes due to the implementation of preventive measures and the growing proportion of individuals with prior exposure. The model is described as recursive because the conditional intensity at any given moment depends on the productivity of earlier points, which itself is influenced by the conditional intensity at those same points.

This recursive model is specified through its conditional intensity
\begin{equation}\label{recursive intensity}
\lambda(t)=\mu+\sum_{t_i<t}H(\lambda(t_i))g(t-t_i),
\end{equation}
with $\mu>0$, where $H:(0,\infty)\to[0,\infty)$ is assumed to be a non-increasing function and where $g:[0,\infty)\to[0,\infty)$ is a density function. Note that this model is self-exciting, and can be seen as a variant of the classical Hawkes process. A parametric specification is chosen where $H$ satisfies a power law and where $g$ is an exponential density. More specifically, $H(x)=\kappa/x^\alpha$ and $g(u)=\beta\exp(-\beta u)$, yielding a parametric model depending on $\theta=(\mu,\kappa,\beta,\alpha)$, where all parameters are assumed to be positive. 

In \cite{Schoenberg}, Section 7, this model is fitted to data consisting of $67$ recorded cases of Rocky Mountain Spotted Fever (RMSF) in California from 1960 to 2011.
The data consist of time stamps marking the beginning of each week during which at least one infection occurred. Time is recorded in days since January 1st, 1960.
The freely accessible data were obtained from Project Tycho (www.tycho.pitt.edu), see \cite{VanPanhuis2018}. As in \cite{Schoenberg}, Section 7, since the data are recorded with a weekly temporal resolution, the onset time for each individual case was randomly assigned from a uniform distribution within the corresponding 7-day interval. In this section, we replicate the model fitting from \cite{Schoenberg}, Section 7, yielding the following parameter estimates for the California data set:\footnote{A close inspection of the data set obtained from Project Tycho, in comparison with the histogram presented in \cite{Schoenberg}, Figure 2, reveals a minor discrepancy between the two data sets, which cannot be attributed to a different realization of the uniform distribution within the corresponding 7-day interval. It is likely due to a correction in the data, and may account for the difference between our parameter estimates and those reported by Schoenberg.}
\begin{equation}
(\hat\mu_{CA},\hat\kappa_{CA},\hat\beta_{CA},\hat\alpha_{CA})=(0.000415,0.0775,0.00562,0.473).
\end{equation}
Additionally, we collect data for the state of Florida over the same time span, from 1960 to 2011, consisting of $191$ recorded disease cases. Fitting the model from \cite{Schoenberg}, Section 7, to the Florida data set yields the following parameter estimates:
\begin{equation}
(\hat\mu_{FL},\hat\kappa_{FL},\hat\beta_{FL},\hat\alpha_{FL})=(0.000777,0.833,0.0110,0.0408).
\end{equation}

In \cite{Schoenberg}, Section 7, the model fit to the California data is assessed using super-thinned residuals. This approach is based on principles closely related to those underlying the random-time-change-based test described in Section \ref{SectionRTC}. Notably, both methods rely on plugged-in estimates for the model parameters and do not account for estimation uncertainty, which can compromise the validity of the resulting inference.

In this section, rather than using super-thinned residuals, we apply a classical goodness-of-fit test based on the random time change, as outlined in Section~\ref{sectionRTC}. 
After fitting the model, we transform the interarrival times $t_{i+1}-t_i$ using the estimated compensator, yielding the transformed values $\Lambda_{\hat\theta}(t_{i+1})-\Lambda_{\hat\theta}(t_i)$. 
We then test whether these transformed interarrival times follow a standard exponential distribution. 
The corresponding QQ-plots, shown in Figure~\ref{fig3}, include the reference line $y=x$, around which the transformed interarrival times should be centered under the null hypothesis. 
For the California data set, a Kolmogorov–Smirnov test yields a $p$-value of $0.3184$, which aligns with the findings in \cite{Schoenberg}, Figure 3. 
While there is a slight deviation from the reference line $y=x$ in Figure~\ref{fig3}(A), it remains within the Kolmogorov–Smirnov confidence bounds, and we therefore do not reject the null hypothesis at the $0.05$ significance level. 
For the Florida data set, the deviation from the reference line appears more pronounced than for the California data; nonetheless, the Kolmogorov–Smirnov test still does not reject the null hypothesis, with a $p$-value of $0.4836$.

\begin{figure}
\begin{tabular}{cc}
\subfloat[California data set]{\includegraphics[width = 3.1in]{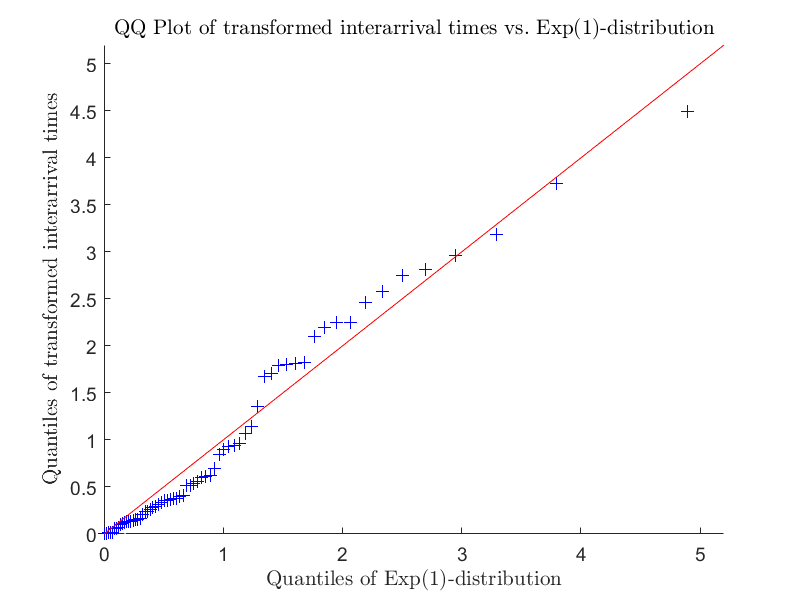}} &
\subfloat[Florida data set]{\includegraphics[width = 3.1in]{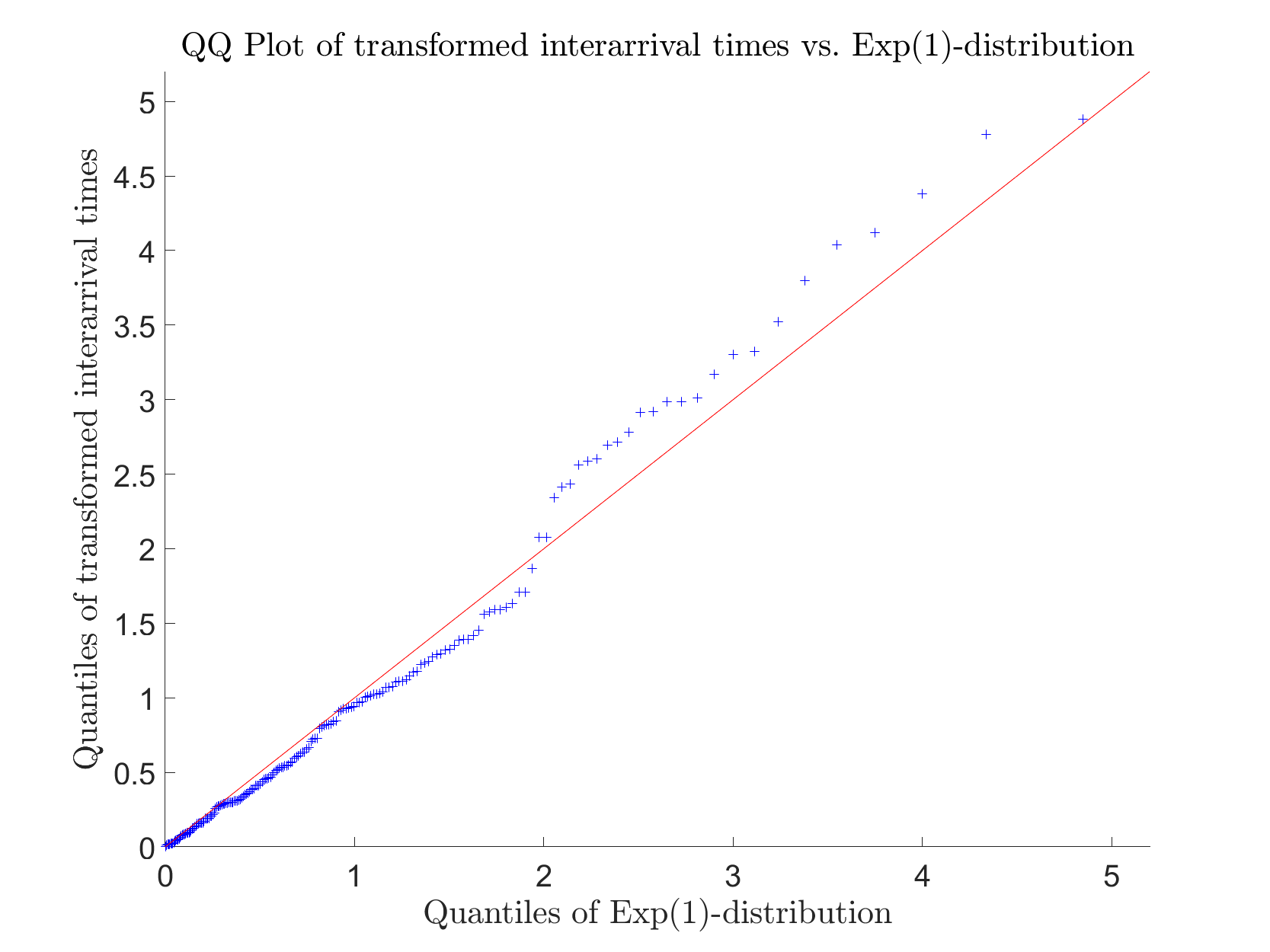}}
\end{tabular}
\caption{\label{fig3} For both the California (a) and the Florida (b) data sets we fit the model for the data, after which we transform arrival times and we test for standard exponentiality of the interarrival times. There appear to be slight deviations from $y=x$, especially for the tail of the Florida data set.}
\end{figure} 

Next, we account for the estimation uncertainty in $\hat\theta$ by applying the asymptotically exact transformation-based test described in Algorithm~\ref{alg1}. We set the hyperparameter $n=\max\left\{\mathrm{ceil}\left(\sqrt{N[0,T]}/4\right),6\right\}=6$ for both the California data ($n([0,T])=67$) and the Florida data ($n([0,T])=191$). Following the procedure, we perform an Anderson–Darling test for normality in step (v), as suggested in Section~\ref{Section5.1}.

For the California sample, this test yields a $p$-value of $0.5193$ (compared to $0.8833$ for the ``naive" testing procedure). These results are consistent with the conclusions of the random-time-change-based tests above, at the $\alpha=0.05$ level. In contrast, for the Florida sample, the $p$-value is $0.0185$ (compared to $0.1979$ for the ``naive" testing procedure), leading us to reject the null hypothesis when accounting for estimation uncertainty. In fact, those conclusions remain to hold if instead of an Anderson-Darling test we apply Cramér-von Mises or Kolmogorov-Smirnov tests. See Table~\ref{pvaluestab}.

Although for the California data the asymptotically exact testing procedure in Algorithm~\ref{alg1} leads to the same conclusion as the procedures that ignore estimation uncertainty, the outcome is notably different for the Florida data: Algorithm~\ref{alg1} leads to a rejection of the null hypothesis, whereas the naive procedures do not. As discussed in Section~\ref{Section5.1}, tests that ignore estimation uncertainty are typically undersized and have limited power under alternatives. While the model appears to fit the California data reasonably well --- even though the sample size $n([0,T])=67$ is quite modest --- the seemingly adequate fit suggested by the naive tests for the Florida data is contradicted by the asymptotically exact procedure. In typical settings where a new model is being evaluated, neglecting estimation uncertainty can result in misleading conclusions and overconfidence in an otherwise imperfect model. The contrasting outcomes for the Florida data set highlight the importance of accounting for estimation uncertainty to avoid erroneous inference.

\begin{table}[t]\centering
\caption{{\small We report $p$-values for the three goodness-of-fit testing procedures outlined in Section \ref{SectionGoF}, for both the California and for the Florida data set, and using Kolmogorov-Smirnov, Cramér-von Mises or Anderson-Darling tests for standard normality or exponentiality of the obtained sample. 
An asterisk in the superscript denotes significance at $\alpha=0.05$.}}

\begin{tabular}{c | c c c | c c c}\label{pvaluestab}
	&\multicolumn{3}{c}{California data}&\multicolumn{3}{c}{Florida data}\\
Test&KS&CvM&AD&KS&CvM&AD\\
\hline
Asymptotically exact testing procedure &.5371  &.4599      &.5193  &.0479$^*$  &.0116$^*$      &.0185$^*$\\
``Naive" testing procedure         &.8435  &.8218      &.8833  &.0965$\phantom{^*}$     &.1649$\phantom{^*}$     &.1979$\phantom{^*}$\\\
Random-time-change based  test            &.3104  &.3319 &.1895  &.4836$\phantom{^*}$  &.2770$\phantom{^*}$ &.0759$\phantom{^*}$
\end{tabular}
\end{table}

It is important to note that the recursive model defined in (\ref{recursive intensity}) is self-exciting, making it a plausible choice for modeling diseases that spread directly between members of a population. However, RMSF is transmitted to humans via ticks and is not contagious between humans. Although tick bites might exhibit temporal clustering, such as spikes during favorable conditions like good hiking weather, we do not expect one tick bite to trigger another. Instead, tick bites likely follow a shot-noise type of intensity, with external factors driving exposure likelihood. As such, it is not surprising that the recursive model does not perfectly fit RMSF infection data.
Given the challenges outlined in Section \ref{Section5.1} regarding separating shot-noise from self-exciting dynamics, it is not surprising that neither the random-time-change-based test nor our asymptotically exact test rejects the null hypothesis when applied to a sample of just 67 observations.

This recursive model, however, may be particularly well-suited for diseases like COVID-19, influenza, and Ebola, which are transmissible between humans. In such scenarios, a higher number of infections (hence, a higher conditional intensity) typically triggers stricter preventive measures by policymakers or individuals, thereby decreasing the expected number of secondary infections (or the so-called ``$R$-number"). This dynamic is accounted for by the recursive model but is not captured by the traditional self-exciting model.

\section{Concluding remarks}\label{SectionConclusion}
In this work, we introduced an asymptotically distribution-free test process for point processes, which can be used to construct asymptotically distribution-free goodness-of-fit tests. Such tests are straightforward to implement and address the limitations of tests that ignore estimation uncertainty, which are often undersized and lack power.

Several extensions and follow-up questions arise.
\begin{itemize}
\item Although our primary focus has been on \(d\)-dimensional point processes for \(d \in \mathbb{N}\), it is worth noting that our results extend naturally to spatiotemporal point processes defined on an infinite spatial domain \( \mathscr{X} \), without introducing significant challenges. In this case, one would work with a conditional intensity function $\lambda_\theta(s,t)$ and analyze space-integrated quantities, assuming the regularity conditions required for the central limit theorem $\sqrt{T}(\theta_0 - \hat{\theta}_T) \stackrel{d}{\to} Z$, see \cite{MLEinfinitedimensional}. Additionally, if $\alpha_\theta := \lim_{T \to \infty}\frac1T \int_0^T \int_{\mathscr{X}} \frac{\partial \lambda_\theta(s,t)}{\partial \theta} \, \mathrm{d}s \, \mathrm{d}t$ exists almost surely for $\theta = \theta_0$, one can derive a version of Corollary \ref{step1+2}. This results in a Wiener process with variance $\mathbb{E}[N_{\theta_0}([0,1], \mathscr{X})]$ and drift $\alpha_{\theta_0}Z$, which is in the suitable form for applying an innovation martingale transformation. Therefore, we can proceed as in Section \ref{sec3.3} and beyond.  This framework allows for the construction of an asymptotically distribution-free goodness-of-fit test for spatiotemporal point processes.
\item We have assumed that the null hypothesis corresponds to a family $\mathscr{F}_{\Theta}$ of \emph{stationary} point processes. However, in practice, one may wish to include \emph{nonstationary} models, such as Hawkes processes with time-varying baseline intensities. This extension is substantially more delicate than the stationary case. Adapting our methodology would require a FCLT for nonstationary, non-ergodic martingales; that is, a nonstationary analogue of Lemma~\ref{FCLTmartingales}. Moreover, the approach relies on the availability of a central limit theorem for a suitable estimator; see~\cite{ChenHall2013,Lavancier2021} for progress in this direction.
\item 
In this work, we employ the increments of the test process constructed in step (iii) of Algorithm \ref{alg1}. The central theoretical justification for Algorithm \ref{alg1} lies in the weak convergence of the test process in step (iii) to a standard Wiener process as $T \to \infty$. This convergence permits a broad class of asymptotically valid test procedures: step (iv) may be reformulated as the application of an arbitrary continuous functional to the process in step (iii), while step (v) entails evaluating the distribution of this functional under the null via its counterpart applied to a standard Wiener process. This framework facilitates the construction of a wide range of asymptotically distribution-free goodness-of-fit tests.
\item As noted in Section \ref{sectionRTC}, if a consistent estimate of $\alpha_{\theta_0}Z$ could be obtained from the data, it would be possible to modify \eqref{diff transformed interarrival times} to develop an asymptotically distribution-free random-time-change-based goodness-of-fit test. 
However, solving this problem is far from straightforward due to the dependence of $\alpha_{\theta_0}Z$ on the unobservable quantity $\Lambda_{\theta_0}(t_{i}) - \Lambda_{\theta_0}(t_{i-1})$ in (\ref{diff transformed interarrival times}). 
Despite this challenge, this remains an intriguing direction for future research.
\end{itemize}

\section*{Appendix}\label{app A}

For the following set of assumptions, let
$
\mathcal H_{0,t}:=\sigma(N_\theta(s):s\in([0,t]),
$
and let $\lambda_\theta^*$ be any $(\mathcal H_t)_{t\in\mathbb R}$-predictable function such that
\begin{equation*}
\lambda_\theta^*(t)=\lim_{\Delta t\downarrow0}\frac1{\Delta t}\mathbb P\left(N_\theta[t,t+\Delta t)>0|\mathcal H_{0,t}\right).
\end{equation*}

\begin{customthm}{C.1(v)}\label{MLEassv}\phantom.
\begin{enumerate}[label=(\alph*)]
\item $\mathbb E[\sup_{\delta\in(0,1]}\delta^{-1}(N[0,\delta])^2]<\infty$.
\item For each $\theta\in\Theta$, there exists a neighborhood $U=U(\theta)\subset\Theta$ of $\theta$ such that if \begin{align*}
H(t,\omega)&:=\max_{\substack{i,j,k\in[m]\\\ell\in[d]}}\sup_{\theta'\in U}\left|\frac{\partial^3\lambda_{\theta'}^{(\ell)}}{\partial\theta_i\partial\theta_j\partial\theta_k}\right|;\\
G(t,\omega)&:=\max_{\substack{i,j,k\in[m]\\\ell\in[d]}}\sup_{\theta'\in U}\left|\frac{\partial^3\log\lambda_{\theta'}^{(\ell)}}{\partial\theta_i\partial\theta_j\partial\theta_k}\right|,
\end{align*}
then $\mathbb E[H(0,\omega)]<\infty$ and $\mathbb E[\|\lambda_{\theta_0}(0,\omega)\|^2G(0,\omega)^2]<\infty$.
\item For each $\theta\in\Theta$, $i,j\in[m]$, the following tend to zero in probability as $t\to\infty$;
$$ 
\lambda_\theta-\lambda_\theta^*,\quad \frac{\partial\lambda_\theta}{\partial\theta_i}-\frac{\partial\lambda_\theta^*}{\partial\theta_i}\quad\text{and}\quad\frac{\partial^2\lambda_\theta}{\partial\theta_i\theta_j}-\frac{\partial^2\lambda_\theta^*}{\partial\theta_i\theta_j}.
$$
\item For each $\theta\in\Theta$, there is some $\alpha>0$ such that the following have finite, uniformly  (w.r.t.\ $t$) bounded $(2+\alpha)$th moments
$$
\frac{\lambda_\theta}{\lambda_\theta^*},\quad\frac1{\lambda_\theta^*}\frac{\partial\lambda^*_\theta}{\partial\theta_i}\frac{\partial\lambda^*_\theta}{\partial\theta_j}\quad\text{and}\quad\frac{\partial^2\lambda^*_\theta}{\partial\theta_i\partial\theta_j},\quad i,j\in[m].
$$
\item For each $\theta\in\Theta$, $i\in[m]$, as $T\to\infty$
\begin{align*}
&\mathbb E\left[\frac1{\sqrt T}\int_0^T\left|\frac{\partial\lambda_\theta}{\partial\theta_i}-\frac{\partial\lambda_\theta^*}{\partial\theta_i}\right|\ \mathrm dt\right]\to0;\\
&\mathbb E\left[\frac1{\sqrt T}\int_0^T\left|\lambda_\theta-\lambda_\theta^*\right|\ \left|\frac1{\lambda_\theta^*}\frac{\partial\lambda^*_\theta}{\partial\theta_i}\right| \mathrm dt\right]\to0.
\end{align*}
\item For each $\theta\in\Theta$, there exists a neighborhood $U=U(\theta)\subset\Theta$ of $\theta$ such that 
$$\max_{\substack{i,j,k\in[m]\\\ell\in[d]}}\sup_{\theta'\in U}\left|\frac{\partial^3\lambda_{\theta'}^{(\ell)}(t)}{\partial\theta_i\partial\theta_j\partial\theta_k}-\frac{\partial^3(\lambda_{\theta'}^*)^{(\ell)}(t)}{\partial\theta_i\partial\theta_j\partial\theta_k}\right|\stackrel{\mathbb P}\to0\text{ as }t\to\infty.$$
\item For each $\theta\in\Theta$, there exists a neighborhood $U=U(\theta)\subset\Theta$ of $\theta$ and some $\alpha>0$ such that 
$$
\max_{\substack{i,j,k\in[m]\\\ell\in[d]}}\sup_{\theta'\in U}\frac{\partial^3(\lambda_{\theta'}^*)^{(\ell)}}{\partial\theta_i\partial\theta_j\partial\theta_k}\quad\text{and}\quad \max_{\substack{i,j,k\in[m]\\\ell\in[d]}}\sup_{\theta'\in U}\frac{\partial^3\log(\lambda_{\theta'}^*)^{(\ell)}}{\partial\theta_i\partial\theta_j\partial\theta_k}
$$ have finite, uniformly (w.r.t.\ $t$) bounded  $(2+\alpha)$th moments.
\end{enumerate}
\end{customthm}

{\small

}

\end{document}